\theoremstyle{plain}
\newtheorem{theorem}{Theorem}[section]
\newtheorem{lemma}[theorem]{Lemma}
\newtheorem{proposition}[theorem]{Proposition}
\newtheorem{corollary}[theorem]{Corollary}
\theoremstyle{definition}
\newtheorem{defn}{Definition}
\theoremstyle{remark}
\newtheorem{remark}{Remark}
\newcommand{\R}{\mathbb{R}}
\newcommand{\F}{\mathcal{F}}
\newcommand{\U}{\mathcal{U}}
\newcommand{\V}{\mathcal{V}}
\newcommand{\M}{\mathcal{M}}
\newcommand{\N}{\mathcal{N}}
\newcommand{\El}{\mathcal{L}}
\newcommand{\e}{\varepsilon}
\newcommand{\weak}{\rightharpoonup}
\newcommand{\G}{\mathcal{G}}
\newcommand{\Fb}{\F^{\beta}}
\newcommand{\be}{\beta}
\newcommand{\NN}{\mathbb{N}}
\begin{document}
\title[A gradient flow equation for optimal control
problems]
{A gradient flow equation for optimal control
problems with end-point cost}
\author[A. Scagliotti]{A. Scagliotti}

\address[A.~Scagliotti]{Scuola Internazionale Superiore di Studi Avanzati, Trieste, Italy}
\email{ascaglio@sissa.it}

\begin{abstract}
In this paper we consider a control system 
of the form $\dot x = F(x)u$,
linear in the control variable $u$. 
Given a fixed starting point, we study a
finite-horizon optimal control problem, 
where we want to minimize a weighted sum of 
an end-point cost and the 
squared $2$-norm of the control.
This functional induces a gradient flow
on the Hilbert space of admissible controls, 
and we prove a convergence result by means of the
Lojasiewicz-Simon inequality. 
Finally, we show that, if we let
the weight of the end-point cost tend to infinity,
the resulting family of functionals is
$\Gamma$-convergent, and it turns
out that the limiting problem consists in joining
the starting point and a minimizer of the
end-point cost with a horizontal length-minimizer
path. 

\subsection*{Keywords} Gradient flow, optimal control,
end-point cost, Lojasiewicz-Simon inequality,
$\Gamma$-convergence.
\end{abstract}

\maketitle

\subsection*{Acknowledgments} 
The Author acknowledges partial support from
INDAM--GNAMPA.
The Author wants to
thank Prof. A.~Agrachev and Prof. A.~Sarychev
for encouraging and for the helpful discussions.
Finally, the Author is grateful to an 
anonymous referee for the invaluable
comments that contributed to improve the
overall quality of the paper.

\maketitle

\begin{section}{Introduction}
In this paper we consider a control system of the form
\begin{equation} \label{eq:ctrl_sys_intro}
\dot x = F(x)u,
\end{equation}
where $F:\R^n \to \R^{n\times k}$ is a 
Lipschitz-continuous
function, and $u\in \R^k$ is the control variable.
If $k\leq n$, for every $x\in\R^n$
we may think of the columns 
$\{ F^i(x) \}_{i=1,\ldots,k}$ of the matrix $F(x)$
as an ortho-normal frame of vectors, defining
a sub-Riemannian structure on $\R^n$.
For a thorough introduction to the topic, we refer
the reader to the monograph \cite{ABB}.
In our framework, $\U:=L^2([0,1],\R^k)$ will be
the space of admissible controls, equipped with the 
usual Hilbert space structure. 
Given a base-point $x_0\in\R^n$, for every
$u\in\U$ we consider the absolutely
continuous trajectory $x_u:[0,1]\to\R^n$
that solves
\begin{equation} \label{eq:ctrl_cau_intro}
\begin{cases}
\dot x_u(s) = F(x_u(s))u(s) &\mbox{for a.e. }
s\in[0,1], \\
x_u(0)=x_0.
\end{cases}
\end{equation}
For every 
$\be >0$ and $x_0 \in \R^n$,
 we define the the functional
$\Fb:\U\to\R_+$ as follows:
\begin{equation}\label{eq:F_e_intro}
\Fb(u) := \frac12 ||u||_{\U}^2 + \be a(x_u(1)), 
\end{equation} 
where $a:\R^n \to \R_+$ is a non-negative 
$C^1$-regular function,
and $x_u:[0,1]\to \R^n$ is the solution of 
\eqref{eq:ctrl_cau_intro} corresponding
to the control $u\in \U$.
In this paper we want to investigate the gradient
flow induced by the functional $\Fb$ on the Hilbert
space $\U$, i.e., the evolution equation
\begin{equation} \label{eq:grad_flow_intro}
\partial_t U_t = -\G^\be[U_t],
\end{equation}   
where $\G^\be:\U\to\U$ is the vector field
on the Hilbert space $\U$ that represents
the differential $d\Fb:\U\to\U^*$ through the
Riesz's isometry. In other words, for every
$u\in \U$, we denote with 
$d_u\Fb:\U\to\R$ the differential of
$\Fb$ at $u$, and
$\G^\be[u]$ is defined as the only element
of $\U$ such that the identity
\begin{equation} \label{eq:rep_intro}
\langle \G^\be[u],v \rangle_{L^2} = d_u\Fb(v)
\end{equation}
holds for every $v\in \U$. 
In order to avoid confusion,
we use different letters to denote the
time variable in the control system
\eqref{eq:ctrl_cau_intro} and in the
evolution equation \eqref{eq:grad_flow_intro}.
Namely, the variable $s\in[0,1]$ will be exclusively
used for the control system
\eqref{eq:ctrl_cau_intro}, while 
$t\in [0,+\infty)$ will be employed
only for the gradient flow 
\eqref{eq:grad_flow_intro} and the corresponding
trajectories. Moreover, when dealing with
operators taking values in a space of
functions, we express the argument using the
square brackets.

The first part of the paper  is devoted to the
formulation of the gradient flow equation
\eqref{eq:grad_flow_intro}. In particular, we first 
study the differentiability of the functional 
$\Fb:\U\to\R_+$, then we introduce the vector field
$\G^\be:\U\to\U$ as the representation
of its differential, and finally we show that,
under suitable assumptions, 
$\G^\be$ is locally Lipschitz-continuous. 
As a matter of fact, it turns out that 
\eqref{eq:grad_flow_intro} can be treated as 
an infinite-dimensional ODE, and we prove that,
for every initial datum $U_0=u_0$, the gradient
flow equation \eqref{eq:grad_flow_intro} admits
a unique continuously differentiable
solution $U:[0,+\infty)\to\U$.
In the central part of this contribution
we focus on the asymptotic behavior
of the curves that solve
\eqref{eq:grad_flow_intro}.  
The main result states that, if the 
application $F:\R^n\to\R^{n\times k}$ 
that defines the linear-control system
\eqref{eq:ctrl_sys_intro} is real-analytic 
as well as the function
$a:\R^n\to\R_+$ that provides the end-point
term in \eqref{eq:F_e_intro}, then, for every
$u_0\in H^1([0,1],\R^k)\subset \U$, 
the curve $t\mapsto U_t$ that solves
the gradient flow equation
\eqref{eq:grad_flow_intro} with initial
datum $U_0=u_0$ satisfies
\begin{equation} \label{eq:conv_intro}
\lim_{t\to+\infty} ||U_t-u_\infty||_{L^2} =0,
\end{equation} 
where $u_\infty \in \U$ is a critical point for
$\Fb$.
Finally, in the last part of this work we prove 
a $\Gamma$-convergence result concerning the
family of functionals $(\Fb)_{\be\in\R_+}$.
In particular, we show that, when
$\be \to+\infty$, the limiting problem
consists in minimizing the $L^2$-norm of the
controls that steer the initial point
$x_0$ to the set $\{x\in\R^n:a(x) =0\}$. 
This fact can be applied, for example, to
approximate the problem of finding a 
sub-Riemannian length-minimizer curve that 
joins two assigned points.

We report below in detail
the organization of the sections.\\
In Section~\ref{sec:ass_not} we introduce the
linear-control system \eqref{eq:ctrl_sys_intro}
and we establish some preliminary results that
will be used throughout the paper.
In particular, in Subsection~\ref{subsec:1ord}
we focus on the first variation of a trajectory
when a perturbation of the corresponding control
occurs. In Subsection~\ref{subsec:2nd_ord}
we study the second variation of the trajectories
at the final evolution instant. \\
In Section~\ref{sec:well_posed} we prove that,
for every intial datum $u_0\in \U$, the
evolution equation \eqref{eq:grad_flow_intro}
gives a well-defined
Cauchy problem whose solutions exist for  
every $t\geq 0$.
To see that, we use the results obtained in
Subsection~\ref{subsec:1ord} to introduce the 
vector field $\G^\be:\U\to\U$ satisfying
\eqref{eq:rep_intro} and to prove that
it is Lipschitz-continuous when restricted to
the bounded subsets of $\U$. 
Combining this fact with the theory
of ODEs in Banach spaces (see, e.g., \cite{LL72}),
it descends that, for every choice of the
initial datum $U_0=u_0$, the evolution equation 
\eqref{eq:grad_flow_intro} admits a unique and
locally defined solution $U:[0,\alpha)\to\U$,
with $\alpha>0$.
Using the particular structure of the 
gradient flow equation \eqref{eq:grad_flow_intro},
we finally
manage to extend these solutions for every 
positive time. \\
In Section~\ref{sec:comp} we prove that, if the
Cauchy datum $u_0$ has Sobolev regularity (i.e.,
$u_0 \in H^m([0,1],\R^k)\subset \U$ for 
some positive integer $m$), then the curve
$t\mapsto U_t$ that
solves \eqref{eq:grad_flow_intro} 
and satisfies $U_0=u_0$ is pre-compact in $\U$.
The key-observation lies in the fact that,
under suitable regularity assumptions on
$F:\R^n\to\R^{n\times k}$ and
$a:\R^n\to\R_+$,
the Sobolev space $H^m([0,1],\R^k)$ is invariant
for the gradient flow \eqref{eq:grad_flow_intro}.
Moreover, we obtain that, when the Cauchy
datum belongs to $H^m([0,1],\R^k)$, the curve 
$t\mapsto U_t$
that solves \eqref{eq:grad_flow_intro}
is bounded in the $H^m$-norm. \\
In Section~\ref{sec:Loj_Sim} we establish the 
Lojasiewicz-Simon inequality for the functional
$\Fb:\U\to\R_+$, under the assumption that
$F:\R^n\to\R^{n\times k}$ and $a:\R^n\to\R_+$
are real-analytic.
This family of inequalities was first 
introduced by Lojasiewicz
in \cite{L63} for real-analytic
functions defined on a finite-dimensional domain.
The generalization of this result to 
real-analytic functionals defined on a Hilbert  
space was proposed by Simon in \cite{S83}, and
since then it has revealed to be an invaluable
tool to study convergence properties of
evolution equations (see the survey paper
\cite{C03}). Following this approach, the
Lojasiewicz-Simon inequality for the 
functional $\Fb$ is the cornerstone for the
convergence result of the subsequent section.\\    
In Section~\ref{sec:conv} we prove that,
if the Cauchy datum belongs to
$H^m([0,1],\R^k)$ for an integer $m\geq1$, 
the corresponding gradient flow trajectory
converges to a critical point of $\Fb$.
This result requires that both 
$F:\R^n\to\R^{n\times k}$ and $a:\R^n\to\R_+$
are real-analytic. Indeed, we use the 
Lojasiewicz-Simon inequality for
$\Fb:\U\to\R_+$ to show that the solutions
of \eqref{eq:grad_flow_intro} with
Sobolev-regular initial datum  have
finite length. This fact immediately yield 
\eqref{eq:conv_intro}.\\
In Section~\ref{sec:Gamma} we 
study the behavior of the minimization problem
\eqref{eq:F_e_intro} when the
positive parameter $\beta$ tends to infinity. 
We address this problem using the tools
of the $\Gamma$-convergence (see
\cite{D93} for a complete introduction to the
subject). 
In particular, we consider $\U_\rho:=\{ u\in\U:
||u||_{L^2}\leq \rho \}$ and we equip 
it with the topology of the weak convergence of 
$\U$. For every $\be>0$, we introduce the 
restrictions $\Fb_\rho:= \Fb|_{\U_\rho}$, and we
show that there exists a
functional $\F_\rho:\U_\rho\to\R_+\cup\{+\infty\}$ 
such that the family $(\Fb_\rho)_{\be\in\R_+}$
$\Gamma$-converges to $\F_\rho$ as 
$\beta\to+\infty$. 
In the case $a:\R^n\to\R_+$ admits a unique 
point $x_1\in \R^n$ such that $a(x_1)=0$,
then the limiting problem of minimizing
the functional $\F_\rho$ consists in 
finding (if it exists) 
a control $u\in\U_\rho$
with minimal $L^2$-norm
such that the corresponding curve
$x_u:[0,1]\to\R^n$ defined by
\eqref{eq:ctrl_cau_intro} satisfies
$x_u(1)=x_1$.
The final result of Section~\ref{sec:Gamma}
guarantees that the minimizers of
$\Fb_\rho$ provide $L^2$-strong
approximations of the minimizers of 
$\F_\rho$. 

\begin{subsection}{General notations}
Let us briefly introduce some basic notations
that will be used throughout the paper. 
For every $d\geq1$, we equip
the space $\R^d$ with the standard Euclidean norm
$|\cdot|_2$, i.e.,
$ |z|_2=\sqrt{\langle z,z\rangle}_{\R^d}$
for every $z\in \R^d$,
induced by the standard scalar product
\[
\langle z, w\rangle_{\R^d} := \sum_{i=1}^d z^i w^i
\]
for every $z,w\in \R^d$.
We shall often use the relation
\begin{equation} \label{eq:norm_equiv_1_2}
\frac1{\sqrt d}|z|_2\leq |z|_1 \leq \sqrt d |z|_2
\end{equation}
for every $z\in \R^d$, where 
$|z|_1:= \sum_{i=1}^d|z^i|$.
For every $d\geq 1$, if $M\in \R^{d\times d}$ is
an endomorphism of $\R^d$, we define
\begin{equation} \label{eq:norm_mat_def}
|M|_2 := \sup_{z\neq 0}\frac{|Mz|_2}{|z|_2}.
\end{equation}
We recall that in any finite-dimensional
vector space, all the norms are equivalent. 
In particular, if $|\cdot|_A, |\cdot|_B$ are
norms in $\R^{d\times d}$, then there exists
$C\geq 1$ such that
\begin{equation} \label{eq:norm_mat_equiv}
\frac1C |M|_A \leq |M|_B \leq C|M|_A
\end{equation}
for every $M\in \R^{d\times d}$. 
\end{subsection}

\end{section}

\begin{section}{Framework and preliminary results}
\label{sec:ass_not}

In this paper we consider control systems on $\R^n$
with linear dependence in the control variable
$u\in \R^k$, i.e.,
of the form   
\begin{equation} \label{eq:ctrl_sys}
\dot x = F(x)u,
\end{equation}
where $F:\R^n \to \R^{n\times k}$ is a 
Lipschitz-continuous function. 
We use the notation $F^i$ for $i=1,\ldots,k$ to
indicate the vector fields on $\R^n$ obtained by
taking the columns of $F$, and
we denote by $L>0$ the Lipschitz constant of 
these vector fields, i.e., we set
\begin{equation}\label{eq:Lipsch_const_F}
L:= \sup_{i=1,\ldots,k} \, \sup_{x,y\in \R^n}
\frac{|F^i(x)-F^i(y)|_2}{|x-y|_2}.
\end{equation}
We immediately observe that 
\eqref{eq:Lipsch_const_F} implies that 
the vector fields $F^1,\ldots,F^k$ have sub-linear
growth, i.e., there exists $C>0$ such that
\begin{equation} \label{eq:sub_lin}
\sup_{i=1,\ldots,k}|F^i(x)| \leq C(|x|_2+1)
\end{equation}
for every $x\in \R^n$. Moreover, for every
$i=1,\ldots,k$, if $F^i$ is differentiable at 
$y\in \R^n$, then from \eqref{eq:Lipsch_const_F}
we deduce that
\begin{equation} \label{eq:bound_jac_F}
\left|\frac{\partial F^i(y)}{\partial x}
\right|_2 
\leq L.
\end{equation}
We define $\U := L^2([0,1],\R^k)$
as the space of admissible controls, 
and we endow 
$\U$ with the usual Hilbert space structure, 
induced by the scalar product
\begin{equation} \label{eq:scal_prod}
\langle u,v \rangle_{L^2}
= \int_0^1 \langle u(s),v(s) \rangle_{\R^k}
\, ds.
\end{equation}
Given $x_0 \in \R^n$, for every $u\in\U$,
let 
$x_u:[0,1]\to \R^n$ be 
the absolutely continuous curve that solves 
the following Cauchy problem:
\begin{equation}\label{eq:ctrl_Cau}
\begin{cases}
\dot x_u(s) = F(x_u(s))u(s)& 
\mbox{for a.e. }s\in[0,1], \\
x_u(0) = x_0.
\end{cases}
\end{equation}
We recall that, under the condition
\eqref{eq:Lipsch_const_F},
the existence and uniqueness
of the solution of \eqref{eq:ctrl_Cau}
is guaranteed by Carath\'eodory Theorem
(see, e.g, \cite[Theorem 5.3]{H80}). 
We insist on the fact that
in this paper the Cauchy datum $x_0\in \R^n$
is assumed to be assigned.

In the remainder of this section we 
introduce auxiliary results
that will be useful in the other sections.
In Subsection~\ref{subsec:sobolev}
we recall some results concerning
Sobolev spaces in one-dimensional domains.
In Subsection~\ref{subsec:1ord} and
Subsection~\ref{subsec:2nd_ord} we investigate
the properties of the solutions of 
\eqref{eq:ctrl_Cau}.

\begin{subsection}{Sobolev spaces in one dimension} \label{subsec:sobolev}
In this subsection we recall some results
for one-dimensional Sobolev spaces. 
Since in this paper we work only in Hilbert spaces,
we shall restrict our attention to 
the Sobolev exponent $p=2$, i.e., we 
shall state the results for the Sobolev
spaces $H^m:=W^{m,2}$ with $m\geq 1$.
For a complete discussion on the topic, the
reader is referred to \cite[Chapter~8]{B11}.
Throughout the paper we use the convention 
$H^0 : =L^2$. 

For every integer $d\geq 1$,
given a compact interval $[a,b]\subset \R$,
let $C^\infty_c([a,b],\R^d)$ be the set of
the $C^\infty$-regular functions with compact
support in $[a,b]$. For every 
$\phi \in C^\infty_c([a,b],\R^d)$, we use the
symbol $\phi^{(\ell)}$ to denote the 
$\ell$-th derivative of
the function $\phi:[a,b]\to\R^d$.
For every $m\geq 1$, the function 
$u\in L^2([a,b],\R^d)$ belongs to the Sobolev
space $H^m([a,b],\R^d)$ if and only if, for every
integer $1\leq \ell\leq m$ there exists
$u^{(\ell)}\in L^2([a,b],\R^d)$ such that
the following identity holds
\[
\int_a^b \langle u(s), \phi^{(\ell)}(s)
\rangle_{\R^d}\, ds =
(-1)^\ell \int_a^b 
\langle
u^{(\ell)}(s), \phi(s)
\rangle_{\R^d}\, ds
\]
for every $\phi \in C^\infty_c([a,b],\R^d)$.
If  $u\in H^m([a,b],\R^d)$, 
then for every integer $1\leq \ell\leq m$
$u^{(\ell)}$ denotes the $\ell$-th Sobolev 
derivative of  $u$.
We recall that, for every $m\geq 1$, 
$H^m([a,b],\R^d)$ is a Hilbert space
(see, e.g., \cite[Proposition~8.1]{B11})
when it is equipped with
the norm $||\cdot||_{H^m}$
induced by the scalar product
\[
\langle u,v \rangle_{H^m} := \langle u, v \rangle_{L^2} +
\sum_{\ell = 1}^m \int_a^b \langle
u^{(\ell)}(s), v^{(\ell)}(s) \rangle_{\R^d}\,
ds.
\]
We observe that, for every $m_2> m_1\geq 0$, we 
have
\begin{equation} \label{eq:sob_ineq_triv}
||u||_{H^{m_1}} \leq  ||u||_{H^{m_2}}
\end{equation}
for every $u \in H^{m_2}([a,b],\R^l)$, i.e., the
inclusion $H^{m_2}([a,b],\R^d)  \hookrightarrow
H^{m_1}([a,b],\R^d)$ is continuous.
We recall that a linear and
continuous application $T: E_1 \to E_2$
between two Banach spaces $E_1, E_2$
is {\it compact} if, for every bounded set
$B\subset E_1$, the image $T(B)$
is pre-compact with respect to the strong
topology of $E_2$.  
In the following result we list three 
compact inclusions.

\begin{theorem} \label{thm:comp_sob_imm}
For every $m\geq 1$, the following inclusions
are compact:
\begin{equation} \label{eq:comp_sob_L2}
H^m([a,b],\R^d)  \hookrightarrow L^2([a,b],\R^d),
\end{equation}
\begin{equation} \label{eq:comp_sob_C0}
H^m([a,b],\R^d)  \hookrightarrow C^0([a,b],\R^d),
\end{equation}
\begin{equation} \label{eq:comp_sob_sob}
H^m([a,b],\R^d)  \hookrightarrow 
H^{m-1}([a,b],\R^d),
\end{equation}
\end{theorem}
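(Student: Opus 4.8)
\noindent\textit{Proof proposal.} The plan is to reduce all three statements to a single base case — the compactness of $H^1([a,b],\R^d)\hookrightarrow C^0([a,b],\R^d)$ — and then to bootstrap. First I would recall from \cite[Chapter~8]{B11} that every $u\in H^1([a,b],\R^d)$ admits an absolutely continuous representative satisfying $u(s)-u(t)=\int_t^s u^{(1)}(\tau)\,d\tau$ for all $s,t\in[a,b]$, and that the inclusion $H^1([a,b],\R^d)\hookrightarrow C^0([a,b],\R^d)$ is continuous, i.e. $||u||_{C^0}\leq c\,||u||_{H^1}$ for a constant $c=c(b-a)$.

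For the base case, let $B\subset H^1([a,b],\R^d)$ be bounded, say $||u||_{H^1}\leq M$ for every $u\in B$. The continuity of the embedding gives the uniform bound $||u||_{C^0}\leq cM$, while the Cauchy--Schwarz inequality applied to the integral representation yields $|u(s)-u(t)|_2\leq ||u^{(1)}||_{L^2}\,|s-t|^{1/2}\leq M\,|s-t|^{1/2}$ for all $u\in B$ and all $s,t\in[a,b]$; hence $B$ is uniformly bounded and equi-H\"older-continuous, in particular equicontinuous. By the Arzel\`a--Ascoli theorem $B$ is pre-compact in $C^0([a,b],\R^d)$, proving that \eqref{eq:comp_sob_C0} is compact when $m=1$. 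For general $m\geq1$, the inclusion $H^m\hookrightarrow C^0$ factors as $H^m\hookrightarrow H^1\hookrightarrow C^0$, where the first arrow is continuous by \eqref{eq:sob_ineq_triv}; since the composition of a continuous linear map with a compact one is compact, \eqref{eq:comp_sob_C0} holds for all $m$. Likewise, as $[a,b]$ is bounded, the inclusion $C^0([a,b],\R^d)\hookrightarrow L^2([a,b],\R^d)$ is continuous, and composing it with \eqref{eq:comp_sob_C0} shows that \eqref{eq:comp_sob_L2} is compact as well.

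It remains to treat \eqref{eq:comp_sob_sob}. Let $(u_n)_n\subset H^m([a,b],\R^d)$ be bounded; then for each $\ell\in\{0,1,\ldots,m-1\}$ the sequence $(u_n^{(\ell)})_n$ of Sobolev derivatives is bounded in $H^1$, since $||u_n^{(\ell)}||_{H^1}^2=||u_n^{(\ell)}||_{L^2}^2+||u_n^{(\ell+1)}||_{L^2}^2\leq||u_n||_{H^m}^2$ (with the convention $u_n^{(0)}=u_n$). Applying the already established compactness of $H^1\hookrightarrow L^2$ successively for $\ell=0,1,\ldots,m-1$ and extracting nested subsequences — a finite procedure — I obtain a single subsequence, still denoted $(u_n)_n$, and functions $v_0,\ldots,v_{m-1}\in L^2([a,b],\R^d)$ with $u_n^{(\ell)}\to v_\ell$ in $L^2$ for every $\ell\leq m-1$. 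The one step that requires a little care is to check that $v_0\in H^{m-1}$ with $v_0^{(\ell)}=v_\ell$: this follows by passing to the limit in the identity $\int_a^b\langle u_n(s),\phi^{(\ell)}(s)\rangle_{\R^d}\,ds=(-1)^\ell\int_a^b\langle u_n^{(\ell)}(s),\phi(s)\rangle_{\R^d}\,ds$, valid for every $\phi\in C^\infty_c([a,b],\R^d)$, using the $L^2$-convergence on both sides. Consequently $||u_n-v_0||_{H^{m-1}}^2=\sum_{\ell=0}^{m-1}||u_n^{(\ell)}-v_\ell||_{L^2}^2\to0$, so the subsequence converges strongly in $H^{m-1}$, which proves that \eqref{eq:comp_sob_sob} is compact.

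I do not expect a serious obstacle: this is the one-dimensional Rellich--Kondrachov theorem, whose engine is the $|s-t|^{1/2}$ modulus of continuity produced by Cauchy--Schwarz combined with Arzel\`a--Ascoli. The only mildly delicate point is the closedness argument identifying the $L^2$-limits of the derivatives with the Sobolev derivatives of the limit, but this is immediate from the integration-by-parts characterization of $H^m$ recalled above.
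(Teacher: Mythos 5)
Your proof is correct and follows the same global strategy as the paper: reduce everything to the compactness of a single base embedding and then bootstrap by composing with continuous inclusions, exactly as in the paper's proof. The differences are in how much you supply yourself. You prove the base case $H^1([a,b],\R^d)\hookrightarrow C^0([a,b],\R^d)$ from scratch via the Cauchy--Schwarz $|s-t|^{1/2}$ H\"older modulus plus Arzel\`a--Ascoli, whereas the paper simply cites \cite[Theorem~8.8]{B11} for this; and for \eqref{eq:comp_sob_sob} you replace the paper's one-line induction on $m$ (based on the remark that $u\in H^m$ implies $u^{(1)}\in H^{m-1}$) with a direct finite nested-subsequence extraction applied to each $u_n^{(\ell)}$, $\ell=0,\ldots,m-1$, followed by an explicit verification, via passing to the limit in the integration-by-parts identity defining the Sobolev derivative, that the $L^2$-limits $v_\ell$ of the derivatives are indeed the Sobolev derivatives of $v_0$. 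That last closedness check is the one point the paper leaves implicit, and it is worth spelling out as you do; everything in your version goes through. In short, your route is more self-contained and slightly more detailed, while the paper's is shorter by delegating the Rellich--Kondrachov engine to the cited reference; the two are otherwise the same argument.
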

\begin{proof}
When $m=1$, 
\eqref{eq:comp_sob_L2}-\eqref{eq:comp_sob_C0}
descend directly from \cite[Theorem~8.8]{B11}.
In the case $m\geq 2$, we observe that, in 
virtue of \eqref{eq:sob_ineq_triv}, the immersion
$H^m([a,b],\R^d)\hookrightarrow H^1([a,b],\R^d)$
is continuous. Recalling that the composition
of a linear continuous operator with a 
linear compact one is still compact (see, e.g.,
\cite[Proposition~6.3]{B11}),
we deduce that \eqref{eq:comp_sob_L2}-\eqref{eq:comp_sob_C0}
holds also for $m\geq 2$.

When $m=1$, \eqref{eq:comp_sob_sob}
reduces to \eqref{eq:comp_sob_L2}.
For $m\geq2$, \eqref{eq:comp_sob_sob} is proved
by induction on $m$, using \eqref{eq:comp_sob_L2}
and observing that $u\in H^m([a,b],\R^d)$
implies that $u^{(1)}\in H^{m-1}([a,b],\R^d)$.
\end{proof}

Finally, we recall the notion of {\it weak
convergence}. For every $m\geq 0$ 
(we set $H^0:=L^2$),
if $(u_n)_{n\geq1}$ is a sequence in 
$H^m([0,1],\R^d)$ and $u\in H^m([0,1],\R^d)$, then
the sequence $(u_n)_{n\geq1}$ weakly converges
to $u$ if and only if
\begin{equation*}
\lim_{n\to\infty}\langle v,u_n\rangle_{H^m} = 
\langle v,u\rangle_{H^m} 
\end{equation*}
for every $v\in H^m([0,1],\R^d)$, and we write
$u_n\weak_{H^m} u$ as $n\to\infty$.
Finally, in view of the compact inclusion 
\eqref{eq:comp_sob_sob} 
and of \cite[Remark~6.2]{B11}, for every $m\geq 1$,
if a sequence 
$(u_n)_{n\geq1}$ in $H^m([0,1],\R^d)$ 
satisfies $u_n\weak_{H^m}u$ as $n\to \infty$,
then
\begin{equation*}
\lim_{n\to\infty} ||u_n-u||_{H^{m-1}}=0.
\end{equation*}

\end{subsection}

\begin{subsection}{General properties of
the linear-control system \eqref{eq:ctrl_sys}} 
\label{subsec:1ord}
In this subsection we investigate 
basic properties of  the solutions of 
\eqref{eq:ctrl_Cau}, with a particular
focus on the relation between the
admissible control $u\in\U$ and the
corresponding trajectory $x_u$.
We start by stating a 
version of the Gr\"onwall-Bellman inequality, that
will be widely used later.
We recall that this kind of inequalities 
plays an important role in the study
of integral and differential equations.
For a complete survey on the topic, the reader
is referred to the textbook \cite{P98}.

\begin{lemma}[Gr\"onwall-Bellman Inequality] 
\label{lem:Gron}
Let $f:[a,b]\to\R_+$ be a
non-negative continuous function
and let us assume that there exists
a constant $\alpha>0$ and a non-negative 
function $\beta\in L^1([a,b],\R_+)$
such that
\[
f(s) \leq \alpha + \int_a^s\beta(\tau)f(\tau) \,d\tau
\]
for every $s\in[a,b]$. Then, for every 
$s\in[a,b]$ the following inequality holds:
\begin{equation} \label{eq:Gron_ineq}
f(s) \leq \alpha e^{||\beta||_{L^1}}.
\end{equation}
\end{lemma}
\begin{proof}
This statement follows as a particular case
of \cite[Theorem~5.1]{EK86}. 
\end{proof}
We recall that, for every 
$u\in \U :=L^2([0,1],\R^k)$ the following
inequality holds:
\begin{equation} \label{eq:norm_ineq}
||u||_{L^1} = \int_0^1 \sum_{i=1}^k|u^i(s)| \,ds
\leq \sqrt k
\sqrt{\int_0^1 \sum_{i=1}^k|u^i(s)|^2 \,ds}
= \sqrt k ||u||_{L^2}.
\end{equation}

We first show that, for every admissible 
control $u\in \U$, the corresponding 
solution of \eqref{eq:ctrl_Cau}
is bounded in the $C^0$-norm.
In our framework, given a 
continuous function $f:[0,1]\to\R^n$, we set
\begin{equation*}
||f||_{C^0}:= \sup_{s\in[0,1]}|f(s)|_2.
\end{equation*}

\begin{lemma} \label{lem:C0_bound_traj}
Let $u\in \U$ be an admissible control, and let
$x_u:[0,1]\to\R^n$ be the
solution of the
Cauchy problem \eqref{eq:ctrl_Cau}
corresponding to the control $u$.
Then, the following inequality holds:
\begin{equation}\label{eq:est_norm_traj}
||x_u||_{C^0} \leq 
\left( |x_0|_2 + \sqrt k C||u||_{L^2} 
\right) e^{\sqrt k C||u||_{L^2}},
\end{equation}
where $C>0$ is the constant of sub-linear growth
prescribed by \eqref{eq:sub_lin}.
\end{lemma}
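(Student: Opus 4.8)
The plan is to derive \eqref{eq:est_norm_traj} by a direct application of the Gr\"onwall--Bellman inequality (Lemma~\ref{lem:Gron}) to the function $f(s) := |x_u(s)|_2$. First I would integrate the Cauchy problem \eqref{eq:ctrl_Cau}, writing
\[
x_u(s) = x_0 + \int_0^s F(x_u(\tau))u(\tau)\,d\tau
\]
for every $s\in[0,1]$, which is legitimate since $x_u$ is absolutely continuous. Taking the Euclidean norm and using the triangle inequality for integrals gives
\[
|x_u(s)|_2 \leq |x_0|_2 + \int_0^s |F(x_u(\tau))u(\tau)|_2 \, d\tau.
\]

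Next I would bound the integrand. Writing $F(x)u = \sum_{i=1}^k u^i F^i(x)$ and using the sub-linear growth estimate \eqref{eq:sub_lin}, one has
\[
|F(x_u(\tau))u(\tau)|_2 \leq \sum_{i=1}^k |u^i(\tau)|\,|F^i(x_u(\tau))|_2 \leq C\big(|x_u(\tau)|_2 + 1\big)\sum_{i=1}^k |u^i(\tau)|.
\]
Setting $\beta(\tau) := C\sum_{i=1}^k |u^i(\tau)| = C|u(\tau)|_1$, which belongs to $L^1([0,1],\R_+)$ with $\|\beta\|_{L^1} = C\|u\|_{L^1} \leq \sqrt{k}\,C\|u\|_{L^2}$ by \eqref{eq:norm_ineq}, we obtain
\[
|x_u(s)|_2 \leq |x_0|_2 + \int_0^s \beta(\tau)\,d\tau + \int_0^s \beta(\tau)|x_u(\tau)|_2\,d\tau \leq \Big(|x_0|_2 + \sqrt{k}\,C\|u\|_{L^2}\Big) + \int_0^s \beta(\tau)|x_u(\tau)|_2\,d\tau,
\]
where in the last step I have bounded $\int_0^s\beta(\tau)\,d\tau \leq \|\beta\|_{L^1} \leq \sqrt{k}\,C\|u\|_{L^2}$. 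This is exactly the hypothesis of Lemma~\ref{lem:Gron} with $\alpha = |x_0|_2 + \sqrt{k}\,C\|u\|_{L^2}$ and the weight $\beta$ just defined.

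Applying \eqref{eq:Gron_ineq} then yields, for every $s\in[0,1]$,
\[
|x_u(s)|_2 \leq \Big(|x_0|_2 + \sqrt{k}\,C\|u\|_{L^2}\Big)\,e^{\|\beta\|_{L^1}} \leq \Big(|x_0|_2 + \sqrt{k}\,C\|u\|_{L^2}\Big)\,e^{\sqrt{k}\,C\|u\|_{L^2}},
\]
and taking the supremum over $s\in[0,1]$ gives \eqref{eq:est_norm_traj}. The argument is essentially routine; the only point requiring a little care is the bookkeeping of the constants — in particular, absorbing the term $\int_0^s\beta\,d\tau$ into the additive constant $\alpha$ so that the Gr\"onwall hypothesis is met in the precise form stated in Lemma~\ref{lem:Gron}, and converting the $L^1$-norm of the control to its $L^2$-norm via \eqref{eq:norm_ineq}. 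There is no substantial obstacle here; the lemma is a preparatory estimate whose role is to feed into the later compactness and differentiability arguments.
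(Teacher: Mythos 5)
Your proof is correct and follows essentially the same argument as the paper's: write the integral equation, bound the integrand via the sub-linear growth estimate \eqref{eq:sub_lin}, apply Gr\"onwall--Bellman with weight $C|u(\tau)|_1$, and convert $\|u\|_{L^1}$ to $\sqrt{k}\,\|u\|_{L^2}$ via \eqref{eq:norm_ineq}. The only cosmetic difference is that you perform the $L^1$-to-$L^2$ conversion before applying Gr\"onwall rather than after, which changes nothing of substance.
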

\begin{proof}
Rewriting \eqref{eq:ctrl_Cau} in the integral form,
we obtain the following inequality
\[
|x_u(s)|_2 
\leq |x_0|_2 + \int_0^s
\sum_{i=1}^k \Big(|F^i(x_u(\tau))|_2 
|u^i(\tau)|\Big) \,d\tau
\]
for every $s\in[0,1]$.
Then, using \eqref{eq:sub_lin}, we deduce that
\[
|x_u(s)|_2 
\leq |x_0|_2 + C||u||_{L^1} 
+ C\int_0^s
|u(\tau)|_1 |x_u(\tau)|_2 \,d\tau.
\]
Finally, the thesis follows 
from Lemma~\ref{lem:Gron}
 and \eqref{eq:norm_ineq}. 
\end{proof}

In the following proposition we prove that the solution of the Cauchy
problem \eqref{eq:ctrl_Cau}
has a continuous dependence
on the admissible control.

\begin{proposition} \label{prop:cont_dep_traj}
Let us consider $u,v\in \U$ and let
$x_u, x_{u+v}:[0,1]\to\R^n$ be the
solutions of the
Cauchy problem \eqref{eq:ctrl_Cau}
corresponding, respectively,
to the controls $u$ and
$u+v$. Then, for every $R>0$ there exists
$L_R>0$ such that the inequality 
\begin{equation}\label{eq:cont_dep_traj}
||x_{u+ v} - x_{u}||_{C^0}\leq L_{R}
 ||v||_{L^2}
\end{equation}
holds
for every $u,v\in\U$ such that 
$||u||_{L^2},||v||_{L^2}\leq R$.
\end{proposition}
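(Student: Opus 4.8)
The plan is to compare the two trajectories through their integral formulations and close a Grönwall-type estimate. First I would observe that, since $\|u\|_{L^2}\le R$ and $\|u+v\|_{L^2}\le\|u\|_{L^2}+\|v\|_{L^2}\le 2R$, Lemma~\ref{lem:C0_bound_traj} (applied to the controls $u$ and $u+v$, the latter of $L^2$-norm at most $2R$) provides a single constant $\rho_R>0$, depending only on $R$, $x_0$, $k$ and the sublinear-growth constant $C$, such that $\|x_u\|_{C^0}\le\rho_R$ and $\|x_{u+v}\|_{C^0}\le\rho_R$. Confining both curves to a common compact ball is what makes the Lipschitz bound \eqref{eq:Lipsch_const_F} and the growth bound \eqref{eq:sub_lin} usable with uniform constants.

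Next I would subtract the integral identities for $x_{u+v}$ and $x_u$ and insert the telescoping splitting
\[
F(x_{u+v}(\tau))(u+v)(\tau)-F(x_u(\tau))u(\tau)=\big(F(x_{u+v}(\tau))-F(x_u(\tau))\big)(u+v)(\tau)+F(x_u(\tau))v(\tau).
\]
Estimating column by column and using \eqref{eq:Lipsch_const_F}, the first summand is bounded by $L\,|x_{u+v}(\tau)-x_u(\tau)|_2\,|(u+v)(\tau)|_1$; using \eqref{eq:sub_lin} together with $\|x_u\|_{C^0}\le\rho_R$, the second is bounded by $C(\rho_R+1)\,|v(\tau)|_1$. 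Integrating over $[0,s]$ and passing from $L^1$ to $L^2$ norms of $v$ via \eqref{eq:norm_ineq}, I obtain
\[
|x_{u+v}(s)-x_u(s)|_2\le C(\rho_R+1)\sqrt{k}\,\|v\|_{L^2}+L\int_0^s|(u+v)(\tau)|_1\,|x_{u+v}(\tau)-x_u(\tau)|_2\,d\tau
\]
for every $s\in[0,1]$.

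Finally I would apply the Grönwall–Bellman inequality (Lemma~\ref{lem:Gron}) with $\alpha=C(\rho_R+1)\sqrt{k}\,\|v\|_{L^2}$ and $\beta(\tau)=L\,|(u+v)(\tau)|_1$, noting that $\|\beta\|_{L^1}=L\|u+v\|_{L^1}\le L\sqrt{k}\,\|u+v\|_{L^2}\le 2LR\sqrt{k}$ by \eqref{eq:norm_ineq}. This yields $\|x_{u+v}-x_u\|_{C^0}\le C(\rho_R+1)\sqrt{k}\,e^{2LR\sqrt{k}}\,\|v\|_{L^2}$, so the statement holds with $L_R:=C(\rho_R+1)\sqrt{k}\,e^{2LR\sqrt{k}}$. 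There is no genuine obstacle here; the only points requiring care are the two distinct uses of the $L^1$–$L^2$ comparison (once for the source term $C(\rho_R+1)\|v\|_{L^1}$, once for the Grönwall weight $L\|u+v\|_{L^1}$) and extracting $\rho_R$ for radius $2R$ rather than $R$, since $x_{u+v}$ is driven by a control whose norm may reach $2R$.
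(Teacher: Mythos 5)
Your proof is correct and follows essentially the same route as the paper: write the trajectories in integral form, telescope the difference $F(x_{u+v})(u+v)-F(x_u)u$, bound one term by the sub-linear growth and the other by the Lipschitz condition, and close with the Gr\"onwall--Bellman inequality together with the $L^1$--$L^2$ comparison. The only cosmetic difference is that you pair $v$ with $F(x_u)$ and keep $u+v$ in the Lipschitz increment (so the Gr\"onwall weight is $L|(u+v)(\tau)|_1$ with exponent $\le 2LR\sqrt{k}$), while the paper pairs $v$ with $F(x_{u+v})$ and keeps $u$ in the Lipschitz increment (weight $L|u(\tau)|_1$, exponent $\le LR\sqrt{k}$); either split yields a constant $L_R$ depending only on $R$.
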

\begin{proof}
Using the fact that $x_u$ and $x_{u+ v}$ are
solutions of \eqref{eq:ctrl_Cau},
for every $s\in[0,1]$
we have that
\begin{align*}
|x_{u+ v}(s)-x_u(s)|_2 &\leq
 \int_0^s 
\sum_{i=1}^k
\Big( | F^i(x_{u+ v}(\tau))|_2 |v^i(\tau)|
\Big)\,d\tau \\
& \quad
+ \int_0^s \sum_{i=1}^k\Big(|F^i(x_{u+ v}(\tau))-F^i(x_{u}(\tau)|_2 |u^i(\tau)|\Big)
\,d\tau.
\end{align*}
Recalling that $||v||_{L^2}\leq R$, in virtue of
Lemma~\ref{lem:C0_bound_traj},
we obtain that there exists
$ C_{R}>0$ such that
\[
\sup_{\tau\in[0,1]} \, \sup_{i=1,\ldots,k} 
| F^i(x_{u+ v}(\tau))|_2 \leq  C_{R}.
\]
Hence, using \eqref{eq:norm_ineq}, we deduce that
\begin{equation} \label{eq:cont_dep_p1}
\int_0^s 
\sum_{i=1}^k
\Big( | F^i(x_{u+ v}(\tau))|_2 |v^i(\tau)|
\Big)\,d\tau \leq 
 C_{R} \sqrt{k} ||v||_{L^2}.
\end{equation}
On the other hand, from the Lipschitz-continuity
condition \eqref{eq:Lipsch_const_F}
 it follows that
\begin{equation} \label{eq:cont_dep_p2}
|F^i(x_{u+ v}(\tau))-F^i(x_{u}(\tau)|_2
 \leq 
L  |x_{u+ v}(\tau)-x_u(\tau)|_2
\end{equation}
for every $i=1,\ldots,k$ and for every 
$\tau\in[0,1]$.
Using \eqref{eq:cont_dep_p1} and 
\eqref{eq:cont_dep_p2}, we deduce that
\begin{equation} \label{eq:cont_dep_p3}
|x_{u+ v}(s)-x_u(s)|_2  \leq
 C_{R}\sqrt k ||v||_{L^2}
+ L \int_0^s |u(\tau)|_1|x_{u+ v}(\tau)-x_u(\tau)|_2 \,d\tau,
\end{equation}
for every $s\in[0,1]$.
By applying Lemma~\ref{lem:Gron}
to \eqref{eq:cont_dep_p3}, we obtain that
\begin{equation*}
|x_{u+ v}(s)-x_u(s)|_2
\leq  e^{L||u||_{L^1}}
 C_{R}\sqrt k ||v||_{L^2},
\end{equation*}
for every $s\in[0,1]$.
Recalling \eqref{eq:norm_ineq} and setting
\[
L_{R}:= e^{L\sqrt k R}
 C_{R} \sqrt k,
\]
we prove \eqref{eq:cont_dep_traj}.
\end{proof}
The previous result shows that the map
$u\mapsto x_u$ is Lipschitz-continuous
when restricted to any bounded set of
the space of admissible controls $\U$.
We remark that Proposition~\ref{prop:cont_dep_traj}
holds under the sole assumption that
the controlled vector fields 
$F^1,\ldots,F^k:\R^n\to\R^n$
are Lipschitz-continuous.
In the next result, by
requiring that the controlled vector
fields are $C^1$-regular,
we compute the first order
variation of the solution
of \eqref{eq:ctrl_Cau}
resulting from a perturbation in the control.

\begin{proposition} \label{prop:diff_endpoint}
Let us assume that
the vector fields $F^1,\ldots,F^k$
defining the control system \eqref{eq:ctrl_Cau} are 
$C^1$-regular.
For every $u,v \in \U$, for every $\e\in(0,1]$,
let $x_u,x_{u+\e v}:[0,1]\to\R^n$ be the
solutions of \eqref{eq:ctrl_Cau} corresponding,
respectively, to the admissible controls
$u$ and $u+\e v$.
Then, we have that
\begin{equation} \label{eq:app_traj_1ord}
||x_{u+\e v} - x_u - \e y^v_u||_{C^0} = 
o(\e)
\mbox{ as } \e\to0,
\end{equation}
where $y_u^v:[0,1]\to \R^n$ is the
solution of the following affine system:
\begin{equation} \label{eq:var_1ord}
\dot y_u^v(s) = F(x_u(s))v(s)+
\left( \sum_{i=1}^k u^i(s)
\frac{\partial F^i(x_u(s))}{\partial x} 
\right) y^v_u(s)
\end{equation}
for a.e. $s\in[0,1]$, and
with $y^v_u(0)=0$.
\end{proposition}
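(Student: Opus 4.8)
The plan is to set $z_\e := x_{u+\e v} - x_u - \e y_u^v$ and show that $\|z_\e\|_{C^0} = o(\e)$ by writing an integral equation for $z_\e$, estimating the inhomogeneous term, and closing the argument with the Gr\"onwall--Bellman inequality (Lemma~\ref{lem:Gron}). First I would record that, by Proposition~\ref{prop:cont_dep_traj} applied with the controls $u$ and $u+\e v$, we have $\|x_{u+\e v} - x_u\|_{C^0} \leq L_R \e \|v\|_{L^2}$ for a suitable $R$ (depending on $\|u\|_{L^2}$ and $\|v\|_{L^2}$), so in particular the perturbed trajectories stay in a fixed compact set $K\subset\R^n$ for all $\e\in(0,1]$. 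On $K$ the $C^1$ vector fields $F^i$ have bounded, uniformly continuous Jacobians; this uniform continuity will be what upgrades the naive $O(\e)$ bound to $o(\e)$.

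Next I would subtract the integral forms of the three equations. Writing $x_{u+\e v}(s) - x_u(s) = \int_0^s\big( F(x_{u+\e v})(u+\e v) - F(x_u)u\big)\,d\tau$ and $\e y_u^v(s) = \int_0^s\big( \e F(x_u)v + \sum_i u^i \frac{\partial F^i(x_u)}{\partial x}\,\e y_u^v\big)\,d\tau$, one gets
\[
z_\e(s) = \int_0^s \left( \sum_{i=1}^k u^i(\tau)\,\frac{\partial F^i(x_u(\tau))}{\partial x}\, z_\e(\tau) \right) d\tau + \int_0^s r_\e(\tau)\,d\tau,
\]
where the remainder $r_\e$ collects the terms $\sum_i u^i\big( F^i(x_{u+\e v}) - F^i(x_u) - \frac{\partial F^i(x_u)}{\partial x}(x_{u+\e v} - x_u)\big)$ coming from the Taylor expansion of $F^i$ along the segment from $x_u(\tau)$ to $x_{u+\e v}(\tau)$, plus the term $\e\sum_i v^i\big(F^i(x_{u+\e v}) - F^i(x_u)\big)$. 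The bound $|z_\e(\tau)|_2 \le \int_0^\tau |u(\sigma)|_1\, L\, |z_\e(\sigma)|_2\,d\sigma + \int_0^1 |r_\e(\sigma)|_2\,d\sigma$ then puts us in position to apply Lemma~\ref{lem:Gron} with $\alpha = \|r_\e\|_{L^1}$ and $\beta(\cdot) = L|u(\cdot)|_1$, yielding $\|z_\e\|_{C^0} \leq \|r_\e\|_{L^1}\, e^{L\sqrt k\,\|u\|_{L^2}}$ by \eqref{eq:norm_ineq}.

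The crux is therefore to show $\|r_\e\|_{L^1} = o(\e)$. For the second group of terms this is immediate: $|v^i(\tau)|\,|F^i(x_{u+\e v}(\tau)) - F^i(x_u(\tau))|_2 \le L L_R \e \|v\|_{L^2}\,|v^i(\tau)|$, which is $O(\e)$ after integrating, but carries the extra factor $\e$ in front, so it is $O(\e^2) = o(\e)$. For the Taylor-remainder terms, I would write $F^i(x_{u+\e v}(\tau)) - F^i(x_u(\tau)) - \frac{\partial F^i(x_u(\tau))}{\partial x}(x_{u+\e v}(\tau) - x_u(\tau)) = \int_0^1\big(\frac{\partial F^i}{\partial x}(x_u(\tau) + \theta(x_{u+\e v}(\tau) - x_u(\tau))) - \frac{\partial F^i}{\partial x}(x_u(\tau))\big)\,d\theta\,(x_{u+\e v}(\tau) - x_u(\tau))$; since the argument of the first Jacobian differs from $x_u(\tau)$ by at most $L_R\e\|v\|_{L^2}$ and the Jacobians are uniformly continuous on $K$, there is a modulus $\omega$ with $\omega(\delta)\to 0$ as $\delta\to 0^+$ such that this quantity is bounded by $\omega(L_R\e\|v\|_{L^2})\cdot L_R\e\|v\|_{L^2}$. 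Multiplying by $|u^i(\tau)|$, summing over $i$, and integrating gives a contribution bounded by $\omega(L_R\e\|v\|_{L^2})\, L_R\e\|v\|_{L^2}\sqrt k\,\|u\|_{L^2}$, which is $o(\e)$ as $\e\to 0$. Combining, $\|r_\e\|_{L^1} = o(\e)$, hence $\|z_\e\|_{C^0} = o(\e)$, which is \eqref{eq:app_traj_1ord}. The main obstacle is purely technical — keeping track of the uniform compact set $K$ on which all estimates hold and invoking uniform continuity of the Jacobians there — rather than conceptual; everything else is a routine linearization argument closed by Gr\"onwall.
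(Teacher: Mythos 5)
Your proposal is correct and follows essentially the same route as the paper: define the defect $z_\e = x_{u+\e v} - x_u - \e y_u^v$, derive an integral/differential inequality for it by splitting the error into the term $\e v^i(F^i(x_{u+\e v}) - F^i(x_u))$ (which is $O(\e^2)$ by Lipschitz continuity and Proposition~\ref{prop:cont_dep_traj}) and the Taylor remainder of $F^i$ around $x_u$ (which is $o(\e)$ uniformly in $s$ by uniform continuity of the Jacobians on the compact set provided by Lemma~\ref{lem:C0_bound_traj}), and then close with the Gr\"onwall--Bellman inequality. The only cosmetic difference is that you write the estimate directly in integral form and use the mean-value integral representation for the Taylor remainder, whereas the paper first estimates $|\dot\xi(s)|_2$ pointwise and cites the modulus-of-continuity bound \eqref{eq:unif_err_tayl}; the substance is identical.
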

\begin{proof}
Setting $R:=||u||_{L^2}+||v||_{L^2}$,
we observe that 
$||u+\e v||_{L^2} \leq R$
for every $\e\in(0,1]$.
Owing to Lemma~\ref{lem:C0_bound_traj}, we deduce
that there exists a compact $K_{R}\subset\R^n$
such that $x_u(s),x_{u+\e v}(s) \in K_{R}$
for every $s\in[0,1]$ and for every $\e\in(0,1]$.
Using the fact that $F^1,\ldots,F^k$ are assumed to 
be $C^1$-regular, we deduce that they are
uniformly continuous on $K_{R}$. 
This is equivalent to say that 
there exists a non-decreasing
function $\delta:[0,+\infty)\to[0,+\infty)$
such that $\delta(0) = \lim_{r\to 0} \delta(r)=0$
and
\begin{equation*} 
\left| \frac{\partial F^i(x_1)}{\partial x}
- \frac{\partial F^i(x_2)}{\partial x}
\right|_2 \leq \delta(|x_1-x_2|)
\end{equation*}
for every $x_1,x_2\in K_{R}$ and
for every $i=1,\ldots,k$. This
fact implies that there
exists a constant $C>0$ such that for every
$i=1,\ldots, k$ and for every $x_1,x_2\in K_{R}$
the following inequality is satisfied:
\begin{equation} \label{eq:unif_err_tayl}
\left| F^i(x_2) -F^i(x_1)
- \frac{\partial F^i(x_1)}{\partial x}  
(x_2-x_1)
\right|_2 \leq C \delta(|x_1-x_2|)|x_1-x_2|.
\end{equation}
Let us consider the non-autonomous 
affine system \eqref{eq:var_1ord}.
In virtue of Carath\'eodory Theorem
(see \cite[Theorem~5.3]{H80}), we deduce that
the system \eqref{eq:var_1ord} admits
a unique absolutely continuous solution
$y_u^v:[0,1]\to\R^n$.
For every $s\in[0,1]$, let us define
\begin{equation} \label{eq:aux_def_z}
\xi(s):=  x_{u+\e v}(s) -  x_u(s) - \e 
 y_u^v(s).
\end{equation}
Therefore, in view of \eqref{eq:ctrl_Cau}
and \eqref{eq:var_1ord},
for a.e. $s\in[0,1]$ we compute
\begin{align*}
|\dot \xi(s)|_2 \leq &
 \e\sum_{i=1}^k|F^i(x_{u+ \e v}(s))
  - F^i(x_u(s))|_2
|v^i(s)| \\
& \quad  +
\sum_{i=1}^k\left|
F^i(x_{u+ \e v}(s)) - F^i(x_u(s))
- \e \frac{\partial F^i(x_u(s))}{\partial x}
y_u^v(s) \right|_2 |u^i(s)|
\end{align*}
On one hand, using 
Proposition~\ref{prop:cont_dep_traj}
and the Lipschitz-continuity assumption
\eqref{eq:Lipsch_const_F}, we deduce that
there exists $L'>0$ such that
\begin{equation} \label{eq:1ord_comp_1}
\e\sum_{i=1}^k|F^i(x_{u+ \e v}(s)) - F^i(x_u(s))|_2
\leq L'||v||_{L^2} \e^2
\end{equation}
for every $s\in[0,1]$ and for every
$\e\in(0,1]$.
On the other hand, for every $i=1,\ldots,n$,
combining Proposition~\ref{prop:cont_dep_traj},
the inequality \eqref{eq:unif_err_tayl} and
the estimate of the norm of the Jacobian 
\eqref{eq:bound_jac_F},
we obtain that there exists $L''>0$ such that
\begin{align*} 
\Bigg|
F^i(x_{u+ \e v}(s)) - &F^i(x_u(s)) 
- \e \frac{\partial F^i(x_u(s))}{\partial x}y_u^v(s) 
\Bigg|_2 \\
&\leq \Bigg|F^i(x_{u+ \e v}(s)) - F^i(x_u(s)) 
-  \frac{\partial F^i(x_u(s))}{\partial x}
\big(x_{u+ \e v}(s)-x_{u}(s)\big) 
\Bigg|_2\\
&\qquad +
\Bigg|\frac{\partial F^i(x_u(s))}{\partial x}
\big(x_{u+ \e v}(s)- x_{u}(s) -\e y^v_u(s) \big) 
\Bigg|_2\\
&\leq C\Big[ 
\delta(L''||v||_{L^2}\e)L''||v||_{L^2}\e
\Big] + L|\xi(s)|_2.
\end{align*}
for every $s\in[0,1]$ and for every $\e\in(0,1]$.
Combining the last inequality and 
\eqref{eq:1ord_comp_1}, it follows that
\begin{equation}
|\dot \xi(s)|_2 \leq 
L_R\e^2 + L_R |u(s)|_1  \delta(L_R\e)\e
+ L|u(s)|_1|\xi(s)|_2
\end{equation}
for a.e. $s\in[0,1]$ and for every $\e\in(0,1]$,
where we set $L_R:=\max\{ L', L'' \}||v||_{L^2}$.
Finally, recalling that 
$|\xi(0)|_2=|x_{u+\e v}(0) -x_u(0) -\e y_u^v(0)|_2=0$
for every $\e\in(0,1]$,
we have that
\[
|\xi(s)|_2\leq \int_0^s |\dot \xi(\tau)|_2 \,d\tau
\leq 
L_R\e^2 + L_R ||u||_{L^1}\delta(L_R\e)\e 
+ L \int_0^s |u(\tau)|_1|\xi(\tau)|_2 \,d\tau,
\]
for every $s\in[0,1]$ and for every
$\e\in (0,1]$.
Using Lemma~\ref{lem:Gron} and
\eqref{eq:aux_def_z}, we deduce
\eqref{eq:app_traj_1ord}.
\end{proof}

Let us assume that 
$F^1,\ldots,F^k$ are $C^1$-regular.
For every admissible control $u\in\U$,
let us define
$A_u\in L^2([0,1],\R^{n\times n})$
as 
\begin{equation} \label{eq:def_A}
A_u(s) := \sum_{i=1}^k\left(
u^i(s)\frac{\partial F^i(x_u(s))}{\partial x} \right)
\end{equation}
for a.e. $s\in[0,1]$. 
For every $u\in \U$, let us introduce 
the absolutely continuous curve
$M_u:[0,1]\to\R^{n\times n}$, 
defined as the solution of the 
following linear Cauchy problem:
\begin{equation} \label{eq:def_M}
\begin{cases}
\dot M_u(s) = A_u(s) M_u(s) &\mbox{for a.e. }
s\in[0,1], \\
M_u(0) = \mathrm{Id}.
\end{cases}
\end{equation}
The existence and uniqueness of the solution
of \eqref{eq:def_M} descends
once again from the Carath\'eodory Theorem.
We can prove the following result.

\begin{lemma} \label{lem:norm_M}
Let us assume that
the vector fields $F^1,\ldots,F^k$
defining the control system 
\eqref{eq:ctrl_Cau} are 
$C^1$-regular.
For every  admissible control $u\in \U$, 
let $M_u:[0,1] \to \R^{n\times n}$ be the 
solution of the Cauchy problem \eqref{eq:def_M}.
Then, for every $s\in[0,1]$, $M_u(s)$ is 
invertible, and the following estimates hold:
\begin{equation} \label{eq:norm_M_Minv_est}
|M_u(s)|_2 \leq C_u, \quad
|M_u^{-1}(s)|_2 \leq C_u,
\end{equation}
where
\[
C_u= e^{\sqrt k L ||u||_{L^2}}.
\]
\end{lemma}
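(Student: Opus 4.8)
The plan is to derive both estimates in \eqref{eq:norm_M_Minv_est} from two applications of the Gr\"onwall-Bellman inequality (Lemma~\ref{lem:Gron}), and to obtain the invertibility of $M_u(s)$ by exhibiting a candidate inverse as the solution of an auxiliary linear Cauchy problem. The preliminary step is to control the $L^1$-norm of $s\mapsto |A_u(s)|_2$: from the definition \eqref{eq:def_A}, the bound \eqref{eq:bound_jac_F} on the Jacobians of the controlled vector fields, and the inequality \eqref{eq:norm_ineq}, we obtain
\[
|A_u(s)|_2 \leq \sum_{i=1}^k |u^i(s)| \left| \frac{\partial F^i(x_u(s))}{\partial x} \right|_2 \leq L\,|u(s)|_1
\]
for a.e. $s\in[0,1]$, whence $\int_0^1 |A_u(\tau)|_2\,d\tau \leq L\,||u||_{L^1} \leq \sqrt k\, L\,||u||_{L^2}$.

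For the first estimate in \eqref{eq:norm_M_Minv_est}, I would rewrite the Cauchy problem \eqref{eq:def_M} in integral form and use the submultiplicativity of the operator norm \eqref{eq:norm_mat_def} together with $|\mathrm{Id}|_2=1$, getting
\[
|M_u(s)|_2 \leq 1 + \int_0^s |A_u(\tau)|_2\,|M_u(\tau)|_2\,d\tau
\]
for every $s\in[0,1]$; Lemma~\ref{lem:Gron} applied with $\alpha=1$ and $\beta(\tau)=|A_u(\tau)|_2$ then yields $|M_u(s)|_2 \leq e^{\sqrt k\, L\,||u||_{L^2}} = C_u$. To handle the inverse, let $P_u:[0,1]\to\R^{n\times n}$ be the solution — which exists and is unique by the Carath\'eodory Theorem \cite[Theorem~5.3]{H80} — of
\[
\begin{cases}
\dot P_u(s) = -P_u(s)A_u(s) & \mbox{for a.e. } s\in[0,1],\\
P_u(0) = \mathrm{Id}.
\end{cases}
\]
Since $M_u$ and $P_u$ are absolutely continuous (hence bounded), the product $P_u M_u$ is absolutely continuous, and using \eqref{eq:def_M} and the equation for $P_u$ one computes
\[
\frac{d}{ds}\big(P_u(s)M_u(s)\big) = -P_u(s)A_u(s)M_u(s) + P_u(s)A_u(s)M_u(s) = 0
\]
for a.e. $s\in[0,1]$; as $P_u(0)M_u(0)=\mathrm{Id}$, we conclude $P_u(s)M_u(s)=\mathrm{Id}$ for every $s\in[0,1]$. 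Being a square matrix admitting a one-sided inverse, $M_u(s)$ is therefore invertible, with $M_u^{-1}(s)=P_u(s)$.

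Finally, for the second estimate in \eqref{eq:norm_M_Minv_est}, I would repeat the Gr\"onwall argument on the integral form of the Cauchy problem for $P_u$, obtaining $|P_u(s)|_2 \leq 1 + \int_0^s |P_u(\tau)|_2\,|A_u(\tau)|_2\,d\tau$ and hence $|M_u^{-1}(s)|_2 = |P_u(s)|_2 \leq e^{\sqrt k\, L\,||u||_{L^2}} = C_u$. The routine parts are the two Gr\"onwall estimates; the only point that requires a genuine idea is the invertibility of $M_u(s)$, which I would establish precisely through the auxiliary problem for $P_u$ above, relying in an elementary way on the submultiplicativity of the operator norm and on the fact that a square matrix with a one-sided inverse is automatically invertible.
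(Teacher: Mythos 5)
Your proposal is correct and follows essentially the same route as the paper: you introduce the auxiliary solution (called $N_u$ in the paper, $P_u$ in your write-up) of \eqref{eq:def_inv_M}, invoke the Leibniz rule to show $P_uM_u\equiv\mathrm{Id}$ and hence invertibility, and then obtain both estimates by Gr\"onwall using the bound $|A_u(s)|_2\leq L|u(s)|_1$ coming from \eqref{eq:bound_jac_F} and \eqref{eq:norm_ineq}. The only cosmetic difference is that the paper applies Gr\"onwall to $s\mapsto|M_u(s)z|_2$ for fixed $z$ and then takes the supremum, while you apply it directly to $s\mapsto|M_u(s)|_2$ using submultiplicativity of the operator norm; both are valid.
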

\begin{proof}
Let us consider the absolutely continuous curve
$N_u:[0,1]\to\R^{n\times n}$ that solves
\begin{equation}\label{eq:def_inv_M}
\begin{cases}
\dot N_u(s) = -N_u(s)A_u(s) &\mbox{for a.e. }
s\in[0,1], \\
N_u(0) = \mathrm{Id}. 
\end{cases}
\end{equation}
The existence
and uniqueness of the solution of
\eqref{eq:def_inv_M} is guaranteed 
by Carath\'eodory Theorem.
Recalling the Leibniz rule for 
Sobolev functions (see, e.g., 
\cite[Corollary~8.10]{B11}),
a simple computation shows that 
the identity
$
N_u(s)M_u(s) = \mathrm{Id}
$
holds
for every $s\in[0,1]$.
This proves that $M_u(s)$ is invertible
and that $N_u(s) = M_u^{-1}(s)$
for every $s\in[0,1]$.
In order to prove the bound on the norm of
the matrix $M_u(s)$, we shall study
$|M_u(s)z|_2$, for $z\in\R^n$. 
Using \eqref{eq:def_M}, we deduce that
\begin{align*}
|M_u(s)z|_2 &\leq  |z|_2 + \int_0^s 
|A_u(\tau)|_2|M_u(\tau)z|_2 \,d\tau \\
& \leq |z|_2 + L\int_0^s
|u(s)|_1|M_u(\tau)z|_2 \,d\tau,
\end{align*}
where we used \eqref{eq:bound_jac_F}.
Using Lemma~\ref{lem:Gron}, and recalling
\eqref{eq:norm_mat_def} and \eqref{eq:norm_ineq},
we obtain that the
inequality \eqref{eq:norm_M_Minv_est}
holds for $M_u(s)$, for every $s\in[0,1]$.
Using \eqref{eq:def_inv_M} and applying the 
same argument, it is possible to prove that
\eqref{eq:norm_M_Minv_est} holds as well for
$N_u(s)=M_u^{-1}(s)$, for every $s\in[0,1]$.
\end{proof}

Using the curve $M_u:[0,1]\to \R^{n\times n}$
defined by \eqref{eq:def_M},
we can rewrite the solution of the
affine system \eqref{eq:var_1ord} for
the first-order variation of the trajectory.
Indeed, for every $u,v \in \U$,
a direct computation shows that the 
function $y_u^v:[0,1]\to \R^n$ that solves
 \eqref{eq:var_1ord}
can be expressed as
\begin{equation} \label{eq:dec_var_1ord}
y^v_u(s)= 
\int_0^s 
M_u(s)M_u^{-1}(\tau)F(x_u(\tau))v(\tau)
\,d\tau
\end{equation}
for every $s\in[0,1]$.
Using \eqref{eq:dec_var_1ord}
we can prove
an estimate
of the norm of $y^v_u$.

\begin{lemma} \label{lem:est_y}
Let us assume that
the vector fields $F^1,\ldots,F^k$
defining the control system 
\eqref{eq:ctrl_Cau} are 
$C^1$-regular.
Let us consider $u,v\in \U$, and let
$y_u^v:[0,1]\to \R^n$ be the solution
of the affine system \eqref{eq:var_1ord}
with $y_u^v(0)=0$.
Then, for every $R>0$ there exists $C_R>0$
such that the following inequality holds
\begin{equation} \label{eq:est_y}
|y_u^v(s)|_2 \leq  C_R||v||_{L^2}
\end{equation}
for every $s\in[0,1]$ and for every 
$u\in\U$ satisfying $||u||_{L^2}\leq R$.
\end{lemma}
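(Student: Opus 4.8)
The plan is to derive \eqref{eq:est_y} directly from the integral representation \eqref{eq:dec_var_1ord} of the solution $y_u^v$ of the affine system \eqref{eq:var_1ord}, estimating separately the three factors appearing in the integrand. First I would take the Euclidean norm inside the integral in \eqref{eq:dec_var_1ord}, obtaining for every $s\in[0,1]$
\[
|y_u^v(s)|_2 \leq \int_0^s |M_u(s)|_2 \, |M_u^{-1}(\tau)|_2 \, |F(x_u(\tau))v(\tau)|_2 \, d\tau .
\]
For the two matrix factors I would invoke Lemma~\ref{lem:norm_M}: since $||u||_{L^2}\leq R$, we have $|M_u(s)|_2\leq e^{\sqrt k L R}$ and $|M_u^{-1}(\tau)|_2\leq e^{\sqrt k L R}$ for every $s,\tau\in[0,1]$, so their product is bounded by $e^{2\sqrt k L R}$, uniformly in $u$ with $||u||_{L^2}\leq R$.

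Next I would bound the term $|F(x_u(\tau))v(\tau)|_2$. Writing $F(x_u(\tau))v(\tau)=\sum_{i=1}^k F^i(x_u(\tau))v^i(\tau)$ and using the triangle inequality in $\R^n$, I would get
\[
|F(x_u(\tau))v(\tau)|_2 \leq \Big( \sup_{i=1,\ldots,k}|F^i(x_u(\tau))|_2 \Big) |v(\tau)|_1 .
\]
By Lemma~\ref{lem:C0_bound_traj}, the condition $||u||_{L^2}\leq R$ yields a uniform bound $||x_u||_{C^0}\leq \rho_R$ with $\rho_R$ depending only on $R$, $|x_0|_2$, $k$ and $C$; hence the sub-linear growth estimate \eqref{eq:sub_lin} gives $\sup_{i}|F^i(x_u(\tau))|_2 \leq C(\rho_R+1)=:C_R'$ for every $\tau\in[0,1]$, again uniformly in $u$ with $||u||_{L^2}\leq R$.

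Collecting these bounds and using \eqref{eq:norm_ineq} to pass from the $L^1$-norm to the $L^2$-norm of $v$, I would conclude
\[
|y_u^v(s)|_2 \leq e^{2\sqrt k L R}\, C_R' \int_0^1 |v(\tau)|_1\,d\tau \leq e^{2\sqrt k L R}\, C_R'\, \sqrt k \, ||v||_{L^2}
\]
for every $s\in[0,1]$, which is exactly \eqref{eq:est_y} with the choice $C_R := \sqrt k\, C_R'\, e^{2\sqrt k L R}$. I do not expect a genuine obstacle in this argument: the only point requiring some care is bookkeeping of the constants, namely checking that $C_R$ depends on $R$ only through Lemma~\ref{lem:C0_bound_traj} and Lemma~\ref{lem:norm_M} and not on the particular controls $u,v$, and that the resulting estimate is linear in $||v||_{L^2}$ as claimed.
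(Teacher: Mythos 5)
Your proof is correct and follows exactly the same strategy as the paper: start from the integral representation \eqref{eq:dec_var_1ord}, bound the matrix factors via Lemma~\ref{lem:norm_M}, bound $F(x_u(\tau))$ via Lemma~\ref{lem:C0_bound_traj} together with the sub-linear growth \eqref{eq:sub_lin}, and conclude with \eqref{eq:norm_ineq}. You merely spell out the constant more explicitly than the paper does.
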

\begin{proof}
In virtue of \eqref{eq:dec_var_1ord},
we have that 
\begin{equation*}
|y_u^v(s)|_2 \leq \int_0^s\left| 
M_u(s) M_u^{-1}(\tau)
F(x_u(\tau))v(\tau) \right|\,d\tau.
\end{equation*}
Using \eqref{eq:norm_M_Minv_est}, 
\eqref{eq:est_norm_traj} and 
\eqref{eq:sub_lin},
we deduce that there exists $C_R'>0$ such that
\[
|y_u^v(s)|_2 \leq C_R'\int_0^s|v(s)|_1 \,d\tau,
\]
for every $s\in[0,1]$.
Combining this with \eqref{eq:norm_ineq},
we deduce the thesis.
\end{proof}

Let us introduce the end-point map
associated to the control system 
\eqref{eq:ctrl_Cau}. 
For every $s\in [0,1]$, let us consider the
map $P_s:\U\to\R^n$ defined as
\begin{equation} \label{eq:def_end_p_map}
P_s:u\mapsto P_s(u) :=x_u(s),
\end{equation}
where $x_u:[0,1]\to\R^n$ is the solution
of \eqref{eq:ctrl_Cau} corresponding to the
admissible control $u\in\U$.
Using the results obtained before, it follows that
the end-point map is differentiable.

\begin{proposition} \label{prop:rep_diff}
Let us assume that
the vector fields $F^1,\ldots,F^k$
defining the control system \eqref{eq:ctrl_Cau} are 
$C^1$-regular.
For every $s\in [0,1]$, let $P_s:\U\to\R^n$ be the
end-point map defined by 
\eqref{eq:def_end_p_map}. Then, for every 
$u\in \U$, $P_s$ is Gateaux differentiable 
at $u$, and the differential
$D_u P_s = (D_u P_s^1,\ldots,D_uP_s^n):\U\to
\R^n $ is a linear and continuous operator. 
Moreover, using the Riesz's isometry,
for every $u\in \U$
and for every $s\in[0,1]$,
every component of the differential $D_uP_s$
can be represented as follows:
\begin{equation} \label{eq:diff_end_int} 
D_uP^j_s(v) = \int_0^1 
\left\langle g_{s,u}^j(\tau),v(\tau) \right\rangle_{\R^k} \,d\tau,
\end{equation}
where, for every $j=1,\ldots,n$, the function
$g_{s,u}^j:[0,1]\to\R^k$ is defined as
\begin{equation}\label{eq:diff_end_rep}
g_{s,u}^j(\tau) = 
\begin{cases}
\Big( (\mathbf{e}^j)^TM_u(s)M^{-1}_u(\tau)
F(x_u(\tau))
\Big)^T &\tau\in[0,s],\\
0 &\tau\in(s,1],
\end{cases}
\end{equation}
where the column vector $\mathbf{e}^j$ is the
$j$-th element of the standard basis
$\{ \mathbf{e}^1,\ldots,\mathbf{e}^n \}$ of $\R^n$.
\end{proposition}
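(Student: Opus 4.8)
The plan is to reduce everything to the first–order variation already computed in Proposition~\ref{prop:diff_endpoint}. Recall that for fixed $u,v\in\U$ we have the expansion $\|x_{u+\e v}-x_u-\e y_u^v\|_{C^0}=o(\e)$ as $\e\to0$, where $y_u^v$ solves the affine system \eqref{eq:var_1ord} with $y_u^v(0)=0$. Evaluating at $s$ and reading off the $j$-th component, this says precisely that the Gateaux derivative of $P_s^j$ at $u$ in the direction $v$ exists and equals $(y_u^v(s))^j=(\mathbf{e}^j)^Ty_u^v(s)$; that is, $D_uP_s^j(v)=(\mathbf{e}^j)^Ty_u^v(s)$. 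So the first step is just to record that Proposition~\ref{prop:diff_endpoint} delivers Gateaux differentiability of each $P_s^j$ (hence of $P_s$) with differential given by the linear map $v\mapsto y_u^v(s)$.

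Next I would establish that this differential is linear and continuous. Linearity in $v$ is immediate from the uniqueness of solutions of the linear Cauchy problem \eqref{eq:var_1ord}: the map $v\mapsto y_u^v$ is linear because the affine system depends linearly (indeed, affinely through the $F(x_u)v$ term) on $v$ and the homogeneous part is independent of $v$. Alternatively, and more usefully for the representation formula, one invokes the explicit solution formula \eqref{eq:dec_var_1ord}, $y_u^v(s)=\int_0^s M_u(s)M_u^{-1}(\tau)F(x_u(\tau))v(\tau)\,d\tau$, from which linearity in $v$ is manifest. Continuity (boundedness) of $v\mapsto y_u^v(s)$ from $\U$ to $\R^n$ follows directly from Lemma~\ref{lem:est_y}: with $R:=\|u\|_{L^2}$ we get $|y_u^v(s)|_2\le C_R\|v\|_{L^2}$, so $\|D_uP_s\|_{\mathrm{op}}\le C_R$.

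Finally, for the representation \eqref{eq:diff_end_int}--\eqref{eq:diff_end_rep}, I would start from \eqref{eq:dec_var_1ord} and take the $j$-th component:
\[
D_uP_s^j(v)=(\mathbf{e}^j)^Ty_u^v(s)=\int_0^s (\mathbf{e}^j)^TM_u(s)M_u^{-1}(\tau)F(x_u(\tau))v(\tau)\,d\tau.
\]
The integrand is a scalar, equal to the Euclidean inner product in $\R^k$ of $v(\tau)$ with the column vector $\big((\mathbf{e}^j)^TM_u(s)M_u^{-1}(\tau)F(x_u(\tau))\big)^T$ (transposing the row vector $(\mathbf{e}^j)^TM_u(s)M_u^{-1}(\tau)F(x_u(\tau))\in\R^{1\times k}$). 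Extending this function by $0$ on $(s,1]$ gives exactly $g_{s,u}^j$ as defined in \eqref{eq:diff_end_rep}, and the integral over $[0,s]$ becomes an integral over $[0,1]$, yielding \eqref{eq:diff_end_int}. One should also check that $g_{s,u}^j\in L^2([0,1],\R^k)$, which is immediate since on $[0,s]$ it is the product of the essentially bounded matrix-valued maps $M_u$ and $M_u^{-1}$ (bounded by Lemma~\ref{lem:norm_M}) with $F(x_u(\cdot))$, which is bounded because $x_u([0,1])$ is compact by Lemma~\ref{lem:C0_bound_traj} and $F$ is continuous.

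The only genuinely non-routine point is verifying formula \eqref{eq:dec_var_1ord} itself, i.e.\ that the variation-of-constants expression solves \eqref{eq:var_1ord}. This is the ``direct computation'' alluded to before the statement: one differentiates $y_u^v(s)=M_u(s)\int_0^s M_u^{-1}(\tau)F(x_u(\tau))v(\tau)\,d\tau$ using the Leibniz rule for Sobolev (absolutely continuous) functions together with $\dot M_u=A_uM_u$ and $\frac{d}{ds}M_u^{-1}(s)=-M_u^{-1}(s)A_u(s)$, obtaining $\dot y_u^v(s)=A_u(s)y_u^v(s)+F(x_u(s))v(s)$, which is \eqref{eq:var_1ord}; the initial condition $y_u^v(0)=0$ is clear. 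Everything else is bookkeeping with the estimates already proved. I expect no serious obstacle; the main care needed is just the transpose/row-versus-column manipulation in passing from the matrix product to the $\R^k$ inner product, and confirming $L^2$-integrability of $g_{s,u}^j$ so that \eqref{eq:diff_end_int} is a legitimate $L^2$ pairing (this is what lets one later apply the Riesz isometry to represent $d_u\Fb$).
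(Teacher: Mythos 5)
Your proof is correct and follows essentially the same route as the paper's: Gateaux differentiability and the identity $D_uP_s(v)=y_u^v(s)$ from Proposition~\ref{prop:diff_endpoint}, linearity and the integral representation from \eqref{eq:dec_var_1ord}, and continuity from Lemma~\ref{lem:est_y}. The extra details you supply (verifying the variation-of-constants formula and checking $g_{s,u}^j\in L^2$) are consistent with what the paper leaves implicit.
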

\begin{proof}
For every $s\in[0,1]$, 
Proposition~\ref{prop:diff_endpoint}
 guarantees
that the end-point map $P_s:\U\to\R^n$
is Gateaux differentiable at every
point $u\in\U$.
In particular, for every $u,v\in \U$ and for
every $s\in[0,1]$  the following identity holds:
\begin{equation} \label{eq:diff_end_y}
D_uP_s(v) = y_u^v(s).
\end{equation}
Moreover, \eqref{eq:dec_var_1ord} 
 shows that the differential
$D_uP_s:\U\to\R^n$ is linear, 
and Lemma~\ref{lem:est_y} implies that
it is continuous.
The representation follows as well from
\eqref{eq:dec_var_1ord}.
\end{proof}

\begin{remark} \label{rmk:unif_bound_diff}
In the previous proof we used 
Lemma~\ref{lem:est_y} to deduce 
for every $u\in\U$
the continuity of the linear
operator
$D_uP_s:\U\to\R^n$. Actually, 
Lemma~\ref{lem:est_y} is slightly more informative,
since it implies that for every $R>0$
there exists $C_R>0$ such that
\begin{equation} \label{eq:unif_est_dif_endp}
|D_uP_s(v)|_2 \leq C_R||v||_{L^2}
\end{equation}
for every $v\in \U$
and for every $u\in \U$ such that
$||u||_{L^2}\leq R$.
As a matter of fact, we deduce that
\begin{equation} \label{eq:unif_est_rep_dif_endp}
||g^j_{s,u}||_{L^2}\leq C_R
\end{equation}
for every $j=1,\ldots,n$, for every
$s\in[0,1]$ and for every $u\in \U$ such that
$||u||_{L^2}\leq R$.
\end{remark}

\begin{remark} \label{rmk:reg_rep_diff_enpoint}
It is interesting to observe that, for
every $s\in(0,1]$ and for every $u\in\U$,
the function $g_{s,u}^j:[0,1]\to\R^k$
that provides the representation
 the $j$-th component of
$D_uP_s$ is absolutely continuous on the
interval $[0,s]$, being the product
of absolutely continuous matrix-valued curves.
Indeed, on one hand,
$\tau \mapsto F(x_u(\tau))$ is 
absolutely continuous, being the composition of
a $C^1$-regular function with the absolutely
continuous curve $\tau\mapsto x_u(\tau)$
(see, e.g., \cite[Corollary~8.11]{B11}).
On the other hand,
$\tau\mapsto M_u^{-1}(\tau)$
is absolutely continuous as well, since
it solves \eqref{eq:def_inv_M}.   
\end{remark}

We now prove that for every $s\in[0,1]$
the differential of the
end-point map $u\mapsto D_uP_s$ is
Lipschitz-continuous
on the bounded subsets of $\U$. 
This result requires further regularity
assumptions on the controlled vector fields.
We first establish an auxiliary result concerning
 the matrix-valued curve that
solves \eqref{eq:def_M}.

\begin{lemma} \label{lem:lip_dep_M}
Let us assume that
the vector fields $F^1,\ldots,F^k$
defining the control system 
\eqref{eq:ctrl_Cau} are 
$C^2$-regular.
For every $u,w\in \U$, 
let $M_{u},M_{u+w}:[0,1]\to \R^{n\times n}$ 
be the solutions of \eqref{eq:def_M}
corresponding to the admissible controls
$u$ and $u+w$, respectively.
Then, for every $R>0$ there exists $L_R>0$
such that, for every $u,w\in \U$ satisfying
$||u||_{L^2},||w||_{L^2}\leq R$, we have
\begin{equation} \label{eq:lips_M}
|M_{u+w}(s)-M_u(s)|_2 \leq L_R ||w||_{L^2},
\end{equation}
and
\begin{equation} \label{eq:lips_M_inv}
|M_{u+w}^{-1}(s)-M_u^{-1}(s)|_2 \leq L_R ||w||_{L^2}
\end{equation}
for every $s\in[0,1]$.
\end{lemma}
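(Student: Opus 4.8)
The plan is to prove the two estimates by writing integral equations for the differences $M_{u+w}-M_u$ and $M_{u+w}^{-1}-M_u^{-1}$ and applying the Grönwall--Bellman inequality (Lemma \ref{lem:Gron}), exactly in the spirit of the proofs of Proposition \ref{prop:cont_dep_traj} and Lemma \ref{lem:norm_M}. First I would fix $R>0$ and $u,w\in\U$ with $\|u\|_{L^2},\|w\|_{L^2}\le R$, and set $\bar R:=2R$, so that $\|u+w\|_{L^2}\le\bar R$. By Lemma \ref{lem:C0_bound_traj} the trajectories $x_u$ and $x_{u+w}$ stay in a common compact set $K_{\bar R}\subset\R^n$, and by Lemma \ref{lem:norm_M} the curves $M_u,M_{u+w},M_u^{-1},M_{u+w}^{-1}$ are all bounded in $|\cdot|_2$ by $C_{\bar R}:=e^{\sqrt k L\bar R}$.

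The central estimate is the Lipschitz dependence of $A_u$ on $u$. Writing
\[
A_{u+w}(s)-A_u(s)=\sum_{i=1}^k w^i(s)\frac{\partial F^i(x_{u+w}(s))}{\partial x}
+\sum_{i=1}^k u^i(s)\left(\frac{\partial F^i(x_{u+w}(s))}{\partial x}-\frac{\partial F^i(x_u(s))}{\partial x}\right),
\]
I would bound the first sum pointwise by $L\,|w(s)|_1$ using \eqref{eq:bound_jac_F}, and the second sum using the $C^2$-regularity of the $F^i$: on the compact set $K_{\bar R}$ the Jacobians $\partial F^i/\partial x$ are Lipschitz with some constant $L'$, so the second sum is bounded by $L'\,|u(s)|_1\,|x_{u+w}(s)-x_u(s)|_2$, which by Proposition \ref{prop:cont_dep_traj} is at most $L'L_{\bar R}\,|u(s)|_1\,\|w\|_{L^2}$. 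This is the one place where the $C^2$ hypothesis (rather than just $C^1$) is genuinely used, and it is the main technical point of the proof. Integrating the identity $\dot M_{u+w}-\dot M_u=A_{u+w}(M_{u+w}-M_u)+(A_{u+w}-A_u)M_u$ from $0$ (where the difference vanishes) and using the bounds above together with \eqref{eq:norm_ineq}, I get
\[
|M_{u+w}(s)-M_u(s)|_2\le C\,\|w\|_{L^2}+L\sqrt k\int_0^s |u(\tau)+w(\tau)|_1\,|M_{u+w}(\tau)-M_u(\tau)|_2\,d\tau
\]
for a suitable constant $C=C(\bar R)$ absorbing $C_{\bar R}$, $L'$, $L_{\bar R}$ and the $L^1$--$L^2$ factor; then Lemma \ref{lem:Gron} yields \eqref{eq:lips_M} with $L_R:=C\,e^{L\sqrt k\,\bar R}$.

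For \eqref{eq:lips_M_inv} I would argue identically using the Cauchy problem \eqref{eq:def_inv_M} satisfied by $M_u^{-1}=N_u$: the difference $N_{u+w}-N_u$ obeys $\dot N_{u+w}-\dot N_u=-(N_{u+w}-N_u)A_{u+w}-N_u(A_{u+w}-A_u)$, with zero initial condition, and the same ingredients --- the uniform bound on $N_u$, the Lipschitz estimate on $A_{u+w}-A_u$ just derived, and Grönwall --- give the claim. Alternatively one can avoid a second Grönwall argument by using the algebraic identity $M_{u+w}^{-1}-M_u^{-1}=M_{u+w}^{-1}(M_u-M_{u+w})M_u^{-1}$, bounding the outer factors by $C_{\bar R}$ via Lemma \ref{lem:norm_M} and the middle factor by \eqref{eq:lips_M}; this is cleaner and I would present it this way, enlarging $L_R$ if necessary so that a single constant serves for both inequalities. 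No step beyond the $C^2$-Lipschitz bound on the Jacobians presents any real difficulty; everything else is a routine Grönwall estimate on a compact parameter range.
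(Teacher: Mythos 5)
Your proof follows essentially the same route as the paper: bound $A_{u+w}-A_u$ using the $C^2$-Lipschitz constant of the Jacobians on the common compact set from Lemma~\ref{lem:C0_bound_traj} together with Proposition~\ref{prop:cont_dep_traj}, then split $A_{u+w}M_{u+w}-A_uM_u$, integrate from $0$, and apply Grönwall; the paper uses the splitting $(A_{u+w}-A_u)M_{u+w}+A_u(M_{u+w}-M_u)$ whereas you use $(A_{u+w}-A_u)M_u+A_{u+w}(M_{u+w}-M_u)$, which is an immaterial difference. Your alternative derivation of \eqref{eq:lips_M_inv} via the resolvent identity $M_{u+w}^{-1}-M_u^{-1}=M_{u+w}^{-1}(M_u-M_{u+w})M_u^{-1}$ is a clean shortcut that the paper does not use (it repeats the Grönwall argument on \eqref{eq:def_inv_M} instead), but it is equally valid.
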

\begin{proof}
Let us consider $R>0$, and let 
$u,w\in \U$ be such that 
$||u||_{L^2},||w||_{L^2}\leq R$.
We observe that 
Lemma~\ref{lem:C0_bound_traj} implies that
there exists a compact set $K_R\subset \R^n$
such that $x_u(s),x_{u+w}(s)\in K_R$ for every
$s\in[0,1]$. The hypothesis that
$F^1,\ldots,F^2$ are $C^2$-regular implies that
there exists $L_R'>0$ such that
$\frac{\partial F^1}{\partial x},\ldots,
\frac{\partial F^k}{\partial x}$ are 
Lipschitz-continuous in $K_R$ with constant 
$L_R'$.
From \eqref{eq:def_M}, we have that
\begin{equation} \label{eq:lips_M_1}
|\dot M_{u+w}(s)- \dot M_u(s)|_2 = 
|A_{u+w}(s)M_{u+w}(s)-A_u(s)M_{u}(s)|_2,
\end{equation}
for a.e. $s\in[0,1]$.
In particular, for a.e. $s\in[0,1]$, we can compute
\begin{align*}
|A_{u+w}(s)-A_u(s)|_2 &\leq 
\sum_{i=1}^k\left|
\frac{\partial F^i(x_{u+w}(s))}{\partial x} 
-\frac{\partial F^i(x_{u}(s))}{\partial x}
\right|_2 |u^i(s)|\\
& \qquad + 
\sum_{i=1}^k\left|
\frac{\partial F^i(x_{u+w}(s))}{\partial x}
\right|_2 |w^i(s)|,
\end{align*}
and using Proposition~\ref{prop:cont_dep_traj},
the Lipschitz continuity of
$\frac{\partial F^1}{\partial x},\ldots,
\frac{\partial F^k}{\partial x}$
and \eqref{eq:bound_jac_F},
we obtain that there exists
$L_R''>0$ such that
\begin{equation} \label{eq:lips_M_2}
|A_{u+w}(s)-A_u(s)|_2 
\leq L_R''  ||w||_{L^2} |u(s)|_1
+ L|w(s)|_1,
\end{equation}
for a.e. $s\in[0,1]$. Using once again 
\eqref{eq:bound_jac_F}, we have that
\begin{equation} \label{eq:lips_M_3}
|A_u(s)|_2 \leq L|u(s)|_1,
\end{equation}
for a.e. $s\in[0,1]$.
Combining \eqref{eq:lips_M_2}-\eqref{eq:lips_M_3}
with the triangular inequality at the
right-hand side of \eqref{eq:lips_M_1},
we deduce that
\begin{align*} 
|\dot M_{u+w}(s)- \dot M_u(s)|_2 \leq & 
C'_R\big( L_R'' ||w||_{L^2} |u(s)|_1 
 + L|w(s)|_1\big)\\
& \quad + L|u(s)|_1|M_{u+w}(s)-M_u(s)|_2,
\end{align*}
for a.e. $s\in[0,1]$, where we used 
Lemma~\ref{lem:norm_M}
to deduce that there exists $C'_R>0$ such that
$|M_{u+w}(s)| \leq C'_R$ for every $s\in[0,1]$.
Recalling that the Cauchy datum of
\eqref{eq:def_M} prescribes
$M_{u+w}(0)=M_u(0)=\mathrm{Id}$,
the last inequality yields
\begin{align*}
|M_{u+w}(s)-M_u(s)|_2 & \leq
\int_0^s |\dot M_{u+w}(\tau)-\dot M_u(\tau)|_2
 \,d\tau \\
&\leq C''_R||w||_{L^2} + L
\int_0^s |u(s)|_1|M_{u+w}(\tau)-M_u(\tau)|_2
 \,d\tau,
\end{align*}
for every $s\in[0,1]$,
where we used \eqref{eq:norm_ineq}
and where $C''_R>0$ is a constant
depending only on $R$.
Finally, Lemma~\ref{lem:Gron} implies 
the first inequality of the thesis.
Recalling that
$s\mapsto M^{-1}_u(s)$ and
$s\mapsto M^{-1}_{u+w}(s)$ are absolutely continuous
curves that solve \eqref{eq:def_inv_M}, 
repeating {\it verbatim} the same argument as 
above, we deduce the second inequality
of the thesis.
\end{proof}

We are now in position to prove the
regularity result on the differential
of the end-point map.
 
\begin{proposition} \label{prop:Lipsch_diff}
Let us assume that
the vector fields $F^1,\ldots,F^k$
defining the control system 
\eqref{eq:ctrl_Cau} are 
$C^2$-regular.
Then, for every $R>0$ there exists $L_R>0$
such that, for every
$u,w\in \U$ satisfying
$||u||_{L^2},||w||_{L^2}\leq R$,
 the following inequality holds 
\begin{equation} \
|D_{u+w}P_s(v) -D_uP_s(v)|_2 \leq 
L_R ||w||_{L^2}||v||_{L^2}
\end{equation}
for every $s\in[0,1]$ and for every $v\in \U$.
\end{proposition}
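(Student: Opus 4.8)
The plan is to estimate the difference directly, using the integral representation of the differential established earlier. By \eqref{eq:diff_end_y} and \eqref{eq:dec_var_1ord}, for all $u,v\in\U$ and $s\in[0,1]$ one has
\[
D_uP_s(v)=y_u^v(s)=\int_0^s M_u(s)M_u^{-1}(\tau)F(x_u(\tau))v(\tau)\,d\tau ,
\]
so that, for $u,w\in\U$,
\[
D_{u+w}P_s(v)-D_uP_s(v)=\int_0^s\Big(M_{u+w}(s)M_{u+w}^{-1}(\tau)F(x_{u+w}(\tau))-M_u(s)M_u^{-1}(\tau)F(x_u(\tau))\Big)v(\tau)\,d\tau .
\]
First I would fix $R>0$ and restrict to $u,w$ with $\|u\|_{L^2},\|w\|_{L^2}\le R$; then $\|u+w\|_{L^2}\le 2R$, and all the quantities below are controlled by constants depending only on $R$.

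Next I would estimate the matrix-valued integrand by a telescoping argument: write the difference of the two triple products as a sum of three terms, in each of which exactly one of the factors $M_{(\cdot)}(s)$, $M_{(\cdot)}^{-1}(\tau)$, $F(x_{(\cdot)}(\tau))$ is replaced by its $u+w$–version while the other two are frozen in a compatible configuration. For the factor that changes I would invoke, respectively, \eqref{eq:lips_M}, \eqref{eq:lips_M_inv}, and — for $|F(x_{u+w}(\tau))-F(x_u(\tau))|_2$ — the Lipschitz bound \eqref{eq:Lipsch_const_F} on the columns of $F$ together with the continuous dependence estimate \eqref{eq:cont_dep_traj}; each of these is $O(\|w\|_{L^2})$ with a constant depending only on $R$. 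For the two frozen factors I would use the uniform bounds of Lemma~\ref{lem:norm_M} on $|M_{(\cdot)}(s)|_2$ and $|M_{(\cdot)}^{-1}(\tau)|_2$, and a bound $|F(x_{(\cdot)}(\tau))|_2\le C_R$ following from the sub-linear growth \eqref{eq:sub_lin}, Lemma~\ref{lem:C0_bound_traj} and the norm-equivalence \eqref{eq:norm_mat_equiv} (the operator norm of $F(x)$ is controlled by the norms of its columns). Combining, the operator norm of the integrand is bounded by $\widetilde C_R\|w\|_{L^2}$ for every $\tau\in[0,s]$.

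Finally I would integrate and use $\|v\|_{L^1}\le\|v\|_{L^2}$ on $[0,1]$ (cf. \eqref{eq:norm_ineq}) to obtain
\[
|D_{u+w}P_s(v)-D_uP_s(v)|_2\le\int_0^s\widetilde C_R\|w\|_{L^2}\,|v(\tau)|_2\,d\tau\le\widetilde C_R\,\|w\|_{L^2}\,\|v\|_{L^2},
\]
which is the claim with $L_R:=\widetilde C_R$. I do not expect a genuine obstacle here: the substantive step — that $u\mapsto M_u$ and $u\mapsto M_u^{-1}$ are Lipschitz on bounded sets, which is precisely where the $C^2$-regularity of $F^1,\dots,F^k$ enters — has already been settled in Lemma~\ref{lem:lip_dep_M}. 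The only point requiring a little care is organizing the telescoping so that in each of the three terms the ``changing'' factor is paired with uniformly bounded ones; once this bookkeeping is set up, the estimate follows at once.
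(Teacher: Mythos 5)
Your proof is correct and follows essentially the same route as the paper: reduce to a pointwise operator-norm estimate on the kernel $M_{(\cdot)}(s)M_{(\cdot)}^{-1}(\tau)F(x_{(\cdot)}(\tau))$, telescope the triple product into three terms, bound the changing factor by Lemma~\ref{lem:lip_dep_M} (resp. Proposition~\ref{prop:cont_dep_traj} together with \eqref{eq:Lipsch_const_F}) and the frozen factors by Lemma~\ref{lem:norm_M} and Lemma~\ref{lem:C0_bound_traj}, then integrate. The only cosmetic difference is that the paper first reformulates the claim as an $L^2$-Lipschitz bound on the representation functions $g_{s,u}^j$ from Proposition~\ref{prop:rep_diff}, whereas you work directly with the integral form \eqref{eq:dec_var_1ord}; the final step you describe as ``$\|v\|_{L^1}\le\|v\|_{L^2}$'' is really just Cauchy--Schwarz on $[0,1]$ (your displayed inequality is fine, though the reference to \eqref{eq:norm_ineq} is a loose fit since that estimate carries a $\sqrt k$).
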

\begin{proof}
In virtue of Proposition~\ref{prop:rep_diff},
it is sufficient to prove that there
exists $L_R>0$ such that
\begin{equation} \label{eq:lipsc_cont_rep_endpoint}
||g_{s,u+w}^j - g_{s,u}^j||_{L^2} \leq 
L_R||w||_{L^2}
\end{equation}
for every $j=1,\ldots,n$ and for every $u,w\in \U$
such that $||u||_{L^2},||w||_{L^2}\leq R$, where
$g_{s,u+w}^j, g_{s,u}^j$ are defined as in 
\eqref{eq:diff_end_rep}. 
Let us consider $u,w\in \U$ satisfying
$||u||_{L^2},||w||_{L^2}\leq R$.
The inequality \eqref{eq:lipsc_cont_rep_endpoint} 
will in turn follow if we show that there exists 
a constant $L_R>0$ such that
\begin{equation} \label{eq:lips_diff_goal}
|M_{u+w}(s)M^{-1}_{u+w}(\tau)F(x_{u+w}(\tau))
- M_{u}(s)M^{-1}_{u}(\tau)F(x_{u}(\tau))|_2
\leq L_R||w||_{L^2},
\end{equation}
for every $s\in[0,1]$, for every $\tau\in[0,s]$
and for every $u,w\in \U$
that satisfy $||u||_{L^2},||w||_{L^2}\leq R$.
Owing to 
Proposition~\ref{prop:cont_dep_traj} and
\eqref{eq:Lipsch_const_F},
 it follows that
there exists $L'_R>0$ such that
\begin{equation} \label{eq:lips_F}
|F(x_{u+w}(s))-F(x_u(s))|_2
 \leq L'_R||w||_{L^2},
\end{equation}
for every $s\in[0,1]$ and
for every $u,w\in \U$
satisfying $||u||_{L^2},||w||_{L^2}\leq R$.
Using the triangular inequality
in \eqref{eq:lips_diff_goal}, we compute
\begin{align*}
|M_{u+w}(s)M^{-1}_{u+w}(\tau)&F(x_{u+w}(\tau))
- M_{u}(s)M^{-1}_{u}(\tau)F(x_{u}(\tau))|_2\\
&\leq |M_{u+w}(s)- M_{u}(s)|_2
|M^{-1}_{u+w}(\tau)|_2|F(x_{u+w}(\tau))|_2\\
&\quad +|M_u(s)|_2
|M^{-1}_{u+w}(\tau)-M^{-1}_{u}(\tau)|_2|F(x_{u+w}(\tau))|_2\\
&\quad +|M_u(s)|_2
|M^{-1}_{u}(\tau)|_2| F(x_{u+w}(\tau))-
F(x_{u}(\tau))|_2
\end{align*}
for every $s\in[0,1]$ and for every $\tau\in[0,s]$.
Using \eqref{eq:lips_F}, Lemma~\ref{lem:norm_M}
and Lemma~\ref{lem:lip_dep_M}
in the last inequality, we deduce
that \eqref{eq:lips_diff_goal} holds.
This concludes the proof. 
\end{proof}

\end{subsection}

\begin{subsection}{Second-order differential of the
end-point map} \label{subsec:2nd_ord}
In this subsection we study the second-order 
variation of the end-point map 
$P_s:\U\to\R^n$ defined in
\eqref{eq:def_end_p_map}.
The main results reported here will be stated
in the case $s=1$, which
corresponds to the final evolution instant
of the control system \eqref{eq:ctrl_Cau}.
However, they can be extended
(with minor adjustments)
also in the case $s\in(0,1)$.
Similarly as done in Subsection~\ref{subsec:1ord},
we show that, under proper regularity assumptions
on the controlled vector fields $F^1,\ldots,F^k$,
the end-point map $P_1:\U\to \R^n$ is
$C^2$-regular. Therefore, for every
$u\in \U$ we can consider the second differential
$D_u^2 P_1:\U\times \U \to \R^n$, which 
turns out to be a 
bilinear and symmetric operator. 
For every $\nu \in \R^n$, we provide a 
representation of the bilinear form
$\nu \cdot D_u^2 P_1:\U\times \U \to \R$, 
and we prove that it is a compact
self-adjoint operator.

Before proceeding, we introduce some notations.
We define 
$\V:=L^2([0,1],\R^n)$, and we equip it with the
usual Hilbert space structure. In order to avoid
confusion, in the present subsection we 
denote with $||\cdot ||_\U$ and $||\cdot||_\V$
the norms of the Hilbert spaces $\U$ and
$\V$, respectively. We use a similar convention
for the respective scalar products, too.
Moreover, given an application 
$\mathcal{R}:\U\to\V$, for every $u\in \U$ we 
use the notation $\mathcal{R}[u]\in \V$ to denote
the image of $u$ through
$\mathcal{R}$. Then, for $s\in[0,1]$, we write 
$\mathcal{R}[u](s)\in\R^n$ to refer to
the value of 
(a representative of) the function $\mathcal{R}[u]$
at the point $s$. More generally,
we adopt this convention for every function-valued
operator.

It is convenient to introduce a linear operator 
that will be useful to derive the expression of the
second differential of the end-point map.
Assuming that  the controlled fields 
$F^1,\ldots,F^k$
are $C^1$-regular, for every $u\in\U$ we define
$\El_u:\U\to \V$ as follows:
\begin{equation} \label{eq:def_El}
\El_u[v](s) := y_u^v(s)
\end{equation}
for every $s\in[0,1]$, where $y_u^v:[0,1]\to\R^n$
is the curve introduced in 
Proposition~\ref{prop:diff_endpoint}
that solves the affine system \eqref{eq:var_1ord}.
Recalling \eqref{eq:dec_var_1ord}, 
we have that the identity
\begin{equation} \label{eq:def_El_int}
\El_u[v](s) = \int_0^s M_u(s) M_u^{-1}(\tau)
F(x_u(\tau)) v(\tau) \, d\tau
\end{equation} 
holds for every $s\in[0,1]$ and for every
$v\in\U$, and this shows 
that $\El_u$ is a linear operator. 
Moreover, using the standard Hilbert space 
structure of $\U$ and of $\V$,
for every $u\in \U$ we 
can introduce the adjoint of $\El_u$, namely the 
linear operator $\El_u^*:\V \to \U$ that satisfies
\begin{equation} \label{eq:adjoint_El}
\langle \El_u^*[w], v\rangle_{\U} =
\langle \El_u[v], w\rangle_{\V}
\end{equation}
for every $v\in \U$ and $w\in \V$.

\begin{remark} \label{rmk:norm_adj}
We recall a result in functional analysis 
concerning the norm of the adjoint of a bounded 
linear operator. For further details, see
\cite[Remark~2.16]{B11}. 
Given two Banach spaces 
$E_1,E_2$, let $\mathscr{L}(E_1,E_2)$ be the 
Banach 
space of the bounded linear operators from $E_1$
to $E_2$, equipped with the norm induced by
$E_1$ and $E_2$. 
Let $E_1^*, E_2^*$ be the dual spaces of 
$E_1, E_2$, respectively, and
let $\mathscr{L}(E_2^*,E_1^*)$ be defined as above.
Therefore, 
if ${A}\in \mathscr{L}(E_1,E_2)$,
then the adjoint operator satisfies
$A^* \in \mathscr{L}(E_2^*,E_1^*)$,
and the following identity holds:
\[
||A^*||_{\mathscr L (E_2^*,E_1^*)} = 
||A||_{\mathscr L (E_1,E_2)}.
\]
If $E_1,E_2$ are Hilbert spaces, using the 
Riesz's isometry it is possible to 
write $A^*$ as an element of 
$\mathscr{L}(E_2,E_1)$, and the
identity of the norms is still satisfied.
\end{remark}

We now show that $\El_u$ and $\El_u^*$ are 
bounded and compact operators.

\begin{lemma} \label{lem:bound_comp_El}
Let us assume that the vector fields 
$F^1,\ldots,F^k$ defining the control system 
\eqref{eq:ctrl_Cau} are $C^1$-regular.
Then, for every $u\in \U$, the linear operators
$\El_u:\U \to \V$ and 
$\El_u^*:\V\to \U$ defined, 
respectively, by \eqref{eq:def_El} and
\eqref{eq:adjoint_El} are bounded and compact.
\end{lemma}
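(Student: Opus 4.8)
The plan is to establish boundedness first and then upgrade to compactness using the explicit integral representation \eqref{eq:def_El_int}. For boundedness of $\El_u$: fix $u\in\U$ and apply Lemma~\ref{lem:est_y} with $R:=\|u\|_{L^2}$, which gives a constant $C_R>0$ such that $|\El_u[v](s)|_2 = |y_u^v(s)|_2 \leq C_R\|v\|_{L^2}$ for every $s\in[0,1]$; integrating the square over $[0,1]$ yields $\|\El_u[v]\|_{\V}\leq C_R\|v\|_{\U}$, so $\El_u$ is bounded. Boundedness of $\El_u^*$ is then immediate from Remark~\ref{rmk:norm_adj}, since $\|\El_u^*\|=\|\El_u\|$.

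For compactness of $\El_u$, the strategy is to show that $\El_u$ maps bounded sets of $\U$ into bounded, equi-Lipschitz (hence equicontinuous) subsets of $C^0([0,1],\R^n)$, and then invoke the Arzelà–Ascoli theorem together with the fact that $C^0([0,1],\R^n)\hookrightarrow\V=L^2([0,1],\R^n)$ continuously. Concretely, set $\Phi_u(s,\tau):=M_u(s)M_u^{-1}(\tau)F(x_u(\tau))$, which by Lemma~\ref{lem:norm_M} and \eqref{eq:sub_lin}–\eqref{eq:est_norm_traj} is uniformly bounded in operator norm on $\{0\leq\tau\leq s\leq 1\}$ by some constant $C_u'$. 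If $(v_n)_{n\geq1}$ is a bounded sequence in $\U$, say $\|v_n\|_{\U}\leq\rho$, then from \eqref{eq:def_El_int} we get $\|\El_u[v_n]\|_{C^0}\leq C_u'\sqrt k\,\rho$ by Cauchy–Schwarz and \eqref{eq:norm_ineq}; for equicontinuity, for $0\leq s_1<s_2\leq1$ write the difference $\El_u[v_n](s_2)-\El_u[v_n](s_1)$ as $\int_{s_1}^{s_2}\Phi_u(s_2,\tau)v_n(\tau)\,d\tau + \int_0^{s_1}\big(\Phi_u(s_2,\tau)-\Phi_u(s_1,\tau)\big)v_n(\tau)\,d\tau$, and bound the first term by $C_u'\sqrt{|s_2-s_1|}\,\rho$ (Cauchy–Schwarz) and the second using the absolute continuity of $s\mapsto M_u(s)$ (from \eqref{eq:def_M}), which makes $s\mapsto\Phi_u(s,\tau)$ uniformly Lipschitz in $s$; this yields a modulus of continuity independent of $n$. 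By Arzelà–Ascoli, $(\El_u[v_n])$ has a subsequence converging in $C^0$, hence in $\V$, so $\El_u$ is compact. Compactness of $\El_u^*$ then follows because the adjoint of a compact operator between Hilbert spaces is compact (Schauder's theorem).

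The main obstacle I anticipate is the equicontinuity estimate: one must be careful that the Lipschitz bound on $s\mapsto M_u(s)$ comes from $|\dot M_u(s)|_2\leq L|u(s)|_1|M_u(s)|_2$, so the increment $|M_u(s_2)-M_u(s_1)|_2$ is controlled by $L C_u\int_{s_1}^{s_2}|u(\tau)|_1\,d\tau$, which is a modulus of continuity (via absolute continuity of the integral of the $L^1$-function $|u|_1$) but \emph{not} a uniform Lipschitz bound — this is harmless for Arzelà–Ascoli since we only need equicontinuity, but it is the point where the argument is least mechanical. An alternative, perhaps cleaner, route avoiding Arzelà–Ascoli is to note that $\El_u$ factors as the composition of the bounded multiplication/convolution-type operator $v\mapsto \big(\tau\mapsto M_u^{-1}(\tau)F(x_u(\tau))v(\tau)\big)$ from $\U$ into $\V$, followed by the Volterra integration operator $\V\to H^1([0,1],\R^n)$, followed by the compact inclusion $H^1([0,1],\R^n)\hookrightarrow\V$ of Theorem~\ref{thm:comp_sob_imm}, and finally left-multiplication by the continuous curve $M_u$; either presentation works.
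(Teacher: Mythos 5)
Your proof is correct, and the boundedness part follows the paper exactly. For compactness, however, your primary argument (Arzelà--Ascoli applied to the image in $C^0([0,1],\R^n)$) differs from the paper's proof, which instead establishes the stronger estimate $\|\El_u[v]\|_{H^1}\leq C'\|v\|_{\U}$ directly from \eqref{eq:var_1ord} and Lemma~\ref{lem:est_y}, and then invokes the compact embedding $H^1\hookrightarrow L^2$ from Theorem~\ref{thm:comp_sob_imm}. The alternative factorization you sketch at the very end --- multiplication, Volterra integration into $H^1$, compact inclusion, left multiplication by $M_u$ --- is essentially the paper's route rephrased, so in effect you have produced both proofs. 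On the trade-offs: the paper's $H^1$ argument is shorter and reuses machinery already in place (the compact Sobolev embeddings), and it gives a quantitative bound that is recycled later in the paper (e.g.\ in the proof of Proposition~\ref{prop:comp_2nd_diff} and in Section~\ref{sec:comp}). Your Arzelà--Ascoli argument is more elementary and self-contained, at the price of the equicontinuity bookkeeping you correctly identify as the delicate point: since $u$ is only in $L^2$, the curve $s\mapsto M_u(s)$ is absolutely continuous but not uniformly Lipschitz, so the modulus of continuity of $\Phi_u(\cdot,\tau)$ comes from the absolute continuity of $s\mapsto\int_0^s|u(\tau)|_1\,d\tau$ rather than from a Lipschitz estimate --- which, as you note, is exactly what Arzelà--Ascoli needs and no more. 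Both routes are sound; the paper's is the more economical one in context.
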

\begin{proof}
It is sufficient to prove the statement for 
the operator $\El_u:\U\to \V$.
Indeed, if $\El_u$ is bounded and compact, then
$\El_u^*:\V\to \U$ is as well
Indeed, the boundedness of the adjoint
descends from 
Remark~\ref{rmk:norm_adj}, while the compactness
from \cite[Theorem~6.4]{B11}).
Using Lemma~\ref{lem:est_y} we obtain that, for
every $u\in \U$, there exists $C>0$ such that
the following inequality holds
\begin{equation} \label{eq:El_inf_norm}
||\El_u[v]||_{C^0} \leq C ||v||_{\U},
\end{equation}
for every $v\in \U$. Recalling the continuous 
inclusion 
$C^0([0,1],\R^n)\hookrightarrow \V$,
 we deduce that $\El_u$ is a continuous 
linear operator.
In view of Theorem~\ref{thm:comp_sob_imm},
in order to prove that $\El_u$ is compact,
it is sufficient 
to prove that, for every $u\in \U$,
there exists $C'>0$ such that
\begin{equation} \label{eq:El_cont_H1}
||\El_u[v]||_{H^1} \leq C'||v||_{\U}
\end{equation}
for every $v\in \U$.
However, from the definition
of $\El_u[v]$ given in \eqref{eq:def_El}, it
follows that    
\[
\frac{d}{ds}\El_u[v](s) = \dot y_u^v(s)
\]
for a.e. $s\in[0,1]$. Therefore, from 
\eqref{eq:var_1ord} and 
Lemma~\ref{lem:est_y}, we deduce that 
\eqref{eq:El_cont_H1} holds.
\end{proof}

In the next result we study
the local Lipschitz-continuity
of the correspondence
$u\mapsto \El_u$.

\begin{lemma} \label{lem:lipsh_El}
Let us assume that the vector fields 
$F^1,\ldots,F^k$ defining the control system 
\eqref{eq:ctrl_Cau} are $C^2$-regular.
Then, for every $R>0$ there exists $L_R>0$
such that
\begin{equation} \label{eq:lip_El}
||\El_{u+w}[v] - \El_u[v]||_{\V}
\leq  L_R ||w||_{\U} ||v||_{\U}
\end{equation}
for every $v\in \U$ and for every $u,w\in \U$ such 
that $||u||_{\U},||w||_{\U}\leq R$.
\end{lemma}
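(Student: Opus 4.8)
The strategy is to exploit the integral representation \eqref{eq:def_El_int}, which expresses $\El_u[v](s)$ as an integral over $[0,s]$ of the kernel $M_u(s)M_u^{-1}(\tau)F(x_u(\tau))$ applied to $v(\tau)$. Since the same representation holds with $u$ replaced by $u+w$, the difference $\El_{u+w}[v](s)-\El_u[v](s)$ is the integral over $[0,s]$ of
\[
\big( M_{u+w}(s)M_{u+w}^{-1}(\tau)F(x_{u+w}(\tau)) - M_u(s)M_u^{-1}(\tau)F(x_u(\tau)) \big) v(\tau).
\]
This is precisely the matrix-valued quantity already estimated in the proof of Proposition~\ref{prop:Lipsch_diff}: inequality \eqref{eq:lips_diff_goal} gives a constant $L_R'>0$ with
\[
|M_{u+w}(s)M^{-1}_{u+w}(\tau)F(x_{u+w}(\tau)) - M_{u}(s)M^{-1}_{u}(\tau)F(x_{u}(\tau))|_2 \leq L_R'\|w\|_{\U}
\]
for all $s\in[0,1]$, all $\tau\in[0,s]$, whenever $\|u\|_{\U},\|w\|_{\U}\leq R$. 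Note the hypothesis here is that $F^1,\ldots,F^k$ are $C^2$-regular, which is exactly what Proposition~\ref{prop:Lipsch_diff} requires, so this step is available.

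From there the argument is a routine chain of estimates. First I would bound the pointwise norm: for a.e.\ $s\in[0,1]$,
\[
|\El_{u+w}[v](s)-\El_u[v](s)|_2 \leq \int_0^s L_R'\|w\|_{\U}\, |v(\tau)|_1 \, d\tau \leq L_R'\|w\|_{\U}\,\|v\|_{L^1} \leq L_R'\sqrt{k}\,\|w\|_{\U}\,\|v\|_{\U},
\]
where the last step uses \eqref{eq:norm_ineq}. This yields a uniform (in $s$) bound on $|\El_{u+w}[v](s)-\El_u[v](s)|_2$ by $L_R'\sqrt{k}\,\|w\|_{\U}\|v\|_{\U}$, hence in particular the $C^0$-bound. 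Then, since $\|f\|_{\V}^2 = \int_0^1 |f(s)|_2^2\,ds \leq \|f\|_{C^0}^2$ for $f\in C^0([0,1],\R^n)$, I obtain
\[
\|\El_{u+w}[v]-\El_u[v]\|_{\V} \leq L_R'\sqrt{k}\,\|w\|_{\U}\,\|v\|_{\U},
\]
and setting $L_R := L_R'\sqrt{k}$ gives \eqref{eq:lip_El}.

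There is no real obstacle: all the hard work has already been done in Lemma~\ref{lem:norm_M}, Lemma~\ref{lem:lip_dep_M} and Proposition~\ref{prop:Lipsch_diff}, where the local Lipschitz dependence of $M_u$, $M_u^{-1}$ and $x_u$ on the control $u$ was established and combined. The only point requiring a word of care is that \eqref{eq:lips_diff_goal} was stated for $\tau\in[0,s]$, which matches the domain of integration in \eqref{eq:def_El_int}, so one must recall the convention that the integrand in \eqref{eq:def_El_int} is effectively supported on $[0,s]$; beyond that the proof is a direct application of Gröonwall-free elementary estimates (the Gr\"onwall step is already absorbed into the cited results). I would phrase the proof in two or three lines, citing \eqref{eq:lips_diff_goal} from the proof of Proposition~\ref{prop:Lipsch_diff} and \eqref{eq:norm_ineq}.
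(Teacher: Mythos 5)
Your proof is correct and follows essentially the same route as the paper's: both reduce to the integral representation \eqref{eq:def_El_int}, establish the uniform bound on the kernel difference via Lemma~\ref{lem:norm_M}, Lemma~\ref{lem:lip_dep_M}, and Proposition~\ref{prop:cont_dep_traj}, and conclude via the continuous inclusion $C^0\hookrightarrow\V$. The only cosmetic difference is that you cite the already-derived inequality \eqref{eq:lips_diff_goal} from the proof of Proposition~\ref{prop:Lipsch_diff}, whereas the paper re-derives the same kernel estimate in place; both are valid.
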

\begin{proof}
Recalling the continuous inclusion
$C^0([0,1],\R^n)\hookrightarrow
\V$, it is sufficient to prove that
for every $R>0$ there exists $L_R>0$
such that, for every $s\in[0,1]$, the following 
inequality is satisfied
\begin{equation} \label{eq:lip_El_prov}
|\El_{u+w}[v](s) - \El_u[v](s)|_2
\leq  L_R ||w||_{\U} ||v||_\U
\end{equation}
for every $v\in \U$ and for every $u,w\in \U$ such 
that $||u||_\U,||w||_\U\leq R$.
On the other hand, \eqref{eq:def_El_int} implies 
that
\begin{align*}
|\El_{u+w}&[v](s) - \El_u[v](s)|_2
\\ &\leq \int_0^s | M_{u+w}(s) M^{-1}_{u+w}(\tau)
F(x_{u+w}(\tau)) -
M_{u}(s) M^{-1}_{u}(\tau)
F(x_{u}(\tau))|_2 |v(\tau)|_2 \, d\tau. 
\end{align*}
However, using 
Proposition~\ref{prop:cont_dep_traj},
Lemma~\ref{lem:norm_M} and 
Lemma~\ref{lem:lip_dep_M},
we obtain that there exists $L'_R>0$ such that
\begin{equation*}
| M_{u+w}(s) M^{-1}_{u+w}(\tau)
F(x_{u+w}(\tau)) -
M_{u}(s) M^{-1}_{u}(\tau)
F(x_{u}(\tau))|_2
\leq L'_R ||w||_\U
\end{equation*} 
for every $s,\tau \in [0,1]$ and for every
$u,w\in \U$ such that 
$||u||_\U,||w||_\U\leq R$.
Combining the last two inequalities, we deduce
that \eqref{eq:lip_El_prov} holds. 
\end{proof}

\begin{remark}\label{rmk:lip_El_adj}
From Lemma~\ref{lem:lipsh_El} and
Remark~\ref{rmk:norm_adj} it follows that
the correspondence $u\mapsto \El_u^*$
is as well Lipschitz-continuous on the
bounded sets of $\U$.
\end{remark}

If the vector fields $F^1,\ldots, F^k$ are
$C^2$-regular, we write 
$\frac{\partial^2 F^1}{\partial x^2},\ldots,
\frac{\partial^2 F^k}{\partial x^2}$ to denote
their second differential, i.e., for every
$i=1,\ldots, k$ and for every $y\in \R^n$,
the application
$\frac{\partial^2 F^k(y)}{\partial x^2}:
\R^n\times \R^n \to \R^n$ is the symmetric and
bilinear operator that satisfies
\[
F^i(y+h) - F^i(y) - \frac{\partial F^i(y)}{\partial x}
(h) - \frac12 \frac{\partial^2 F^i(y)}{\partial x^2}(h,h) = o(|h|_2^2)
\]
as $|h|_2\to 0$. 

In the next result we investigate the second-order
variation of the solutions produced by
the control system \eqref{eq:ctrl_Cau}.

\begin{proposition}\label{prop:2ord_var}
Let us assume that the vector fields 
$F^1,\ldots,F^k$ defining the control system 
\eqref{eq:ctrl_Cau} are $C^2$-regular.
For every $u,v,w \in \U$, for every 
$\e\in (0,1]$, let 
$y_u^v, y_{u+\e w}^v:[0,1]\to \R^n$
be the solutions of \eqref{eq:var_1ord}
corresponding to the
first-order variation $v$ and
to the admissible
controls $u$ and $u+\e w$, respectively. 
Therefore, we have that
\begin{equation} \label{eq:2nd_diff}
\sup_{||v||_{L^2}\leq 1}
||y_{u+\e w}^v - y_u^v - \e z_u^{v,w}||_{C^0}
= o(\e)\mbox{ as } \e\to 0,
\end{equation}
where $z_u^{v,w}:[0,1]\to\R^n$ is the solution of 
the following affine system:
\begin{align} \label{al:def_z_1} 
\dot z_u^{v,w}(s) =& 
\sum_{i=1}^k \left[
v^i(s) \frac{\partial F^i(x_u(s))}{\partial x} 
y_u^w(s) 
+
w^i(s) \frac{\partial F^i(x_u(s))}{\partial x} 
y_u^v(s) \right] \\
& + \sum_{i=1}^k \label{al:def_z_2}
 u^i(s) 
\frac{\partial^2 F^i(x_u(s))}{\partial x^2}
(y_u^v(s),y_u^w(s))\\
&+ \sum_{i=1}^k u^i(s)
\frac{\partial F^i(x_u(s))}{\partial x} 
z_u^{v,w}(s) \label{al:def_z_3}
\end{align}
with $z_u^{v,w}(0)=0$, and where
$y_u^v,y_u^w:[0,1]\to\R^n$ are the solutions of
\eqref{eq:var_1ord} corresponding to the 
admissible control $u$ and to the first-order
variations $v$ and $w$, respectively. 
\end{proposition}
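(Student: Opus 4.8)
The plan is to follow the same strategy as in the proof of Proposition~\ref{prop:diff_endpoint}, but replacing the first-order Taylor expansions of the controlled fields by second-order ones. Set $R:=\|u\|_{\U}+\|w\|_{\U}+1$, so $\|u+\e w\|_{\U}\le R$ for every $\e\in(0,1]$; by Lemma~\ref{lem:C0_bound_traj} there is a compact $K_R\subset\R^n$ containing $x_u(s)$ and $x_{u+\e w}(s)$ for all $s\in[0,1]$ and all $\e\in(0,1]$. First I would record the a priori bounds that are needed. By Carath\'eodory's theorem the affine system \eqref{al:def_z_1}--\eqref{al:def_z_3} has a unique absolutely continuous solution $z_u^{v,w}$, since its right-hand side is affine in $z$ with $L^1$ coefficient $s\mapsto\sum_i u^i(s)\frac{\partial F^i(x_u(s))}{\partial x}$ and inhomogeneous part a sum of $L^1$ functions (recall $u^i,v^i,w^i\in L^1$ and $y_u^v,y_u^w$ are bounded by Lemma~\ref{lem:est_y}). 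Moreover, applying Lemma~\ref{lem:Gron} to \eqref{al:def_z_1}--\eqref{al:def_z_3} and invoking Lemma~\ref{lem:est_y} one gets a constant $C_R$ with $\|y_u^v\|_{C^0},\|y_u^w\|_{C^0},\|z_u^{v,w}\|_{C^0}\le C_R$ for every $v$ with $\|v\|_{\U}\le1$; checking that this bound is \emph{uniform in such} $v$ is already a point requiring care, as it is what later tames the cubic-type error terms.

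Next I would derive, from the $C^2$-regularity of the $F^i$ on $K_R$, uniform second-order Taylor estimates for $F^i$ and $\frac{\partial F^i}{\partial x}$. Proposition~\ref{prop:cont_dep_traj} gives $|x_{u+\e w}(s)-x_u(s)|_2\le L_R\e$, while Proposition~\ref{prop:diff_endpoint} gives $x_{u+\e w}(s)-x_u(s)=\e\,y_u^w(s)+\rho_\e(s)$ with $\|\rho_\e\|_{C^0}=o(\e)$; combining these with the moduli of continuity of $\frac{\partial F^i}{\partial x}$ and $\frac{\partial^2 F^i}{\partial x^2}$ on $K_R$ (exactly the type of estimate that produced \eqref{eq:unif_err_tayl}) yields, uniformly in $s\in[0,1]$ and \emph{independently of} $v$,
\[
F^i(x_{u+\e w}(s))-F^i(x_u(s))=\e\,\frac{\partial F^i(x_u(s))}{\partial x}y_u^w(s)+o(\e)
\]
and
\[
\frac{\partial F^i(x_{u+\e w}(s))}{\partial x}-\frac{\partial F^i(x_u(s))}{\partial x}=\e\,\frac{\partial^2 F^i(x_u(s))}{\partial x^2}\big(y_u^w(s),\,\cdot\,\big)+o(\e).
\]

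Finally I would set $\eta_\e(s):=y_{u+\e w}^v(s)-y_u^v(s)-\e\,z_u^{v,w}(s)$, so $\eta_\e(0)=0$, subtract the equations \eqref{eq:var_1ord} for $y_{u+\e w}^v$ and for $y_u^v$ and $\e$ times \eqref{al:def_z_1}--\eqref{al:def_z_3}, and substitute $y_{u+\e w}^v=y_u^v+\e z_u^{v,w}+\eta_\e$ together with the two expansions above. After cancellation each term is either (i) absorbed into $\e$ times the right-hand side of \eqref{al:def_z_1}--\eqref{al:def_z_3}, hence cancelling the $\e\dot z_u^{v,w}$ contribution; (ii) of the form $(\text{bounded})\cdot\big(L|u(s)|_1+L\e|w(s)|_1\big)|\eta_\e(s)|$; or (iii) a remainder $r_\e(s)$ dominated by $o(\e)\big(1+|u(s)|_1+|v(s)|_1+|w(s)|_1+|u(s)|_1|v(s)|_1\big)$, plus $O(\e^2)$ terms of the same integrable kind. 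By Cauchy--Schwarz and \eqref{eq:norm_equiv_1_2} all these integrate over $[0,1]$ to $o(\e)$, \emph{uniformly for} $\|v\|_{\U}\le1$ (for instance $\int_0^1|u|_1|v|_1\le k\,\|u\|_{L^2}\|v\|_{L^2}$). Thus $|\eta_\e(s)|\le\alpha_\e+\int_0^s L\big(|u(\tau)|_1+|w(\tau)|_1\big)|\eta_\e(\tau)|\,d\tau$ with $\alpha_\e=o(\e)$ uniform in $v$, and Lemma~\ref{lem:Gron} gives $\sup_{\|v\|_{\U}\le1}\|\eta_\e\|_{C^0}\le\alpha_\e\,e^{L(\|u\|_{L^1}+\|w\|_{L^1})}=o(\e)$, which is \eqref{eq:2nd_diff}. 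I expect the main obstacle to be purely organizational bookkeeping: arranging the expansion so that every error term, after multiplication by the relevant combination of the merely $L^2$ controls, has $L^1$-norm that is both $o(\e)$ and uniform over the unit ball of $\U$ in $v$ — the $v$-uniform $C^0$-bound on $z_u^{v,w}$ from the first step being exactly what renders the quadratic-in-$y$ (hence cubic-in-data) contributions harmless.
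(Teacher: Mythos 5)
Your proposal is correct and takes essentially the same route the paper intends: the paper's own proof is just the one-line remark that one repeats the techniques of Proposition~\ref{prop:diff_endpoint}, and your write-up is precisely that repetition, now with second-order Taylor expansions of the $F^i$ and their Jacobians on the compact $K_R$, the a~priori $C^0$-bound on $z_u^{v,w}$ uniform over $\|v\|_{L^2}\le 1$, and a Gr\"onwall estimate for $\eta_\e$ with $L^1$ coefficient $L(|u|_1+|w|_1)$. The points you single out as requiring care (uniformity in $v$ of the bound on $z_u^{v,w}$, and integrability/$o(\e)$-uniformity of the remainder against the merely $L^2$ controls via Cauchy--Schwarz and \eqref{eq:norm_ineq}) are exactly the right ones, and the argument closes.
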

\begin{proof}
The proof of this result follows using the same
kind of techniques and computations as in the 
proof of Proposition~\ref{prop:diff_endpoint}.
\end{proof}

\begin{remark} \label{rmk:int_z}
Similarly as done in \eqref{eq:dec_var_1ord} for
the first-order variation, we can express the
solution of the affine system
\eqref{al:def_z_1}-\eqref{al:def_z_3}
through an integral formula. Indeed, for
every $u,v,w\in \U$, for every
$s\in[0,1]$ we have that
\begin{align}
z_u^{v,w}(s) = \label{eq:int_z_1}
\int_0^s 
M_u(s) M_u^{-1}(\tau)  &\left( 
\sum_{i=1}^k 
v^i(\tau) \frac{\partial F^i(x_u(\tau))}{\partial x} 
\El_u[w](\tau)  \right.  \\
& \quad + \sum_{i=1}^k \label{eq:int_z_2}
w^i(\tau) \frac{\partial F^i(x_u(\tau))}{\partial x} 
\El_u[v](\tau) \\ 
& \quad \left. + \sum_{i=1}^k 
 u^i(\tau) \label{eq:int_z_3}
\frac{\partial^2 F^i(x_u(\tau))}{\partial x^2}
(\El_u[v](\tau),\El_u[w](\tau))\right)
d\tau,
\end{align}
where we used the linear operator 
$\El_u:\U\to\V$ defined in \eqref{eq:def_El}.
From the previous expression it follows that, 
for every $u, v, w\in \U$, the roles of
$v$ and $w$ are interchangeable, i.e., 
for every $s\in[0,1]$ we have that 
$z_u^{v,w}(s) = z_u^{w,v}(s)$.
Moreover, we observe that, for every $s\in[0,1]$
and for every $u\in \U$, $z_u^{v,w}(s)$ is bilinear
with respect to $v$ and $w$.
\end{remark}

We are now in position to introduce the second 
differential of the end-point map 
$P_s:\U\to \R^n$ defined in 
\eqref{eq:def_end_p_map}. In view of the 
applications in the forthcoming sections, we
shall focus on the case $s=1$, i.e., we 
consider the map $P_1:\U\to\R^n$.
Before proceeding, for every $u\in \U$
we define the symmetric
and bilinear map $\mathcal B_u:\U\times \U
\to \R^n$ as follows
\begin{equation} \label{eq:def_B}
\mathcal{B}_u(v,w):= z_u^{v,w}(1).
\end{equation}

\begin{proposition} \label{prop:second_diff_endp}
Let us assume that the vector fields
$F^1,\ldots,F^k$ defining the control system
\eqref{eq:ctrl_Cau} are $C^2$-regular.
Let $P_1:\U\to \R^n$ be the end-point map
defined by \eqref{eq:def_end_p_map}, and, 
for every $u\in \U$, let $D_uP_1:\U \to \R^n$
be its differential.
Then, the correspondence $u \mapsto D_uP_1$
is Gateaux differentiable at every $u\in \U$,
namely
\begin{equation}\label{eq:G_der_diff}
\lim_{\e \to 0} \, 
\sup_{||v||_{L^2}\leq 1} \left|
\frac{D_{u+\e w}P_1(v) - D_uP_1(v)}{\e}
- \mathcal{B}_u(v,w)
\right|_2 =0,
\end{equation}
where $B_u:\U\times \U \to \R^n$ is the
bilinear, symmetric and bounded operator 
defined in \eqref{eq:def_B}. 
\end{proposition}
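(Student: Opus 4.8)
The plan is to reduce everything to the second-order variation estimate already established in Proposition~\ref{prop:2ord_var}. First I would recall that, by \eqref{eq:diff_end_y} in Proposition~\ref{prop:rep_diff}, the differential of the end-point map acts as $D_uP_1(v) = y_u^v(1)$ for every $v\in\U$, and likewise $D_{u+\e w}P_1(v) = y_{u+\e w}^v(1)$. Since $\mathcal{B}_u(v,w) = z_u^{v,w}(1)$ by \eqref{eq:def_B}, for every $v,w\in\U$ and every $\e\in(0,1]$ one may rewrite
\[
\frac{D_{u+\e w}P_1(v) - D_uP_1(v)}{\e} - \mathcal{B}_u(v,w)
= \frac{1}{\e}\Big( y_{u+\e w}^v(1) - y_u^v(1) - \e\, z_u^{v,w}(1)\Big),
\]
so that, bounding the value at $s=1$ by the $C^0$-norm,
\[
\sup_{||v||_{L^2}\leq 1}\left| \frac{D_{u+\e w}P_1(v) - D_uP_1(v)}{\e} - \mathcal{B}_u(v,w) \right|_2
\leq \frac1\e \sup_{||v||_{L^2}\leq1} ||y_{u+\e w}^v - y_u^v - \e\, z_u^{v,w}||_{C^0}.
\]
By Proposition~\ref{prop:2ord_var} the right-hand side is $\tfrac1\e\,o(\e)$ as $\e\to0$, hence it tends to $0$; this is exactly \eqref{eq:G_der_diff}.

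Second, I would verify that $\mathcal{B}_u$ has the asserted algebraic and analytic properties. Bilinearity of $(v,w)\mapsto z_u^{v,w}(1)$ and the symmetry $z_u^{v,w}=z_u^{w,v}$ follow from the integral representation \eqref{eq:int_z_1}--\eqref{eq:int_z_3} in Remark~\ref{rmk:int_z}, in which $\El_u$ appears linearly in each slot and $v,w$ enter symmetrically. For boundedness, I would estimate the same integral formula: with $R:=||u||_{L^2}$, Lemma~\ref{lem:C0_bound_traj} confines $x_u([0,1])$ to a compact set on which $\partial F^i/\partial x$ and $\partial^2 F^i/\partial x^2$ are bounded; $|M_u(s)M_u^{-1}(\tau)|_2$ is bounded by Lemma~\ref{lem:norm_M}; and $||\El_u[v]||_{C^0}$, $||\El_u[w]||_{C^0}$ are bounded by $C\,||v||_{L^2}$ and $C\,||w||_{L^2}$ respectively by Lemma~\ref{lem:est_y}. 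Combining these with \eqref{eq:norm_ineq} to control $||u||_{L^1}$, $||v||_{L^1}$, $||w||_{L^1}$ gives $|\mathcal{B}_u(v,w)|_2 \leq C_u\,||v||_{L^2}\,||w||_{L^2}$ for a constant $C_u$ that, moreover, can be taken uniform over $u$ in bounded subsets of $\U$.

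The genuinely substantive content — the $o(\e)$ expansion of the first-order variation $y_u^v$ under perturbations of the control, \emph{uniform} over $||v||_{L^2}\leq1$ — is precisely what Proposition~\ref{prop:2ord_var} supplies, so no real obstacle remains at this stage. The only points requiring a little care are to make sure the supremum over $||v||_{L^2}\leq1$ in \eqref{eq:G_der_diff} is handled through the uniform-in-$v$ formulation of Proposition~\ref{prop:2ord_var} (rather than argued pointwise in $v$), and to record the boundedness estimate for $\mathcal{B}_u$ in the uniform-on-bounded-sets form, since it will be reused when discussing the regularity of $\G^\be$ in the later sections.
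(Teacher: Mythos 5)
Your argument is correct and is essentially the paper's own: the identification $D_uP_1(v)=y_u^v(1)$, the reduction of \eqref{eq:G_der_diff} to the uniform $o(\e)$ bound \eqref{eq:2nd_diff} of Proposition~\ref{prop:2ord_var}, the symmetry/bilinearity from Remark~\ref{rmk:int_z}, and the boundedness of $\mathcal{B}_u$ from \eqref{eq:El_inf_norm}, Lemma~\ref{lem:norm_M}, and Lemma~\ref{lem:C0_bound_traj} are exactly the ingredients the paper invokes. Your explicit remark that the supremum over $\|v\|_{L^2}\leq 1$ must be handled via the uniform-in-$v$ form of \eqref{eq:2nd_diff} is a useful clarification of what the paper does more tersely.
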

\begin{proof}
In view of \eqref{eq:diff_end_y},
for every $u,v,w\in \U$ and for every $\e\in(0,1]$, 
we have that $D_uP_1(v)=y_u^v(1)$ and
$D_{u+\e w}P_1(v)=y_{u+\e w}^v(1)$. 
Therefore, \eqref{eq:G_der_diff} follows directly
from \eqref{eq:2nd_diff} and from 
\eqref{eq:def_B}. 
The symmetry and the bilinearity of 
$\mathcal{B}_u:\U\times\U\to \R^n$ 
descend from the observations in 
Remark~\ref{rmk:int_z}.
Finally, we have to show that, for every $u\in \U$,
there exists $C>0$ such that
\begin{equation*}
|\mathcal{B}_u(v,w)|_2 \leq C||v||_{L^2}||w||_{L^2}
\end{equation*}
for every $v,w\in \U$.
Recalling \eqref{eq:def_B} and the integral
expression \eqref{eq:int_z_1}-\eqref{eq:int_z_3},
the last inequality follows from
the estimate \eqref{eq:El_inf_norm}, 
from Lemma~\ref{lem:norm_M}, from
Proposition~\ref{lem:C0_bound_traj}
and from the $C^2$-regularity of 
$F^1,\ldots,F^k$.
\end{proof}

In view of the previous result,
for every $u\in \U$, 
we use $D_u^2P_1:\U\times \U\to \R^n$ to denote the second
differential of the end-point map 
$P_1:\U\to\R^n$.
Moreover, for every
$u,v,w\in \U$ we have the following identities:
\begin{equation} \label{eq:2nd_diff_exp}
D_u^2 P_1(v,w) = \mathcal{B}_u(v,w) = z_u^{v,w}(1).
\end{equation}

\begin{remark}
It is possible to prove that the correspondence
$u\mapsto D_u^2P_1$ is continuous. In particular, 
under the further assumption that the controlled
vector fields $F^1,\ldots,F^k$ are 
$C^3$-regular, the application 
$u\mapsto D_u^2P_1$ is Lipschitz-continuous
on the bounded subsets of $\U$. 
Indeed, taking into account
\eqref{eq:2nd_diff_exp} and
\eqref{eq:int_z_1}-\eqref{eq:int_z_3}, 
this fact follows from 
Lemma~\ref{lem:lip_dep_M}, 
from Lemma~\ref{lem:lipsh_El}
and from the regularity of 
$F^1,\ldots,F^k$.  
\end{remark}

For every $\nu \in \R^n$
and for every $u\in \U$, we can consider the 
bilinear form $\nu\cdot D_u^2P_1:\U\times\U\to\R$,
which is defined as
\begin{equation} \label{eq:bi_form}
\nu\cdot D_u^2P_1(v,w) := \langle
\nu, D_u^2P_1(v,w)
\rangle_{\R^n}.
\end{equation}
We conclude this section by showing that,
using the scalar product of $\U$,
the bilinear form defined in \eqref{eq:bi_form}
can be represented as a self-adjoint compact
operator. Before proceeding, it is convenient to 
introduce two auxiliary linear operators.
In this part we assume that the vector fields
$F^1,\ldots,F^k$ are $C^2$-regular.
For every $u\in \U$ let
us consider the application
$\M_u^\nu:\U \to \V$
defined as follows:
\begin{equation}\label{eq:def_op_M}
\M_u^\nu[v](\tau) := \left( M_u(1)M_u^{-1}(\tau)
\sum_{i=1}^k v^i(\tau) \frac{\partial 
F^i(x_u(\tau))}{\partial x}
\right)^T \nu
\end{equation}
for a.e. $\tau \in [0,1]$, where $x_u:[0,1]
\to \R^n$ is the solution of \eqref{eq:ctrl_Cau}
and $M_u:[0,1]\to\R^{n\times n}$ is defined
in \eqref{eq:def_M}. 
We recall that, for every $i=1,\ldots, k$ and for
every $y\in\R^n$,
$\frac{\partial^2 F^i(y)}{\partial x^2}:\R^n\times
\R^n\to\R^n$ is a symmetric and bilinear function.
Hence, for every $i=1,\ldots, k$, for
every $u\in \U$ and for every 
$\tau \in [0,1]$, we have that the application
\[
(\eta_1, \eta_2)\mapsto \nu^T M_u(1)M_u^{-1}(\tau)
\frac{\partial^2 F^i(x_u(\tau))}{\partial x^2}
(\eta_1, \eta_2)
\]
is a symmetric and bilinear form on $\R^n$.
Therefore, for every $i=1,\ldots, k$, for
every $u\in \U$ and for every 
$\tau \in [0,1]$, we introduce the symmetric
matrix $S_u^{\nu, i}(\tau)\in \R^{n\times n}$
 that satisfies the identity
\begin{equation*}
\langle S_u^{\nu, i}(\tau) \eta _1 ,
\eta_2 \rangle_{\R^n}
= 
\nu^T M_u(1)M_u^{-1}(\tau)
\frac{\partial^2 F^i(x_u(\tau))}{\partial x^2}
(\eta_1, \eta_2)
\end{equation*}
for every $\eta_1, \eta_2 \in \R^n$.
We define the linear operator 
$\mathcal{S}_u^\nu: C^0([0,1],\R^n) \to \V$
as follows:
\begin{equation} \label{eq:def_op_S}
\mathcal{S}_u^\nu[v](\tau) :=
\sum_{i=1}^k u^i(\tau) S_u^{\nu, i}(\tau)
v(\tau)
\end{equation}
for every $v\in C^0([0,1],\R^n)$ and
for a.e. $\tau\in [0,1]$.

In the next result we prove that the linear
operators introduced above
are both continuous.

\begin{lemma} \label{lem:cont_op_M}
Let us assume that the vector fields 
$F^1,\ldots,F^k$ defining
the control system \eqref{eq:ctrl_Cau}
are $C^2$-regular. Therefore,
for every $u\in \U$ and for
every $\nu \in \R^n$ the linear 
operators 
$\M_u^\nu:\U\to \V$ and 
$\mathcal{S}_u^\nu: C^0([0,1],\R^n) \to \V$
defined, respectively, in \eqref{eq:def_op_M} and
\eqref{eq:def_op_S} are continuous.
\end{lemma}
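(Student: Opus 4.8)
The plan is to prove both continuity assertions by elementary $L^2$-estimates, exploiting two facts already available: for a fixed control $u\in\U$ the trajectory $x_u$ has compact image in $\R^n$ (being the continuous image of $[0,1]$), so that $F^1,\dots,F^k$ together with their first and second derivatives are bounded along $x_u$; and, by Lemma~\ref{lem:norm_M}, $|M_u(s)|_2\le C_u$ and $|M_u^{-1}(s)|_2\le C_u$ for every $s\in[0,1]$.

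First I would treat $\M_u^\nu$. One checks that the integrand is measurable: $\tau\mapsto M_u(\tau)$ and $\tau\mapsto M_u^{-1}(\tau)$ are continuous, being the solutions of \eqref{eq:def_M} and \eqref{eq:def_inv_M}, and $\tau\mapsto \frac{\partial F^i(x_u(\tau))}{\partial x}$ is continuous, being the composition of the continuous map $\frac{\partial F^i}{\partial x}$ with $x_u$; hence $\M_u^\nu[v]$ is measurable for every $v\in\U$. Then, using \eqref{eq:norm_M_Minv_est}, the Jacobian bound \eqref{eq:bound_jac_F} and the norm comparison \eqref{eq:norm_equiv_1_2}, I estimate pointwise
\[
|\M_u^\nu[v](\tau)|_2 \le C_u^2\,L\,|\nu|_2\sum_{i=1}^k|v^i(\tau)|
= C_u^2\,L\,|\nu|_2\,|v(\tau)|_1 \le \sqrt{k}\,C_u^2\,L\,|\nu|_2\,|v(\tau)|_2 ,
\]
and integrating over $[0,1]$ yields $||\M_u^\nu[v]||_\V \le \sqrt{k}\,C_u^2 L|\nu|_2\,||v||_\U$, which is the desired boundedness.

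For $\mathcal{S}_u^\nu$ the key preliminary step is to bound the symmetric matrices $S_u^{\nu,i}(\tau)$. Since $x_u([0,1])$ is compact and $F^i$ is $C^2$, there exists $C_2=C_2(u)>0$ such that $\left|\frac{\partial^2 F^i(y)}{\partial x^2}(\eta_1,\eta_2)\right|_2\le C_2|\eta_1|_2|\eta_2|_2$ for every $y\in x_u([0,1])$ and every $\eta_1,\eta_2\in\R^n$; combining this with \eqref{eq:norm_M_Minv_est}, the defining identity of $S_u^{\nu,i}(\tau)$ and the elementary formula $|S|_2=\sup_{\eta_1,\eta_2\ne 0}|\langle S\eta_1,\eta_2\rangle_{\R^n}|\big/\big(|\eta_1|_2|\eta_2|_2\big)$, I obtain $|S_u^{\nu,i}(\tau)|_2\le C_2 C_u^2|\nu|_2$ for every $\tau\in[0,1]$; moreover $\tau\mapsto S_u^{\nu,i}(\tau)$ is continuous, since the associated bilinear form depends continuously on $\tau$. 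Hence, for $v\in C^0([0,1],\R^n)$ the function $\mathcal{S}_u^\nu[v]$ is measurable and, by \eqref{eq:norm_equiv_1_2},
\[
|\mathcal{S}_u^\nu[v](\tau)|_2 \le C_2 C_u^2|\nu|_2\,||v||_{C^0}\sum_{i=1}^k|u^i(\tau)|
= C_2 C_u^2|\nu|_2\,||v||_{C^0}\,|u(\tau)|_1 ;
\]
since $|u(\cdot)|_1\le\sqrt{k}\,|u(\cdot)|_2\in L^2([0,1])$, integrating gives $||\mathcal{S}_u^\nu[v]||_\V\le\sqrt{k}\,C_2 C_u^2|\nu|_2\,||u||_\U\,||v||_{C^0}$, so $\mathcal{S}_u^\nu$ is bounded.

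I do not anticipate a genuine obstacle: the statement is essentially a bookkeeping exercise based on the estimates already developed in this section. The one point deserving attention is the factor $u^i(\tau)$ appearing in the definition of $\mathcal{S}_u^\nu$: because the product of two $L^2$-functions need not lie in $L^2$, one cannot take $\U$ as the domain of $\mathcal{S}_u^\nu$ and must instead work on $C^0([0,1],\R^n)$, using $||v||_{C^0}$ to control the pointwise values of $v$ while leaving the $L^2$-integrability to the factor $|u(\tau)|_1$.
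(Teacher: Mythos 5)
Your proof is correct and follows essentially the same route as the paper: boundedness of $\M_u^\nu$ via Lemma~\ref{lem:norm_M} and \eqref{eq:bound_jac_F}, and boundedness of $\mathcal{S}_u^\nu$ by first bounding $|S_u^{\nu,i}(\tau)|_2$ via compactness of $x_u([0,1])$ and $C^2$-regularity of the $F^i$, then using $||v||_{C^0}$ to absorb the pointwise values of $v$ and $u\in L^2$ to close the $L^2$-estimate. The added remarks on measurability and on why the domain of $\mathcal{S}_u^\nu$ must be $C^0$ rather than $\U$ are accurate and make the argument slightly more explicit than the paper's, but the substance is identical.
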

\begin{proof}
Let us start with $\M_u^\nu:\U\to \V$.
Using Lemma~\ref{lem:norm_M}
and \eqref{eq:bound_jac_F}, we immediately deduce
that there exists $C_1>0$ such that
\begin{equation*}
||\M_u^\nu [v]||_\V \leq C_1 ||v||_\U
\end{equation*}
for every $v\in \U$.
As regards $\mathcal{S}^\nu:C^0([0,1],\R^n)
\to\V$,
from \eqref{eq:def_op_S} we
deduce that 
\begin{equation*}
\big|\mathcal{S}_u^\nu [v](\tau)\big|_2
 \leq \left( \sum_{i=1}^k |u^i(\tau)| 
|S_u^{\nu,i}(\tau)|_2 \right)
||v||_{C^0}
\end{equation*}
for every $v\in \U$ and for a.e. $\tau \in [0,1]$.
Moreover, from Lemma~\ref{lem:norm_M}, from 
Lemma~\ref{lem:C0_bound_traj} and the 
regularity of $F^1,\ldots,F^k$, we deduce that
there exists $C'>0$ such that
\begin{equation*}
|S_u^{\nu,i}(\tau)|_2 \leq C'
\end{equation*}
for every $\tau \in [0,1]$. Combining the last two 
inequalities and recalling that 
$u\in \U=L^2([0,1],\R^k)$, we deduce that
the linear operator 
$\mathcal{S}_u^\nu:C^0([0,1],\R^n)\to\V$
is continuous.
\end{proof}

We are now in position to represent the bilinear
form $\nu \cdot D_u^2P_1:\U \times \U\to \R$
through the scalar product of $\U$.
Indeed, recalling \eqref{eq:bi_form}
and \eqref{eq:2nd_diff_exp}, from 
\eqref{eq:int_z_1}-\eqref{eq:int_z_3}
for every $u\in \U$ we obtain
that
\begin{align*}
\nu \cdot D_u^2P_1(v,w)&=
\langle M_u^\nu [v], \El_u[w] \rangle_\V
+ \langle M_u^\nu [w], \El_u[v] \rangle_\V
+ \langle \mathcal{S}_u^\nu \El_u [v],
\El_u[w] \rangle_{\V}\\
& = \langle \El_u^* M_u^\nu[v], w\rangle_{\U}
+ \langle (\M_u^\nu)^* \El_u[v], w\rangle_{\U}
+ \langle \El_u^* \mathcal{S}_u^\nu \El_u [v],
w \rangle_{\U}
\end{align*}
for every $v,w \in \U$, where 
$(\M_u^\nu)^*:\V\to\U$ is the adjoint of the
linear operator $\M_u^\nu:\U\to\V$.
Recalling Remark~\ref{rmk:norm_adj}, we 
have that  $(\M_u^\nu)^*$ is a bounded 
linear operator.
This shows that the bilinear form 
$\nu\cdot D_u^2P_1:\U\times\U\to\R$ can be 
represented by the linear operator
$\N_u^\nu:\U\to\U$, i.e., 
\begin{equation} \label{eq:rep_bil_form}
\nu\cdot D_u^2P_1(v,w) = \langle \N_u^\nu[v]
, w\rangle_\U
\end{equation}
for every $v,w\in \U$, where 
\begin{equation}\label{eq:def_op_N}
\N_u^\nu := \El_u^* M_u^\nu
+ (\M_u^\nu)^* \El_u
+  \El_u^* \mathcal{S}_u^\nu \El_u.
\end{equation} 
We conclude this section by proving that 
$\N_u^\nu:\U\to\U$ is a bounded, compact
and self-adjoint operator.

\begin{proposition} \label{prop:comp_2nd_diff}
Let us assume that the vector fields 
$F^1,\ldots,F^k$ defining
the control system \eqref{eq:ctrl_Cau}
are $C^2$-regular. For every $u\in \U$
and for every $\nu\in \R^n$, let $\N_u^\nu:\U\to\U$
be the linear operator 
that represents the bilinear form
$\nu\cdot D_u^2P_1:\U\times \U\to \R$
through the identity \eqref{eq:rep_bil_form}.
Then $\N_u^\nu$ is continuous, 
compact and self-adjoint.
\end{proposition}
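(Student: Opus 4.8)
The plan is to read off everything from the explicit decomposition \eqref{eq:def_op_N}, namely $\N_u^\nu = \El_u^* \M_u^\nu + (\M_u^\nu)^* \El_u + \El_u^* \mathcal{S}_u^\nu \El_u$, and to treat continuity, compactness and self-adjointness in turn. For continuity and compactness I would argue summand by summand, using that the composition of a bounded operator with a compact one is again compact (hence, in particular, bounded). The operator $\M_u^\nu:\U\to\V$ is bounded by Lemma~\ref{lem:cont_op_M}, while $\El_u^*:\V\to\U$ is bounded and compact by Lemma~\ref{lem:bound_comp_El}; therefore the first summand $\El_u^*\M_u^\nu$ is bounded and compact. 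For the second summand I would invoke that $\El_u:\U\to\V$ is bounded and compact (again Lemma~\ref{lem:bound_comp_El}) and that $(\M_u^\nu)^*:\V\to\U$ is bounded (Remark~\ref{rmk:norm_adj}), so that $(\M_u^\nu)^*\El_u$ is bounded and compact.

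The third summand requires a little care about the domains. Here I would recall from the proof of Lemma~\ref{lem:bound_comp_El} the estimate \eqref{eq:El_inf_norm}, which shows that $\El_u$ is in fact a bounded operator from $\U$ into $C^0([0,1],\R^n)$; composing it with the continuous operator $\mathcal{S}_u^\nu:C^0([0,1],\R^n)\to\V$ from Lemma~\ref{lem:cont_op_M}, and then with the bounded compact operator $\El_u^*:\V\to\U$, we see that $\El_u^*\mathcal{S}_u^\nu\El_u:\U\to\U$ is bounded and compact. Since $\N_u^\nu$ is the sum of three bounded compact operators, it is itself bounded and compact.

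Self-adjointness I would deduce from the symmetry of the second differential. By Proposition~\ref{prop:second_diff_endp} the bilinear map $\mathcal{B}_u = D_u^2P_1$ is symmetric (equivalently, $z_u^{v,w}=z_u^{w,v}$, as observed in Remark~\ref{rmk:int_z}), hence by \eqref{eq:bi_form} the real bilinear form $\nu\cdot D_u^2P_1:\U\times\U\to\R$ is symmetric as well. Combining this with the representation \eqref{eq:rep_bil_form} and with the symmetry of the scalar product of the real Hilbert space $\U$, we obtain
\[
\langle\N_u^\nu[v],w\rangle_\U=\nu\cdot D_u^2P_1(v,w)=\nu\cdot D_u^2P_1(w,v)=\langle v,\N_u^\nu[w]\rangle_\U
\]
for every $v,w\in\U$, which is precisely the statement that $\N_u^\nu$ is self-adjoint. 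I do not expect any genuine obstacle in this argument: the only point demanding attention is bookkeeping of domains and codomains — in particular that $\mathcal{S}_u^\nu$ is defined on $C^0([0,1],\R^n)$ and not on $\U$, so that the inner factor $\El_u$ must be routed through $C^0$ via \eqref{eq:El_inf_norm} — together with a correct appeal to the symmetry of $D_u^2P_1$ in the last step.
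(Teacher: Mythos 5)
Your proof is correct and follows essentially the same route as the paper: the decomposition \eqref{eq:def_op_N}, boundedness of $\M_u^\nu$ and $\mathcal{S}_u^\nu$ from Lemma~\ref{lem:cont_op_M}, compactness of $\El_u,\El_u^*$ from Lemma~\ref{lem:bound_comp_El}, and the routing of the inner $\El_u$ through $C^0$ via \eqref{eq:El_inf_norm}. The only small deviation is that you derive self-adjointness in one stroke from the symmetry of the bilinear form $\nu\cdot D_u^2P_1$ (Remark~\ref{rmk:int_z}), whereas the paper observes it summand by summand from the structure $A+A^*$ and $\El_u^*\mathcal{S}_u^\nu\El_u$ with $\mathcal{S}_u^\nu$ symmetric; your version is slightly cleaner and equally valid.
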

\begin{proof}
We observe that the term 
$\El_u^* M_u^\nu
+ (\M_u^\nu)^* \El_u$ at the right-hand
side of \eqref{eq:def_op_N} is continuous, 
since it is obtained as the sum and
the composition of
continuous linear operators,
as shown in Lemma~\ref{lem:bound_comp_El} and
Lemma~\ref{lem:cont_op_M}. Moreover, it is also
compact, since both $\El_u$ and $\El_u^*$ are,
in virtue of Lemma~\ref{lem:bound_comp_El}.
Finally, the fact that $\El_u^* M_u^\nu
+ (\M_u^\nu)^* \El_u$ is self-adjoint 
is immediate.
Let us consider the last term at the right-hand
side of \eqref{eq:def_op_N}, i.e., 
$\El_u^* \mathcal{S}_u^\nu \El_u$. 
We first observe that 
$\mathcal{S}_u^\nu \El_u:\U\to\V$ is continuous, 
owing to Lemma~\ref{lem:cont_op_M} and the
inequality \eqref{eq:El_inf_norm}.  
Recalling that $\El_u^*:\V\to\U$ is compact, 
the composition $\El_u^* \mathcal{S}_u^\nu \El_u
:\U\to\U$ is compact as well. Once again, the 
operator is clearly self-adjoint. 
\end{proof}

\end{subsection}

\end{section}

\begin{section}{Gradient flow: well-posedness and
global definition.} \label{sec:well_posed}

For every $\be > 0$, we consider the
functional $\F^\be :\U\to\R_+$ defined as follows:
\begin{equation} \label{eq:cost}
\Fb(u) := \frac12 ||u||_{L^2}^2 + \be a(x_u(1)),
\end{equation}
where $a:\R^n \to \R_+$ is a non-negative 
$C^1$-regular function,
and, for every $u\in \U$, 
$x_u:[0,1]\to\R^n$ is the solution of the
Cauchy problem \eqref{eq:ctrl_Cau}
corresponding to the admissible control
$u\in\U$.
In this section we want to study the gradient
flow induced by the functional $\Fb$ on the
Hilbert space $\U$.
In particular, we establish
a result that guarantees
existence, uniqueness and global definition of
the solutions of the gradient flow 
equation associated to $\Fb$.
In this section we 
adopt the approach of the monograph 
\cite{LL72}, where the theory of ODEs in
Banach spaces is developed.  

We start from the notion of differentiable 
curve in $\U$. 
We stress that in the present paper 
the time variable
$t$ is exclusively employed for curves
taking values in $\U$. In particular, we recall
that we use $s\in[0,1]$ 
to denote the time variable only
in the control system \eqref{eq:ctrl_Cau}
and in the related objects (e.g.,
admissible controls, controlled trajectories,
etc.).  
Given a curve $U:(a,b)\to\U$, we say that it
is (strongly) differentiable at $t_0\in (a,b)$
if there exists $u\in \U$ such that
\begin{equation} \label{eq:def_der_curv}
\lim_{t\to t_0}
\left|\left|
\frac{U_t-U_{t_0}}{t-t_0} - u
\right|\right|_{L^2}=0.
\end{equation}
In this case, we use the notation 
$\partial_t U_{t_0}:=u$.
In the present section
we study the well-posedness in $\U$ 
of the evolution equation 
\begin{equation} \label{eq:grad_flow}
\begin{cases}
\partial_t U_t = -\G^\beta[U_t],\\
U_0=u_0,
\end{cases}
\end{equation}   
where $\G^\be:\U\to\U$ is the representation
of the differential $d\Fb:\U\to\U^*$
through the Riesz isomorphism, i.e.,
\begin{equation} \label{eq:rep_fun}
\langle\G^\beta[u] ,v\rangle_{L^2} = d_u\Fb(v)
\end{equation}
for every $u,v\in\U$. More precisely,
for every initial datum 
$u_0 \in \U$
we prove that there exists a curve 
$t\mapsto U_t$ that solves
\eqref{eq:grad_flow}, that it is unique and
that it is defined for every $t\geq 0$.  

We first show that $d_u\Fb$
can be actually represented as
an element of $\U$, for every $u \in \U$. We 
immediately observe that this problem reduces to
study the differential of the
end-point cost, i.e., the functional 
$\mathcal{E}: \U \to \R_+$, defined as
\begin{equation} \label{eq:end_cost}
\mathcal{E}(u) := a(x_u(1)),
\end{equation}
where $x_u:[0,1]\to\R^n$ is the solution of
\eqref{eq:ctrl_Cau}
corresponding to the admissible control
$u\in\U$.

\begin{lemma} \label{lem:diff_MM}
Let us assume that the vector fields
$F^1,\ldots,F^k$ defining the control system
\eqref{eq:ctrl_Cau} are $C^1$-regular, as
well as the function $a:\R^n\to\R_+$ 
designing the end-point cost.
Then the functional $\mathcal{E}:\U\to\R_+$ 
is Gateaux
differentiable at every $u\in \U$.
Moreover, using the Riesz's isomorphism,
for every $u\in\U$ the 
differential $d_u\mathcal{E}:\U\to\R$
 can be represented as follows:
\begin{equation} \label{eq:rep_diff_endcost_int}
d_u\mathcal{E}(v) = \int_0^1 \sum_{j=1}^n
\left( \frac{\partial a(x_u(1))}{\partial x^j}
\langle g^j_{1,u}(\tau),v(\tau) \rangle_{\R^k}
\right)
\,d\tau
\end{equation}
for every $v\in \U$, where, 
for every $j=1,\ldots,n$, the function
$g^j_{1,u}\in \U$ is defined as in 
\eqref{eq:diff_end_rep}.
\end{lemma}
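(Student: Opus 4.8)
The plan is to reduce the claim to the already-established differentiability of the end-point map $P_1:\U\to\R^n$ and then apply the chain rule. Since $\mathcal{E}(u) = a(P_1(u))$, with $a:\R^n\to\R_+$ of class $C^1$ and $P_1$ Gateaux differentiable at every $u\in\U$ by Proposition~\ref{prop:rep_diff}, the composition $\mathcal{E}$ should be Gateaux differentiable with differential given by $d_u\mathcal{E}(v) = \langle \nabla a(x_u(1)), D_uP_1(v)\rangle_{\R^n} = \sum_{j=1}^n \frac{\partial a(x_u(1))}{\partial x^j} D_uP_1^j(v)$. So the first step is to justify this chain rule carefully in the Gateaux sense.

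Concretely, I would fix $u,v\in\U$ and study the incremental quotient $\frac{1}{\e}\big(\mathcal{E}(u+\e v)-\mathcal{E}(u)\big) = \frac{1}{\e}\big(a(x_{u+\e v}(1))-a(x_u(1))\big)$ as $\e\to 0$. Writing $x_{u+\e v}(1) = x_u(1) + \e y_u^v(1) + r(\e)$ with $\|r(\e)\|=o(\e)$ (this is exactly Proposition~\ref{prop:diff_endpoint}, together with $D_uP_1(v)=y_u^v(1)$ from \eqref{eq:diff_end_y}), and using the first-order Taylor expansion of $a$ at $x_u(1)$, namely $a(x_u(1)+h) = a(x_u(1)) + \langle \nabla a(x_u(1)), h\rangle_{\R^n} + o(|h|_2)$ as $|h|_2\to 0$, one gets
\[
\frac{1}{\e}\big(a(x_{u+\e v}(1))-a(x_u(1))\big) = \langle \nabla a(x_u(1)), y_u^v(1)\rangle_{\R^n} + o(1)
\]
as $\e\to 0$. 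This establishes Gateaux differentiability and identifies $d_u\mathcal{E}(v) = \langle \nabla a(x_u(1)), y_u^v(1)\rangle_{\R^n} = \sum_{j=1}^n \frac{\partial a(x_u(1))}{\partial x^j}\, D_uP_1^j(v)$.

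For the integral representation \eqref{eq:rep_diff_endcost_int}, I would simply plug in the formula \eqref{eq:diff_end_int} from Proposition~\ref{prop:rep_diff}, which gives $D_uP_1^j(v) = \int_0^1 \langle g^j_{1,u}(\tau), v(\tau)\rangle_{\R^k}\,d\tau$ with $g^j_{1,u}$ as in \eqref{eq:diff_end_rep}. Linearity lets me pull the finite sum over $j$ inside the integral, and since $\frac{\partial a(x_u(1))}{\partial x^j}$ is a constant (it does not depend on $\tau$), it can be moved under the integral sign, yielding exactly the claimed identity. The remark that the differential is "represented as an element of $\U$" follows because $g^j_{1,u}\in L^2$ by Remark~\ref{rmk:unif_bound_diff} (so $\sum_j \frac{\partial a(x_u(1))}{\partial x^j} g^j_{1,u}\in\U$ is the Riesz representative), though if one wants this spelled out it can be added.

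The only mild obstacle is making the $o(\e)$ bookkeeping rigorous: one must check that the remainder in the Taylor expansion of $a$, evaluated along the perturbed trajectories, is genuinely $o(\e)$ uniformly enough to conclude — but this is immediate here because $x_{u+\e v}(1)\to x_u(1)$ as $\e\to 0$ (by Proposition~\ref{prop:cont_dep_traj}, the increment is $O(\e)$), so $|x_{u+\e v}(1)-x_u(1)|_2 = O(\e)$ and hence the $o(|h|_2)$ term in the expansion of $a$ is $o(\e)$. I expect this step to be essentially routine; the real content has already been done in Subsection~\ref{subsec:1ord}, and this lemma is just the chain rule packaged for the end-point cost.
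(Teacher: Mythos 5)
Your proposal is correct and takes essentially the same approach as the paper: both decompose $\mathcal{E}=a\circ P_1$, invoke the Gateaux differentiability of $P_1$ from Proposition~\ref{prop:diff_endpoint}, apply the chain rule, and then substitute the integral representation of $D_uP_1^j$ from Proposition~\ref{prop:rep_diff}. You merely unpack the chain rule into its Taylor-expansion bookkeeping, which the paper leaves implicit.
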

\begin{proof}
We observe that the functional 
$\mathcal{E}:\U\to\R_+$
is defined as the composition
\[
\mathcal{E}= a\circ P_1,
\]
where $P_1:\U\to\R^n$ is the end-point 
map defined in \eqref{eq:def_end_p_map}.
Proposition~\ref{prop:diff_endpoint}
guarantees that the end-point map $P_1$
is Gateaux
differentiable at every $u\in \U$. 
Recalling that $a:\R^n\to\R_+$ is assumed to be 
$C^1$, we deduce that,
for every $u\in\U$,
$\mathcal{E}$ is Gateaux differentiable at 
$u$ and that, for every $v\in\U$, the following
identity holds: 
\begin{equation} \label{eq:chain_diff}
d_u\mathcal{E}(v) 
= \sum_{j=1}^n \frac{\partial a (x_u(1))}{\partial
x^j}D_u P_1^j(v),
\end{equation}  
where $x_u:[0,1]\to\R^n$ is the solution of
\eqref{eq:ctrl_Cau} corresponding to the control 
$u\in\U$.
Recalling that
$D_uP_1^1,\ldots,D_uP_1^n:\U\to\R$ are 
linear and continuous functionals
for every $u\in\U$
(see Proposition~\ref{prop:rep_diff}), 
from \eqref{eq:chain_diff}
we deduce that $d_u\mathcal{E}:\U\to\R$ is as well. 
Finally, from \eqref{eq:diff_end_int} 
we obtain \eqref{eq:rep_diff_endcost_int}.
\end{proof}

\begin{remark}
Similarly as done in 
Remark~\ref{rmk:unif_bound_diff},
we can provide a uniform estimate
of the norm of $d_u\mathcal{E}$ when $u$ varies on
a bounded set. Indeed,
recalling Lemma~\ref{lem:C0_bound_traj}
and the fact that $a:\R^n\to\R_+$ is 
$C^1$-regular, 
for every $R>0$
there exists $C'_R>0$ such
that
\[
\left| 
\frac{\partial a(x_u(1))}{\partial x^j}
\right| \leq C_R'
\]
for every $j=1,\ldots,n$ and
for every $u\in \U$ such that $||u||_{L^2}\leq R$.
Combining the last inequality with
\eqref{eq:chain_diff} and
 \eqref{eq:unif_est_dif_endp},
we deduce that there exists $C_R>0$ such that
for every $||u||_{L^2}\leq R$ the
estimate
\begin{equation} \label{eq:unif_est_diff_endc}
|d_u\mathcal{E}(v)|_2 \leq C_R||v||_{L^2}
\end{equation}
holds for every $v\in\U$.
\end{remark}

\begin{remark} \label{rmk:def_lambda}
We observe that, for every $u,v\in \U$,
we can rewrite
\eqref{eq:rep_diff_endcost_int} as follows
\begin{equation} \label{eq:rep_diff_endcost_int2}
d_u\mathcal{E}(v) = \int_0^1
\left\langle F^T(x_u(\tau))\lambda_u^T(\tau),
v(\tau)
\right\rangle_{\R^k}
\,d\tau, 
\end{equation}
where $\lambda_u:[0,1]\to (\R^n)^*$ is an
absolutely continuous curve
defined for every $s\in[0,1]$
 by the relation
\begin{equation} \label{eq:def_lambda}
\lambda_u(s) :=\nabla_{x_u(1)}a \cdot  M_u(1)
M_u^{-1}(s),
\end{equation}
where $M_u:[0,1]\to \R^{n\times n}$ is defined 
as in \eqref{eq:def_M}, and 
$\nabla_{x_u(1)} a$ is understood as a row vector.
Recalling that $s\mapsto M_u^{-1}(s)$ solves
\eqref{eq:def_inv_M}, it turns out that
$s\mapsto \lambda_u(s)$ is the solution of
the following linear Cauchy problem:
\begin{equation} \label{eq:def_lambda_cau}
\begin{cases}
\dot\lambda_u(s) = -\lambda_u(s)
\sum_{i=1}^k\limits
\left(
u^i(s)\frac{\partial F^i(x_u(s))}{\partial x}
\right) & \mbox{for a.e. }s\in[0,1],\\
\lambda_u(1) = \nabla_{x_u(1)}a.
\end{cases}
\end{equation}
Finally, \eqref{eq:rep_diff_endcost_int2}
implies that, for every $u\in\U$,
we can represent $d_u\mathcal{E}$ with the 
function $h_u:[0,1]\to\R^k$ defined as
\begin{equation}\label{eq:rep_diff_endcost_rep}
h_u(s) := F^T(x_u(s))\lambda_u^T(s)
\end{equation}
for a.e. $s\in[0,1]$.
We observe that \eqref{eq:unif_est_diff_endc}
and the Riesz's isometry imply that for every
$R>0$ there exists $C_R>0$ such that
\begin{equation}
 \label{eq:unif_est_rep_diff_endc}
||h_u||_{L^2} \leq C_R
\end{equation}
for every $u\in \U$ such that $||u||_{L^2}\leq R$.
We further underline
that the representation $h_u:[0,1]\to\R^k$ of the
differential $d_u\mathcal{E}$ is actually
absolutely continuous, similarly as observed
in Remark~\ref{rmk:reg_rep_diff_enpoint}
for the representations of the components of the
differential of the end-point map.
\end{remark}

Under the assumption that the controlled vector
fields $F^1,\ldots,F^k$ and the 
function $a:\R^n\to\R_+$ are $C^2$-regular, we
can show that the differential $u\mapsto d_u\mathcal{E}$
is Lipschitz-continuous on bounded sets. 

\begin{lemma}\label{lem:lipsh_diff_endcost}
Let us assume that the vector fields
$F^1,\ldots,F^k$ defining the control system
\eqref{eq:ctrl_Cau} are $C^2$-regular, as
well as the function $a:\R^n\to\R_+$ 
designing the end-point cost.
Then, for every $R>0$
there exists $L_R>0$ such that
\begin{equation} \label{eq:lipsh_diff_cost}
||h_{u+w}-h_u||_{L^2}\leq L_R||w||_{L^2}
\end{equation}
for every $u,w\in \U$
satisfying $||u||_{L^2},||w||_{L^2}\leq R$,
where $h_{u+w},h_{u}$ are the representations,
respectively, of $d_{u+w}\mathcal{E}$ and $d_u
\mathcal{E}$
provided by \eqref{eq:rep_diff_endcost_rep}.
\end{lemma}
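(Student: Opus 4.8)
The goal is to show that $u \mapsto h_u$ is Lipschitz on bounded sets, where $h_u(s) = F^T(x_u(s))\lambda_u^T(s)$ and $\lambda_u$ solves the adjoint Cauchy problem \eqref{eq:def_lambda_cau}, equivalently $\lambda_u(s) = \nabla_{x_u(1)}a \cdot M_u(1)M_u^{-1}(s)$. The plan is to fix $R>0$, take $u,w\in\U$ with $\|u\|_{L^2},\|w\|_{L^2}\le R$, and estimate $\|h_{u+w}-h_u\|_{L^2}$ by a pointwise bound on $|h_{u+w}(s)-h_u(s)|_2$ that is uniform in $s\in[0,1]$; integrating then gives the $L^2$ bound. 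By Lemma~\ref{lem:C0_bound_traj} all trajectories $x_u, x_{u+w}$ live in a common compact set $K_R\subset\R^n$, so the $C^2$-regularity of $a$ gives that $\nabla a$ is bounded and Lipschitz on $K_R$, and the $C^2$-regularity of $F^1,\ldots,F^k$ gives that $F$ is bounded and Lipschitz on $K_R$ with the needed derivative bounds \eqref{eq:bound_jac_F}.

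The main step is a telescoping decomposition of $h_{u+w}(s) - h_u(s)$. Writing $h_u(s)^T = \nabla_{x_u(1)}a \cdot M_u(1)M_u^{-1}(s) F(x_u(s))$, I would split the difference into three groups of terms according to which of the three factors changes: first, $\bigl(\nabla_{x_{u+w}(1)}a - \nabla_{x_u(1)}a\bigr) M_{u+w}(1)M_{u+w}^{-1}(s) F(x_{u+w}(s))$; second, $\nabla_{x_u(1)}a \,\bigl(M_{u+w}(1)M_{u+w}^{-1}(s) - M_u(1)M_u^{-1}(s)\bigr) F(x_{u+w}(s))$; third, $\nabla_{x_u(1)}a \, M_u(1)M_u^{-1}(s)\bigl(F(x_{u+w}(s)) - F(x_u(s))\bigr)$. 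Each factor appearing here is controlled: $|M_u|_2, |M_u^{-1}|_2 \le C_u$ uniformly for $\|u\|_{L^2}\le R$ by Lemma~\ref{lem:norm_M}; $|F(x_{u+w}(s))|_2$ is bounded on $K_R$; $|\nabla_{x_u(1)}a|$ is bounded on $K_R$. The increments are Lipschitz in $\|w\|_{L^2}$: $|x_{u+w}(s)-x_u(s)|_2 \le L_R\|w\|_{L^2}$ by Proposition~\ref{prop:cont_dep_traj}, hence $|\nabla_{x_{u+w}(1)}a - \nabla_{x_u(1)}a|_2 \le L_R'\|w\|_{L^2}$ by Lipschitzness of $\nabla a$ on $K_R$, and $|F(x_{u+w}(s)) - F(x_u(s))|_2 \le L\,|x_{u+w}(s)-x_u(s)|_2 \le L_R''\|w\|_{L^2}$; finally $|M_{u+w}(1)M_{u+w}^{-1}(s) - M_u(1)M_u^{-1}(s)|_2 \le L_R'''\|w\|_{L^2}$ comes from Lemma~\ref{lem:lip_dep_M} together with the triangular decomposition $M_{u+w}(1)M_{u+w}^{-1}(s) - M_u(1)M_u^{-1}(s) = (M_{u+w}(1)-M_u(1))M_{u+w}^{-1}(s) + M_u(1)(M_{u+w}^{-1}(s)-M_u^{-1}(s))$ and the uniform bounds of Lemma~\ref{lem:norm_M}. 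Combining these, each of the three groups is bounded by a constant depending only on $R$ times $\|w\|_{L^2}$, uniformly in $s\in[0,1]$.

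Putting the pieces together yields $|h_{u+w}(s)-h_u(s)|_2 \le L_R\|w\|_{L^2}$ for every $s\in[0,1]$, and since the interval $[0,1]$ has unit length this gives immediately $\|h_{u+w}-h_u\|_{L^2} \le L_R\|w\|_{L^2}$, which is \eqref{eq:lipsh_diff_cost}. I do not anticipate any serious obstacle: every ingredient has already been established in Section~\ref{sec:ass_not}, and the proof is essentially bookkeeping of which Lipschitz estimate controls which factor. The only mild care needed is to make sure the constant $C_u$ of Lemma~\ref{lem:norm_M} is made uniform over $\|u\|_{L^2}\le R$ (which it is, since $C_u = e^{\sqrt k L\|u\|_{L^2}} \le e^{\sqrt k L R}$) and that all constants produced by the cited propositions depend only on $R$ and not on the particular $u,w$.
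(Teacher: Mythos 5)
Your proof is correct and takes essentially the same route as the paper's: the paper writes $h_u = \sum_j \tfrac{\partial a(x_u(1))}{\partial x^j}\, g^j_{1,u}$ and telescopes on the two factors, citing the Lipschitz estimate \eqref{eq:lipsc_cont_rep_endpoint} for $g^j_{1,u}$ established in Proposition~\ref{prop:Lipsch_diff}, whereas you telescope directly on the three factors $\nabla_{x_u(1)}a$, $M_u(1)M_u^{-1}(s)$, $F(x_u(s))$ --- effectively re-deriving \eqref{eq:lipsc_cont_rep_endpoint} inline instead of citing it. The underlying ingredients (Lemmas~\ref{lem:C0_bound_traj}, \ref{lem:norm_M}, \ref{lem:lip_dep_M}, Proposition~\ref{prop:cont_dep_traj}, and the $C^2$-regularity of $a$ and the $F^i$) are identical, and the bookkeeping is sound.
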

\begin{proof}
Let us consider $R>0$.
In virtue of \eqref{eq:rep_diff_endcost_int},
it is sufficient to prove that
there exists $ L_R>0$ such that
\begin{equation} \label{eq:lip_dif_cost_ts}
\left|\left|
\frac{\partial a(x_{u+w}(1))}{\partial x^j}
g^j_{1,u+w} -
\frac{\partial a(x_{u}(1))}{\partial x^j}
g^j_{1,u} 
\right|\right|_{L^2}  \leq  L_R ||w||_{L^2}
\end{equation}
for every $j=1,\ldots,n$
and for every $u,w\in\U$ such that
$||u||_{L^2},||w||_{L^2}\leq R$. 
Lemma~\ref{lem:C0_bound_traj} implies that
there exists 
a compact set $K_R\subset\R^n$
depending only on $R$ such that
$x_u(1),x_{u+w}(1)\in K_R$
for every $u,w\in\U$ satisfying
$||u||_{L^2},||w||_{L^2}\leq R$.
Recalling that $a:\R^n\to\R_+$ is assumed to be
$C^2$-regular, we deduce that
there exists $L_R'>0$ such that
\begin{equation*} 
\left|
\frac{\partial a(y_1)}{\partial x^j}
-
\frac{\partial a(y_2)}{\partial x^j}
\right|_2\leq L_R' |y_1-y_2|_2
\end{equation*}
for every $y_1,y_2\in K_R$.
Moreover, combining the previous inequality with
\eqref{eq:cont_dep_traj}, we deduce that
there exists $L^1_R>0$ such that
\begin{equation} \label{eq:lip_dif_cost_p1}
\left|
\frac{\partial a(x_{u+w}(1))}{\partial x^j}
-
\frac{\partial a(x_u(1))}{\partial x^j}
\right|_2\leq L^1_R||w||_{L^2}
\end{equation}
for every $u,w\in\U$ satisfying
$||u||_{L^2},||w||_{L^2}\leq R$.
On the other hand, using 
\eqref{eq:lipsc_cont_rep_endpoint},
we have that there exists $L^2_R>0$ such that
\begin{equation} \label{eq:lip_dif_cost_p2}
\left|\left|
g^j_{1,u+w} -
g^j_{1,u} 
\right|\right|_{L^2}  \leq L_R^2 ||w||_{L^2}
\end{equation}
for every $u,w\in\U$ satisfying
$||u||_{L^2},||w||_{L^2}\leq R$.
Combining \eqref{eq:lip_dif_cost_p1}
and \eqref{eq:lip_dif_cost_p2}, and recalling
\eqref{eq:unif_est_rep_dif_endp},
the triangular inequality yields 
\eqref{eq:lip_dif_cost_ts}.
\end{proof}

\begin{remark}
In Lemma~\ref{lem:diff_MM} we have computed the 
Gateaux differential $d_u\mathcal{E}$
of the functional $\mathcal{E}:\U\to\R$.
The continuity of the map 
$u\mapsto d_u\mathcal{E}$ implies that
the Gateaux differential coincides with
the Fr\'echet differential (see, e.g., 
\cite[Theorem~1.9]{AP}). 
\end{remark}

Using Lemma~\ref{lem:diff_MM} and 
Remark~\ref{rmk:def_lambda}, we can provide
an expression for the representation
map $\G^\beta:\U\to\U$ defined
in \eqref{eq:rep_fun}. Indeed,
for every $\beta>0$ we have that
\begin{equation} \label{eq:rep_map_expr}
\G^\beta[u] =  u + \beta h_u, 
\end{equation}
where $h_u:[0,1]\to\R^k$ is defined in 
\eqref{eq:rep_diff_endcost_rep}.
Before proving that the solution of
the gradient flow \eqref{eq:grad_flow}
exists and is globally defined,
we report the statement of
a local existence and uniqueness result 
for the solution of ODEs in infinite-dimensional
spaces.

\begin{theorem}\label{thm:ODEs_banach}
Let $(E,||\cdot ||_E)$ be a Banach space, and,
for every $u_0\in E$ and $R>0$, let
$B_R(u_0)$ be the set
\[
B_R(u_0):=\{ u\in E:\, ||u-u_0||_E\leq R \}.
\]
Let $\mathcal{K}:E\to E$ be a continuous 
map such that
\begin{enumerate}
\item[(i)] $||\mathcal{K}[u]||_E\leq M$ for every
$u\in B_R(u_0)$;
\item[(ii)] $||\mathcal{K}[u_1]
-\mathcal{K}[u_2]||_E \leq L||u_1-u_2||_E$
for every $u_1,u_2\in B_R(u_0)$.
\end{enumerate}
For every $t_0\in\R$, let us consider the 
following Cauchy problem:
\begin{equation} \label{eq:cau_prob_banach}
\begin{cases}
\partial_t U_t = \mathcal{K}[U_t],\\
U_{t_0}=u_0.
\end{cases}
\end{equation}
Then, setting $\alpha := \frac{R}{M}$,
the equation \eqref{eq:cau_prob_banach}
admits a unique and continuously 
differentiable 
solution $t\mapsto U_t$, which is
defined for every $t\in \mathcal{I}:=
[t_0-\alpha,t_0+\alpha]$
and satisfies $U_t\in B_R(u_0)$ for
every $t\in\mathcal{I}$.
\end{theorem}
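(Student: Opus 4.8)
The plan is to recast the Cauchy problem \eqref{eq:cau_prob_banach} as a fixed-point equation and to apply the Banach contraction principle, following the classical treatment of ODEs in Banach spaces (see \cite{LL72}). First I would observe that, since $\mathcal{K}$ is continuous and we seek a continuous curve, a function $U:\mathcal{I}\to E$ with $U_{t_0}=u_0$ is a $C^1$ solution of \eqref{eq:cau_prob_banach} if and only if it satisfies the integral identity
\begin{equation*}
U_t = u_0 + \int_{t_0}^t \mathcal{K}[U_\tau]\,d\tau \qquad \text{for every } t\in\mathcal{I},
\end{equation*}
where the integral is the Riemann integral of a continuous $E$-valued function. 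This suggests introducing the set
\begin{equation*}
X := \{ U\in C^0(\mathcal{I},E) : U_t\in B_R(u_0) \text{ for every } t\in\mathcal{I} \},
\end{equation*}
which is a closed subset of the Banach space $C^0(\mathcal{I},E)$ endowed with the sup-norm, hence a complete metric space, and the map $\Phi:X\to C^0(\mathcal{I},E)$ given by $(\Phi[U])_t := u_0 + \int_{t_0}^t \mathcal{K}[U_\tau]\,d\tau$.

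The heart of the proof is then two verifications. The first is that $\Phi$ maps $X$ into itself: for $U\in X$ and $t\in\mathcal{I}$, hypothesis $(i)$ gives $\| (\Phi[U])_t - u_0 \|_E \leq \big| \int_{t_0}^t \| \mathcal{K}[U_\tau]\|_E\,d\tau \big| \leq M|t-t_0| \leq M\alpha = R$, so $\Phi[U]\in X$. The second — and the step I expect to be the main obstacle — is that $\Phi$ is a contraction: the plain sup-norm only yields a Lipschitz constant $L\alpha$ for $\Phi$, and $L\alpha$ need not be smaller than $1$. The standard remedy is to replace the sup-norm by the equivalent \emph{Bielecki-type} norm $\|U\|_\star := \sup_{t\in\mathcal{I}} e^{-2L|t-t_0|}\|U_t\|_E$ (assuming $L>0$, the case $L=0$ being trivial). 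Using $(ii)$ one estimates, for $U_1,U_2\in X$ and $t\in\mathcal{I}$,
\begin{equation*}
e^{-2L|t-t_0|} \big\| (\Phi[U_1])_t - (\Phi[U_2])_t \big\|_E \leq e^{-2L|t-t_0|} L \left| \int_{t_0}^t e^{2L|\tau-t_0|}\,d\tau \right| \|U_1-U_2\|_\star \leq \tfrac12 \|U_1-U_2\|_\star,
\end{equation*}
so that $\Phi$ is a contraction of constant $\tfrac12$ on $X$. (Alternatively, one could show that some iterate $\Phi^{\circ N}$ is a contraction for the unweighted norm.)

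With these facts established, the Banach fixed-point theorem produces a unique $U\in X$ with $\Phi[U]=U$, i.e., a unique curve in $X$ solving the integral identity. To finish, I would upgrade the regularity: since $U$ is continuous and $\mathcal{K}$ is continuous, $\tau\mapsto\mathcal{K}[U_\tau]$ is continuous on $\mathcal{I}$, hence $t\mapsto\int_{t_0}^t\mathcal{K}[U_\tau]\,d\tau$ is of class $C^1$ with derivative $\mathcal{K}[U_t]$; therefore $U$ is continuously differentiable and solves \eqref{eq:cau_prob_banach}, and by construction $U_t\in B_R(u_0)$ on $\mathcal{I}$. For uniqueness among all solutions — not only those a priori valued in $B_R(u_0)$ — I would argue that if $V:\mathcal{I}\to E$ solves \eqref{eq:cau_prob_banach}, then as long as $V_t\in B_R(u_0)$ one has $\|\partial_t V_t\|_E\leq M$ by $(i)$, so $\|V_t-u_0\|_E\leq M|t-t_0|\leq R$; a standard continuity/maximality argument then shows that $V$ cannot exit $B_R(u_0)$ on $\mathcal{I}$, whence $V\in X$ and $V=U$ by uniqueness of the fixed point.
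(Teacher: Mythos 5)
The paper's own proof of this theorem is a one-line citation to \cite[Theorem~5.1.1]{LL72}, so there is no internal argument to compare against; the paper simply outsources the work. What you provide is a complete, self-contained derivation via the Banach fixed-point theorem, and it is correct. The point worth highlighting is your choice of the Bielecki-type weighted norm $\|U\|_\star := \sup_{t\in\mathcal{I}} e^{-2L|t-t_0|}\|U_t\|_E$: a naive contraction estimate in the sup-norm would only yield a Lipschitz constant $L\alpha$ for $\Phi$, forcing the extra restriction $\alpha < 1/L$, whereas the weighted norm (or, equivalently, passing to an iterate of $\Phi$) is precisely what lets you reach the full interval $\alpha = R/M$ asserted in the statement. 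Your check that $\Phi$ maps $X$ to itself uses hypothesis $(i)$ exactly as it should, and the regularity upgrade from the integral equation back to $C^1$ is standard and correctly handled. The closing paragraph on uniqueness among \emph{all} $C^1$ solutions, not only those a priori valued in $B_R(u_0)$, is phrased loosely but the underlying connectedness argument is sound: on the maximal subinterval of $\mathcal{I}$ where $V$ stays in $B_R(u_0)$ one has $\|V_t-u_0\|_E \le M|t-t_0| \le R$ with strict inequality when $|t-t_0|<\alpha$, so $V$ cannot exit $B_R(u_0)$ before the endpoints, hence $V\in X$ and coincides with the fixed point. In short, your proof is a correct, genuinely self-contained alternative to the paper's reference, and the Bielecki trick is the right tool to obtain this sharp local existence interval.
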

\begin{proof}
This result descends directly from
\cite[Theorem~5.1.1]{LL72}.
\end{proof}

In the following result we show that,
whenever it exists, any solution
of \eqref{eq:grad_flow} is bounded
with respect to the $L^2$-norm.

\begin{lemma} \label{lem:bound_2_traj}
Let us assume that the vector fields
$F^1,\ldots,F^k$ defining the control system
\eqref{eq:ctrl_Cau} are $C^2$-regular, as
well as the function $a:\R^n\to\R_+$ 
designing the end-point cost.
For every initial datum $u_0\in \U$,
let $U:[0,\alpha)\to\U$ be a 
continuously differentiable solution
of  the Cauchy problem
\eqref{eq:grad_flow}.
Therefore, for every $R>0$ 
there exists $C_R>0$ such that, if
$||u_0||_{L^2}\leq R$, then 
\[
||U_t||_{L^2} \leq C_R
\]
for every $t\in [0,\alpha)$.  
\end{lemma}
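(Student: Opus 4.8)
The plan is to exploit the energy-dissipation structure of the gradient flow: along a solution of \eqref{eq:grad_flow} the functional $\Fb$ is non-increasing, which immediately controls $\|U_t\|_{L^2}$ because $a\geq0$. First I would record that, under the standing $C^2$-assumptions, $\Fb:\U\to\R_+$ is Fréchet differentiable with continuous differential. Indeed $\Fb(u)=\tfrac12\|u\|_{L^2}^2+\be\,\mathcal{E}(u)$; the quadratic term is smooth, while by Lemma~\ref{lem:diff_MM} the end-point cost $\mathcal{E}$ is Gateaux differentiable and, by Lemma~\ref{lem:lipsh_diff_endcost}, the map $u\mapsto d_u\mathcal{E}$ (represented by $h_u$) is locally Lipschitz, hence continuous; therefore the Gateaux and Fréchet differentials of $\mathcal{E}$ coincide. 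Consequently, if $t\mapsto U_t$ is a continuously differentiable solution of \eqref{eq:grad_flow} on $[0,\alpha)$, the scalar function $t\mapsto\Fb(U_t)$ is continuously differentiable, and the chain rule together with the representation \eqref{eq:rep_fun} and the equation \eqref{eq:grad_flow} gives
\[
\frac{d}{dt}\Fb(U_t)=d_{U_t}\Fb\big(\partial_t U_t\big)
=\big\langle \G^\be[U_t],\,-\G^\be[U_t]\big\rangle_{L^2}
=-\big\|\G^\be[U_t]\big\|_{L^2}^2\leq 0 .
\]
Hence $t\mapsto\Fb(U_t)$ is non-increasing, so $\Fb(U_t)\leq\Fb(u_0)$ for every $t\in[0,\alpha)$.

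Next I would convert this energy bound into the desired $L^2$-estimate. Since $a\geq0$, the definition \eqref{eq:cost} yields, for every $t\in[0,\alpha)$,
\[
\tfrac12\|U_t\|_{L^2}^2\leq\Fb(U_t)\leq\Fb(u_0)
=\tfrac12\|u_0\|_{L^2}^2+\be\,a\big(x_{u_0}(1)\big).
\]
Assuming $\|u_0\|_{L^2}\leq R$, Lemma~\ref{lem:C0_bound_traj} gives
$\|x_{u_0}\|_{C^0}\leq\rho_R:=\big(|x_0|_2+\sqrt{k}\,CR\big)e^{\sqrt{k}\,CR}$, so $x_{u_0}(1)$ lies in the compact ball $\overline{B_{\rho_R}(0)}\subset\R^n$; since $a$ is continuous it is bounded there by some constant $A_R>0$ depending only on $R$ (recall $x_0$ and $\be$ are fixed data of the problem). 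Therefore $\tfrac12\|U_t\|_{L^2}^2\leq\tfrac12 R^2+\be A_R$, and the claim follows with $C_R:=\sqrt{R^2+2\be A_R}$.

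The argument is essentially routine, and the only point requiring some care is the justification of the chain rule for $t\mapsto\Fb(U_t)$, i.e.\ that $\Fb$ is Fréchet (not merely Gateaux) differentiable along the $C^1$-curve $U$; this is precisely what the continuity of $u\mapsto d_u\Fb$, itself a consequence of Lemma~\ref{lem:lipsh_diff_endcost}, provides. Everything else reduces to the non-negativity of $a$, the Riesz representation \eqref{eq:rep_fun}, and the uniform $C^0$-bound on controlled trajectories from Lemma~\ref{lem:C0_bound_traj}.
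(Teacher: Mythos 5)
Your proof is correct and follows essentially the same route as the paper's: both use the dissipation identity $\frac{d}{dt}\Fb(U_t)=-\|\G^\be[U_t]\|_{L^2}^2\leq 0$ to conclude $\Fb(U_t)\leq\Fb(u_0)$, then drop the non-negative end-point term and bound $a(x_{u_0}(1))$ via Lemma~\ref{lem:C0_bound_traj} and continuity of $a$. The one place you are more explicit than the paper is the justification of the chain rule via Fr\'echet differentiability of $\Fb$, which is a welcome clarification but not a genuinely different argument.
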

\begin{proof}
Recalling \eqref{eq:grad_flow}
and using the fact that 
both $\Fb:\U\to\R_+$ and $t\mapsto U_t$
are differentiable, we observe that
\begin{equation} \label{eq:dec_fun_grflow}
\frac{d}{dt}\Fb(U_t)= 
d_{U_t}\Fb(\partial_tU_t)
=  \langle\G^\be[U_t],
\partial_t U_t \rangle_{L^2}
=-||\partial_t U_t||^2_{L^2}
\leq 0
\end{equation}
for every $t\in[0,\alpha)$, and this
immediately implies that
\begin{equation*} 
\Fb(U_t) \leq \Fb(U_0)
\end{equation*}
for every $t\in [0,\alpha)$.
Moreover, from the definition of the functional
$\Fb$ given in \eqref{eq:cost}
and recalling that the end-point term is 
non-negative,
it follows that
$\frac12 ||u||_{L^2}^2 \leq \Fb(u)$ for every 
$u\in \U$. 
Therefore, combining these facts,
 if $||u_0||_{L^2}\leq R$,
we deduce that
\[
\frac12 || U_t ||_{L^2}^2 \leq
\sup_{||u_0||_{L^2}\leq R}\Fb(u_0)\leq \frac12 R^2
+ \sup_{||u_0||_{L^2}\leq R}a(x_{u_0}(1))
\]
for every $t\in[0,\alpha)$.
Finally, using 
Lemma~\ref{lem:C0_bound_traj}
and the continuity of $a:\R^n\to\R_+$,
we deduce the thesis. 
\end{proof}

We are now in position to
prove that the gradient flow equation
\eqref{eq:grad_flow} admits a unique and globally
defined solution.

\begin{theorem}\label{thm:glob_def_gr_flow}
Let us assume that the vector fields
$F^1,\ldots,F^k$ defining the control system
\eqref{eq:ctrl_Cau} are $C^2$-regular, as
well as the function $a:\R^n\to\R_+$ 
designing the end-point cost.
For every $u_0\in\U$, let us consider 
the Cauchy problem \eqref{eq:grad_flow}
with initial datum $U_0=u_0$. 
Then, \eqref{eq:grad_flow} admits
a unique, globally defined 
and continuously differentiable 
solution
$U:[0,+\infty)\to\U$.  
\end{theorem}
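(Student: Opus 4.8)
The plan is to combine the abstract ODE theory recalled in Theorem~\ref{thm:ODEs_banach} with the a priori estimate of Lemma~\ref{lem:bound_2_traj}. First I would record that, by \eqref{eq:rep_map_expr}, the driving vector field of \eqref{eq:grad_flow} is $-\G^\be[u] = -(u + \be h_u)$, i.e.\ the identity plus the $\be$-multiple of the representation $h_u$ of $d_u\mathcal{E}$. The bound \eqref{eq:unif_est_rep_diff_endc} shows that $u\mapsto h_u$, hence $u\mapsto \G^\be[u]$, is bounded on every bounded subset of $\U$, and Lemma~\ref{lem:lipsh_diff_endcost} shows it is Lipschitz-continuous there; in particular $\G^\be$ is continuous on $\U$. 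Therefore, for any $u_0\in\U$ and any $R>0$, the map $\mathcal{K}:=-\G^\be$ satisfies hypotheses (i)--(ii) of Theorem~\ref{thm:ODEs_banach} on the ball $B_R(u_0)$, with constants $M=M(R,\|u_0\|_{L^2},\be)$ and $L=L(R,\|u_0\|_{L^2},\be)$. Applying that theorem yields a unique, continuously differentiable local solution of \eqref{eq:grad_flow} on $[0,\alpha]$ with $\alpha=R/M$.

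Next I would upgrade local to global existence by a continuation argument. Let $[0,T)$ be the maximal interval on which the solution $t\mapsto U_t$ of \eqref{eq:grad_flow} is defined; by local uniqueness these local pieces glue to a well-defined maximal solution, so it suffices to show $T=+\infty$. Suppose $T<\infty$. Put $R_0:=\|u_0\|_{L^2}$; Lemma~\ref{lem:bound_2_traj} provides $C_{R_0}>0$ with $\|U_t\|_{L^2}\le C_{R_0}$ for all $t\in[0,T)$. On the ball $\{u\in\U:\|u\|_{L^2}\le C_{R_0}+1\}$ the field $\G^\be$ is bounded, say by $M_0$, and Lipschitz; consequently, for any base time $t_0\in[0,T)$, since $B_1(U_{t_0})$ is contained in that ball, Theorem~\ref{thm:ODEs_banach} applied at $t_0$ with radius $1$ yields a solution on $[t_0,t_0+\alpha_0]$ with $\alpha_0:=1/M_0$ \emph{independent of $t_0$}. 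Choosing $t_0\in(T-\alpha_0,T)$ and invoking uniqueness to match this solution with $U$ on their common interval, we extend $U$ to $[0,t_0+\alpha_0]$ with $t_0+\alpha_0>T$, contradicting the maximality of $T$. Hence $T=+\infty$, the solution is defined and continuously differentiable on $[0,+\infty)$, and global uniqueness follows from the local uniqueness in Theorem~\ref{thm:ODEs_banach}.

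The main obstacle is precisely the uniform step size: to close the contradiction one needs the continuation length $\alpha_0$ not to degenerate as $t_0\to T$, and this is exactly what the a priori $L^2$-bound of Lemma~\ref{lem:bound_2_traj} secures, since it traps the whole trajectory in a fixed ball on which $\G^\be$ has a uniform bound and a uniform Lipschitz constant. Everything else --- the local well-posedness and the $C^1$-regularity of the solution --- is already packaged in Theorem~\ref{thm:ODEs_banach}, so the argument reduces to the standard maximal-interval continuation once the two ingredients (local Lipschitz continuity of $\G^\be$ from Lemma~\ref{lem:lipsh_diff_endcost}, and the energy-type bound from Lemma~\ref{lem:bound_2_traj}) are in hand.
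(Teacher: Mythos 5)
Your proposal is correct and follows essentially the same strategy as the paper: it combines Theorem~\ref{thm:ODEs_banach} for local existence and uniqueness with the a priori $L^2$-bound of Lemma~\ref{lem:bound_2_traj} to obtain a uniform continuation step. The only (cosmetic) difference is that you frame the globalization as a maximal-interval argument by contradiction, whereas the paper extends the solution explicitly by iterating from $t_0=\alpha/2$, $\bar u=U_{\alpha/2}$, and so on; these are standard equivalent formulations of the same continuation idea.
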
 
\begin{proof}
Let us fix the initial datum $u_0\in\U$, and let us 
set $R:= ||u_0||_{L^2}$. Let $C_{R}>0$ be the
constant provided by Lemma~\ref{lem:bound_2_traj}. 
Let us introduce $R':= C_{R}+1$ and let us 
consider
\begin{equation*} 
B_{R'}(0) := \{ 
u\in \U: ||u||_{L^2}\leq R' \}.
\end{equation*}
We observe that, for every $\bar u \in \U$
 such that $||\bar u ||_{L^2}
\leq C_{R}$, we have that
\begin{equation} \label{eq:incl_ball}
B_1(\bar u) \subset B_{R'}(0),
\end{equation}
where 
$B_1(\bar u):=\{ u \in \U: ||u -\bar u ||_{L^2}
\leq 1\}$.
Recalling that the vector field 
that generates the gradient flow 
\eqref{eq:grad_flow} has the form 
$\G^\be[u] = u + \be h_u$ for every
$u\in \U$, from \eqref{eq:unif_est_rep_diff_endc}
we deduce that there exists
$M_{R'}>0$ such that
\begin{equation} \label{eq:loc_bound_ODE}
||\G^\be[u]||_{L^2} \leq M_{R'}
\end{equation}
for every $u\in B_{R'}(0)$.
On the other hand, 
Lemma~\ref{lem:lipsh_diff_endcost}
implies that there exists $L_{R'}>0$
such that
\begin{equation} \label{eq:loc_lipsh_ODE}
||\G^\be[u_1]-\G^\be[u_2]||_{L^2} \leq L_{R'}
||u_1-u_2||_{L^2}
\end{equation}
for every $u_1,u_2\in B_{R'}(0)$.
Recalling the inclusion
\eqref{eq:incl_ball},
\eqref{eq:loc_bound_ODE}-\eqref{eq:loc_lipsh_ODE}
guarantee that
the hypotheses of 
Theorem~\ref{thm:ODEs_banach} are satisfied
in the ball $B_1(\bar u)$, for every
$\bar u$ satisfying $||\bar u||_{L^2}\leq C_R$.
This implies that, for every $t_0\in \R$, 
the evolution equation 
\begin{equation} \label{eq:grad_flow_aux}
\begin{cases}
\partial_t U_t = -\G^\be[U_t],\\
U_{t_0}= \bar u,
\end{cases}
\end{equation}
admits a unique and continuously differentiable
solution defined in the interval 
$[t_0-\alpha, t_0+\alpha]$, where we set 
$\alpha := \frac1{M_{R'}}$.
In particular, if we choose $t_0=0$ and 
$\bar u =u_0$ in \eqref{eq:grad_flow_aux},
we deduce that the gradient flow equation
\eqref{eq:grad_flow} with initial datum
$U_0=u_0$ admits a unique
and continuously differentiable solution
$t\mapsto U_t$ defined in
the interval $[0,\alpha]$.
We shall now prove that we can extend this local
solution to every positive time.
In virtue of Lemma~\ref{lem:bound_2_traj},
we obtain that the local solution 
$t\mapsto U_t$ satisfies 
\begin{equation} \label{eq:est_norm_max_sol}
||U_t||_{L^2}\leq C_{R}
\end{equation}
for every $t\in [0,\alpha]$.
Therefore, if we set $t_0= \frac\alpha2$
and $\bar u = U_{\frac\alpha2}$ in
\eqref{eq:grad_flow_aux}, recalling that, if
$||\bar u||_{L^2}\leq C_R$, then
\eqref{eq:grad_flow_aux} admits a unique
solution defined in $[t_0-\alpha, t_0+\alpha]$,
 it turns out that
the curve $t\mapsto U_t$ that solves 
\eqref{eq:grad_flow} with Cauchy datum
$U_0=u_0$ can be uniquely defined for every
$t\in [0,\frac32 \alpha]$. 
Since Lemma~\ref{lem:bound_2_traj} guarantees
that \eqref{eq:est_norm_max_sol} holds
whenever the solution $t\mapsto U_t$ exists, 
we can repeat recursively the argument and we can
extend the domain of the solution to the
whole half-line $[0,+\infty)$.
\end{proof}

We observe that Theorem~\ref{thm:ODEs_banach}
suggests that the solution of
the gradient flow equation
\eqref{eq:grad_flow} could be
 defined also for negative times.
In the following result we investigate this
fact.  

\begin{corollary}\label{cor:back_flow}
Under the same assumptions of 
Theorem~\ref{thm:glob_def_gr_flow},
for every $R_2>R_1>0$, 
there exists $\alpha>0$ such that,
if $||u_0||_{L^2}\leq R_1$, then the solution
$t\mapsto U_t$
of the Cauchy problem \eqref{eq:grad_flow}
with initial datum $U_0=u_0$ is defined 
for every $t\in[-\alpha,+\infty)$.
Moreover, 
$||U_t||_{L^2}\leq {R_2}$ for every 
$t\in [-\alpha,0]$.  
\end{corollary}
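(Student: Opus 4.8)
\textit{Proof plan.} The plan is to apply the local existence and uniqueness result of Theorem~\ref{thm:ODEs_banach} one more time, now choosing the initial time $t_0=0$ and selecting the reference ball so that the resulting existence interval is \emph{symmetric} about $0$ and has a length that does not depend on the particular initial datum $u_0$ (provided $\|u_0\|_{L^2}\le R_1$). Concatenating the backward branch so obtained with the globally defined forward solution furnished by Theorem~\ref{thm:glob_def_gr_flow} then yields a solution on $[-\alpha,+\infty)$.

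First I would fix $R_2>R_1>0$ and record the relevant uniform estimates on the closed ball $B_{R_2}(0)=\{u\in\U:\|u\|_{L^2}\le R_2\}$. Using the formula $\G^\be[u]=u+\be h_u$ from \eqref{eq:rep_map_expr} together with the uniform bound \eqref{eq:unif_est_rep_diff_endc}, there is $M_{R_2}>0$ with $\|\G^\be[u]\|_{L^2}\le M_{R_2}$ for all $u\in B_{R_2}(0)$; by Lemma~\ref{lem:lipsh_diff_endcost}, $\G^\be$ is $L_{R_2}$-Lipschitz on $B_{R_2}(0)$ for a suitable $L_{R_2}>0$. I would then set
\[
\alpha:=\frac{R_2-R_1}{M_{R_2}}>0,
\]
which is positive exactly because $R_2>R_1$ and is manifestly independent of $u_0$.

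Next, given any $u_0\in\U$ with $\|u_0\|_{L^2}\le R_1$, I would apply Theorem~\ref{thm:ODEs_banach} to $\mathcal{K}:=-\G^\be$ on the auxiliary ball $B_{R_2-R_1}(u_0)$. Since $\|u_0\|_{L^2}\le R_1$, this ball is contained in $B_{R_2}(0)$, so hypotheses (i)--(ii) hold there with $M=M_{R_2}$ and $L=L_{R_2}$; the theorem then produces a unique continuously differentiable solution of \eqref{eq:grad_flow} on $[-\alpha,\alpha]$ which stays inside $B_{R_2-R_1}(u_0)\subset B_{R_2}(0)$, hence satisfies $\|U_t\|_{L^2}\le R_2$ for every $t\in[-\alpha,\alpha]$, and in particular for $t\in[-\alpha,0]$.

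Finally I would glue this to the forward solution. Theorem~\ref{thm:glob_def_gr_flow} gives a unique solution of \eqref{eq:grad_flow} with $U_0=u_0$ on $[0,+\infty)$; by the uniqueness part of Theorem~\ref{thm:ODEs_banach} it must agree on $[0,\alpha]$ with the local solution just constructed, so the two branches concatenate into a curve on $[-\alpha,+\infty)$ solving \eqref{eq:grad_flow}, continuously differentiable also at $t=0$ since $\partial_tU_0=-\G^\be[u_0]$ from either side. The main (indeed essentially the only) point requiring attention is the uniformity of $\alpha$: a naive application of local existence would give an existence time depending on $u_0$, and the device here is to work on the single fixed ball $B_{R_2}(0)$ for \emph{all} admissible $u_0$, which is legitimate precisely because $R_2-R_1>0$ leaves room for a genuine auxiliary ball $B_{R_2-R_1}(u_0)$ around each such $u_0$.
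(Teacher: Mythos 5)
Your proposal is correct and follows essentially the same approach as the paper's proof: both work on the fixed ball $B_{R_2}(0)$ to obtain uniform constants $M_{R_2}$, $L_{R_2}$ via \eqref{eq:rep_map_expr}, \eqref{eq:unif_est_rep_diff_endc} and Lemma~\ref{lem:lipsh_diff_endcost}, then invoke Theorem~\ref{thm:ODEs_banach} on the auxiliary ball $B_{R_2-R_1}(u_0)\subset B_{R_2}(0)$ to obtain $\alpha=(R_2-R_1)/M_{R_2}$, independent of $u_0$. Your extra remark on gluing the backward branch to the forward solution via uniqueness is implicit in the paper but correctly spelled out.
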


\begin{proof}
The fact that the solutions are defined
for every positive time descends from 
Theorem~\ref{thm:glob_def_gr_flow}.
Recalling the expression of $\G^\be:\U\to\U$
provided by \eqref{eq:rep_map_expr},
from \eqref{eq:unif_est_rep_diff_endc}
it follows that, for every $R_2>0$, there
exists $M_{R_2}$ such that
\[
||\G^\be[u]||_{L^2} \leq M_{R_2}
\]
for every 
$u\in B_{R_2}(0):=\{u\in \U: ||u||_{L^2}\leq R_2 \}
.$
On the other hand, in virtue of
Lemma~\ref{lem:lipsh_diff_endcost},
we deduce that there exists $L_{R_2}$
such that
\[
||\G^\be[u_1]-\G^\be[u_2]||_{L^2} \leq L_{R_2}
||u_1-u_2||_{L^2}
\]
for every $u_1,u_2\in B_{R_2}(0)$.
We further observe that, for every
$u_0\in\U$ such that $||u_0||_{L^2}\leq R_1$,
we have the inclusion 
$B_R(u_0):=\{ u\in \U: ||u-u_0||\leq R \}
\subset B_{R_2}(0)$,
where we set $R:=R_2-R_1$.
Therefore,
the previous inequalities guarantee that
the hypotheses of Theorem~\ref{thm:ODEs_banach}
are satisfied in $B_R(u_0)$,
whenever $||u_0||_{L^2}\leq R_1$.
Finally, in virtue of 
Theorem~\ref{thm:ODEs_banach}
and the inclusion $B_R(u_0)\subset B_{R_2}(0)$,
we obtain the thesis with
\[
\alpha = \frac{R_2-R_1}{M_{R_2}}.
\]
\end{proof}

\end{section}

\begin{section}{Pre-compactness of gradient 
flow trajectories} \label{sec:comp}
In Section~\ref{sec:well_posed} we considered the
$\Fb:\U\to\R_+$ defined
in \eqref{eq:cost} and we proved that
the gradient flow equation \eqref{eq:grad_flow}
induced on $\U$ by $\Fb$ admits a unique solution
$U:[0,+\infty)\to\U$, for every Cauchy datum
$U_0 =u_0\in \U$.
The aim of the present section is to investigate
the pre-compactness in $\U$
of the gradient flow trajectories $t\mapsto U_t$.
In order to do that, we first show that,
under suitable regularity assumptions on the
vector fields
$F^1,\ldots,F^k$ and on the function
$a:\R^n\to\R_+$,
for every 
$t\geq 0$ the value of the solution
$U_t\in \U$ has the same
Sobolev regularity as the initial datum $u_0$.
The key-fact is that, when 
$F^1,\ldots,F^k$  are
$C^{r}$-regular with $r\geq 2$
and $a:\R^n\to\R_+$ is of class $C^2$, the map
$\G^\be:H^m([0,1],\R^k)\to H^m([0,1],\R^k)$
is locally Lipschitz continuous, for every
non-negative integer $m\leq r-1$.
This implies that the gradient flow equation 
\eqref{eq:grad_flow} can be studied 
as an evolution equation in the Hilbert space
$H^m([0,1],\R^k)$.

The following result concerns the curve
$\lambda_u:[0,1]\to(\R^n)^*$ defined in
\eqref{eq:def_lambda}.

\begin{lemma}\label{lem:est_lambda}
Let us assume that the vector fields
$F^1,\ldots,F^k$ defining the control system
\eqref{eq:ctrl_Cau} are $C^2$-regular, as
well as the function $a:\R^n\to\R_+$ 
designing the end-point cost.
For every $R>0$, there exists 
$C_R>0$ such that, for every 
$u\in \U$ satisfying $||u||_{L^2}\leq R$,
the following inequality holds 
\begin{equation} \label{eq:bound_lambda}
||\lambda_u||_{C^0}\leq C_R,
\end{equation}
where the curve $\lambda_u:[0,1]\to(\R^n)^*$
is defined as in \eqref{eq:def_lambda}.
Moreover, for every $R>0$, there exists 
$L_R>0$ such that, for every 
$u,w\in \U$ satisfying 
$||u||_{L^2},||w||_{L^2}\leq R$,
for the corresponding curves 
$\lambda_u,\lambda_{u+w}:[0,1]\to(\R^n)^*$
the following
inequality holds:
\begin{equation} \label{eq:lip_lambda}
||\lambda_{u+w} - \lambda_u||_{C^0}\leq L_R||w||_{L^2}.
\end{equation}
\end{lemma}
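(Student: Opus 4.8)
The plan is to base everything on the explicit representation $\lambda_u(s) = \nabla_{x_u(1)}a \cdot M_u(1) M_u^{-1}(s)$ recorded in \eqref{eq:def_lambda}, so that both assertions reduce to bounds on the three factors $\nabla_{x_u(1)}a$, $M_u(1)$ and $M_u^{-1}(s)$, each of which has already been controlled in the preceding results.

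For the uniform bound \eqref{eq:bound_lambda} I would argue as follows. By Lemma~\ref{lem:C0_bound_traj} there is a compact set $K_R\subset\R^n$ with $x_u(1)\in K_R$ whenever $||u||_{L^2}\leq R$; since $a$ is $C^1$ (in fact $C^2$), the gradient $\nabla a$ is bounded on $K_R$, say $|\nabla_{x_u(1)}a|_2\leq A_R$. Lemma~\ref{lem:norm_M} gives $|M_u(1)|_2\leq e^{\sqrt k L R}$ and $|M_u^{-1}(s)|_2\leq e^{\sqrt k L R}$ for every $s\in[0,1]$. Submultiplicativity of the operator norm then yields $|\lambda_u(s)|_2\leq A_R\, e^{2\sqrt k L R}=:C_R$, uniformly in $s$, which is \eqref{eq:bound_lambda}. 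Alternatively, one could apply the Gr\"onwall--Bellman inequality of Lemma~\ref{lem:Gron} directly to the integral form of the backward Cauchy problem \eqref{eq:def_lambda_cau}.

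For the Lipschitz estimate \eqref{eq:lip_lambda} I would use a telescoping decomposition of $\lambda_{u+w}(s)-\lambda_u(s)$: insert the intermediate quantities $\nabla_{x_u(1)}a\cdot M_{u+w}(1)M_{u+w}^{-1}(s)$ and $\nabla_{x_u(1)}a\cdot M_u(1)M_{u+w}^{-1}(s)$, and estimate the three resulting differences by the triangle inequality. The first difference is bounded by $|\nabla_{x_{u+w}(1)}a-\nabla_{x_u(1)}a|_2\,|M_{u+w}(1)|_2\,|M_{u+w}^{-1}(s)|_2$; since $a$ is $C^2$ its gradient is Lipschitz on $K_R$, and Proposition~\ref{prop:cont_dep_traj} bounds $|x_{u+w}(1)-x_u(1)|_2$ by a constant times $||w||_{L^2}$, so this term is at most $C_R^1\,||w||_{L^2}$. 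The second and third differences involve the factors $|M_{u+w}(1)-M_u(1)|_2$ and $|M_{u+w}^{-1}(s)-M_u^{-1}(s)|_2$, both bounded by $L_R\,||w||_{L^2}$ uniformly in $s$ by Lemma~\ref{lem:lip_dep_M}, while the remaining factors are uniformly bounded by $A_R$ and $e^{\sqrt k L R}$ as above. Summing the three contributions gives \eqref{eq:lip_lambda}.

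I do not anticipate a genuine obstacle here: the analytic substance is already contained in Lemma~\ref{lem:norm_M} and Lemma~\ref{lem:lip_dep_M} (which in turn rest on the first-order variation estimates of Subsection~\ref{subsec:1ord}), and what remains is bookkeeping with the triangle inequality. The only point requiring a little care in the Lipschitz part is that all the perturbed and unperturbed objects must live on a common domain for the bounds to apply --- e.g. both $x_{u+w}(1)$ and $x_u(1)$ should lie in the same compact set; since $||u+w||_{L^2}\leq 2R$ one simply invokes Lemma~\ref{lem:C0_bound_traj} with $2R$ in place of $R$ and absorbs the change into the constants.
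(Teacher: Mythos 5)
Your proof is correct and follows essentially the same route as the paper: bounding $\lambda_u$ via the product formula \eqref{eq:def_lambda} together with Lemma~\ref{lem:C0_bound_traj} and Lemma~\ref{lem:norm_M}, and proving the Lipschitz estimate by a telescoping/triangle-inequality decomposition combining Proposition~\ref{prop:cont_dep_traj}, the $C^2$-regularity of $a$, Lemma~\ref{lem:norm_M}, and Lemma~\ref{lem:lip_dep_M}. The remark about working on a common compact set (since $||u+w||_{L^2}\le 2R$) is the same care the paper takes implicitly inside the cited lemmas.
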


\begin{proof}
Recalling the definition of $\lambda_u$ given in
\eqref{eq:def_lambda}, we have that
\[
|\lambda_u(s)|_2 \leq |\nabla_{x_u(1)} a|_2
|M_u(1)|_2|M_u^{-1}(s)|_2
\]
for every $s\in[0,1]$, where 
$x_u:[0,1]\to\R^n$ is solution of 
\eqref{eq:ctrl_Cau} corresponding to the control
$u\in \U$.
Lemma~\ref{lem:C0_bound_traj} implies that
there exists $C'_R>0$ such that 
$|\nabla_{x_u(1)} a|_2\leq C'_R$
for every $u\in \U$ such that $||u||_{L^2}\leq R$.
Combining this with
\eqref{eq:norm_M_Minv_est}, we deduce
\eqref{eq:bound_lambda}.

To prove \eqref{eq:lip_lambda} we first observe 
that the $C^2$-regularity of $a:\R^n\to\R_+$
and Proposition~\ref{prop:cont_dep_traj}
imply that, for every $R>0$, there exists
$L'_R>0$ such that
\[
|\nabla_{x_{u+w}(1)}a - \nabla_{x_{u}(1)}a|_2
\leq L'_R ||w||_{L^2}
\]
for every $u,w\in \U$ such that 
$||u||_{L^2},||w||_{L^2}\leq R$.
Therefore,
recalling \eqref{eq:norm_M_Minv_est} and
\eqref{eq:lips_M}-\eqref{eq:lips_M_inv},
we deduce \eqref{eq:lip_lambda}
by applying the triangular inequality
to the identity
\[
|\lambda_{u+w}(s)-\lambda_u(s)|_2
= |\nabla_{x_{u+w}(1)}a\cdot M_{u+w}(1)
M_{u+w}^{-1}(s) - \nabla_{x_{u}(1)}a\cdot M_{u}(1)
M_{u}^{-1}(s)|_2
\]
for every $s\in[0,1]$.
\end{proof}

We recall the notion of {\it Lie bracket} of
vector fields. Let $G^1,G^2:\R^n\to\R^n$ be
two vector fields such that 
$G^1\in C^{r_1}(\R^n,\R^n)$ and
$G^2\in C^{r_2}(\R^n,\R^n)$, with 
$r_1,r_2\geq 1$, and let us set
$r:=\min(r_1,r_2)$. Then the 
{\it Lie bracket of $G^1$ and $G^2$} is
the vector field
$[G^1,G^2]:\R^n\to\R^n$ defined as follows:
\[
[G^1,G^2](y) = \frac{\partial G^2(y)}{\partial x}
G^1(y) - \frac{\partial G^1(y)}{\partial x}
G^2(y).
\]
We observe that $[G^1,G^2]\in C^{r-1}(\R^n,\R^n)$.
In the following result we establish 
some estimates for vector fields obtained
via iterated Lie brackets.

\begin{lemma} \label{lem:Lie_br}
Let us assume that the vector fields
$F^1,\ldots,F^k$ defining the control system
\eqref{eq:ctrl_Cau} are $C^{m}$-regular,
with $m\geq 2$.
For every compact $K\subset \R^n$, there
exist $C>0$ and $L>0$ such that, for every
$j_1,\ldots,j_m =1,\ldots,k$, 
the vector field
\[
G:= [F^{j_m},[\ldots,[ F^{j_3},[F^{j_2},F^{j_1}]]
\ldots]:\R^n\to\R^n
\]
satisfies the following inequalities:
\begin{equation} \label{eq:bound_lie_br}
|G(x)|_2 \leq C
\end{equation}
for every $x\in K$, and 
\begin{equation} \label{eq:lip_lie_br}
|G(x)-G(y)|_2 \leq L|x-y|_2
\end{equation}
for every $x,y\in K$.
\end{lemma}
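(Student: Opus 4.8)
The plan is to reduce both inequalities to the single structural fact that an $m$-fold nested Lie bracket of the $C^m$-regular vector fields $F^1,\ldots,F^k$ is itself of class $C^1$, and then to invoke only elementary properties of $C^1$ maps on compact sets.

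First I would determine the regularity of $G$ by an induction on the depth of the nesting. Set $H_1 := F^{j_1}$ and, for $\ell = 2,\ldots,m$, $H_\ell := [F^{j_\ell}, H_{\ell-1}]$, so that $H_m = G$. The remark preceding the statement---that the Lie bracket of a $C^{r_1}$ field and a $C^{r_2}$ field is of class $C^{\min(r_1,r_2)-1}$---gives $H_1 \in C^m$, $H_2 \in C^{m-1}$, and, inductively, $H_\ell \in C^{m-\ell+1}$; in particular $G = H_m \in C^1(\R^n,\R^n)$. Since $F^1,\ldots,F^k \in C^m$ with $m \geq 2$, all the partial derivatives entering the bracket formula at each stage are well defined, so this induction is legitimate.

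Next, fix a compact set $K \subset \R^n$ and pick a closed ball $\overline{B} \supseteq K$; it is compact and convex. Since $G$ is continuous, $C := \sup_{x \in \overline{B}} |G(x)|_2 < +\infty$, which establishes \eqref{eq:bound_lie_br}. Since $G$ is $C^1$, its Jacobian $\frac{\partial G}{\partial x}$ is continuous, so $L := \sup_{x \in \overline{B}} \left| \frac{\partial G(x)}{\partial x} \right|_2 < +\infty$; for $x,y \in K \subseteq \overline{B}$ the segment from $x$ to $y$ lies in $\overline{B}$ by convexity, and the mean value inequality $|G(x)-G(y)|_2 \leq \int_0^1 \left| \frac{\partial G(y+t(x-y))}{\partial x} \right|_2 |x-y|_2 \, dt \leq L |x-y|_2$ gives \eqref{eq:lip_lie_br}.

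Finally, as the multi-index $(j_1,\ldots,j_m)$ ranges over the finite set $\{1,\ldots,k\}^m$, taking the maximum of the finitely many constants $C$ and $L$ produced above yields constants valid uniformly in $j_1,\ldots,j_m$, as required. The only step that is not completely automatic is the regularity bookkeeping in the second paragraph: one has to observe that there are exactly $m-1$ bracket operations, so that starting from $C^m$-regularity one lands precisely at $C^1$ and never below it---this is the sole place where the hypothesis $m \geq 2$, and the matching between the number of indices $j_1,\ldots,j_m$ and the order of smoothness of the $F^i$'s, is used.
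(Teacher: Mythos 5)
Your proposal is correct and takes essentially the same route as the paper, which simply states that the claim follows immediately from $G$ being $C^1$-regular; you have filled in the regularity bookkeeping, the passage to a convex compact neighborhood, the mean value inequality, and the finiteness of the index set, all of which are the implicit content of that one-line proof.
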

\begin{proof}
The thesis follows immediately from the fact that
the vector field $G$ is 
$C^1$-regular.
\end{proof}

The next result is the cornerstone
this section.
It concerns the regularity of the function
$h_u:[0,1]\to\R^k$
introduced in \eqref{eq:rep_diff_endcost_rep}.
We recall that, for every
$u\in\U$, $h_u$ is the representation
of the differential
$d_u\mathcal{E}$ through the
scalar product of $\U$, where the functional
$\mathcal{E}:\U\to\R_+$ is defined as in \eqref{eq:end_cost}.
We recall the convention 
$H^0([0,1],\R^k)=L^2([0,1],\R^k)=\U$.

\begin{lemma} \label{lem:sob_est_rep_endcost}
Let us assume that the vector fields
$F^1,\ldots,F^k$ defining the control system
\eqref{eq:ctrl_Cau} are $C^{r}$-regular
with $r\geq 2$, and that
the function $a:\R^n\to\R_+$ 
designing the end-point cost is
$C^2$-regular.
For every $u\in \U$,
let $h_u:[0,1]\to\R^k$ be the representation of the
differential $d_u\mathcal{E}:\U\to\R$ provided by
\eqref{eq:rep_diff_endcost_rep}.
For every integer
$1\leq m\leq r-1$, 
if $u \in H^{m-1}([0,1],\R^k)\subset \U$,
then $h_u \in H^m([0,1],\R^k)$.

Moreover, for every integer
$1\leq m\leq r-1$, for every $R>0$
there exist $C_R^m>0$ and $L_R^m>0$ such that
\begin{equation} \label{eq:sob_est_rep_dM}
|| h_u ||_{H^m} \leq C_R^m
\end{equation}
for every $u\in H^{m-1}([0,1],\R^k)$
such that $||u||_{H^{m-1}} \leq R$,
and 
\begin{equation} \label{eq:lip_sob_est_dM}
||h_{u+w} - h_{u}||_{H^m}
\leq L^m_R||w||_{H^{m-1}}
\end{equation}
for every $u,w \in H^{m-1}([0,1],\R^k)$
such that $||u||_{H^{m-1}},
||w||_{H^{m-1}} \leq R$.
\end{lemma}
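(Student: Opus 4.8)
The plan is to argue by induction on $m$, proving simultaneously that $h_u\in H^m([0,1],\R^k)$ and the two estimates \eqref{eq:sob_est_rep_dM}--\eqref{eq:lip_sob_est_dM}. The computational backbone is the following identity: for any $C^1$ vector field $G:\R^n\to\R^n$ and any $u\in\U$, setting $\phi_u^G(s):=\lambda_u(s)\,G(x_u(s))$ with $x_u,\lambda_u$ as in \eqref{eq:ctrl_Cau} and \eqref{eq:def_lambda_cau}, the function $\phi_u^G$ is absolutely continuous and, by the Leibniz rule together with \eqref{eq:ctrl_Cau}--\eqref{eq:def_lambda_cau},
\[
\frac{d}{ds}\phi_u^G(s)=\sum_{i=1}^k u^i(s)\,\phi_u^{[F^i,G]}(s)\qquad\text{for a.e. }s\in[0,1],
\]
where $[F^i,G]$ is the Lie bracket. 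Since $h_u^j=\phi_u^{F^j}$ for $j=1,\dots,n$ (see \eqref{eq:rep_diff_endcost_rep}), iterating this identity shows --- as long as the vector fields involved remain $C^1$ --- that $(h_u^j)^{(\ell)}$ is a finite linear combination of terms $u^{i_1,(a_1)}(s)\cdots u^{i_p,(a_p)}(s)\,\phi_u^{G}(s)$ with $p\ge1$, $a_1+\cdots+a_p+p=\ell$, and $G$ an iterated Lie bracket of $F^1,\dots,F^k$ of length $p+1$. Since $F^1,\dots,F^k$ are $C^r$-regular with $r\ge m+1$, every bracket of length at most $m+1$ is $C^1$, so Lemma~\ref{lem:Lie_br} applies to all brackets appearing up to the $m$-th derivative; in particular each $\phi_u^G$ above is bounded in $C^0$, uniformly for $\|u\|_{L^2}\le R$, thanks to Lemma~\ref{lem:C0_bound_traj} and Lemma~\ref{lem:est_lambda}.

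For the base case $m=1$ the identity gives $\dot h_u^j=\sum_i u^i\phi_u^{[F^i,F^j]}$, hence $|\dot h_u(s)|_2\le C_R|u(s)|_1$ and, by \eqref{eq:norm_ineq}, $\|\dot h_u\|_{L^2}\le C_R'\|u\|_{L^2}$; together with the $C^0$-bound on $h_u$ this yields \eqref{eq:sob_est_rep_dM}. For \eqref{eq:lip_sob_est_dM} with $m=1$ one combines Lemma~\ref{lem:lipsh_diff_endcost} (which controls $\|h_{u+w}-h_u\|_{L^2}$) with a telescoping estimate of $\dot h_{u+w}^j-\dot h_u^j$ based on Proposition~\ref{prop:cont_dep_traj}, the Lipschitz bound on the brackets from Lemma~\ref{lem:Lie_br}, and the Lipschitz dependence of $\lambda_u$ from Lemma~\ref{lem:est_lambda}; the pointwise bound $|\dot h_{u+w}(s)-\dot h_u(s)|_2\le L_R\big(|u(s)|_1\|w\|_{L^2}+|w(s)|_1\big)$ then has $L^2$-norm $\le L_R'\|w\|_{L^2}$.

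For the inductive step, assume the claim for $m-1$ and that $m\le r-1$. Given $u\in H^{m-1}$ (so in particular $u\in H^{m-2}$), the induction gives $h_u\in H^{m-1}$ with the corresponding estimates, so it only remains to control $(h_u^j)^{(m)}$, and its increment $(h_{u+w}^j)^{(m)}-(h_u^j)^{(m)}$, in $L^2$. In a term $u^{i_1,(a_1)}\cdots u^{i_p,(a_p)}\phi_u^G$ of $(h_u^j)^{(m)}$ one has $a_1+\cdots+a_p=m-p$; hence if $p=1$ then $a_1=m-1$, while if $p\ge 2$ then every $a_q\le m-2$. In one dimension $H^{m-1-a}([0,1],\R^k)$ embeds continuously into $C^0$ (hence $L^\infty$) when $a\le m-2$, and into $L^2$ when $a\le m-1$, by \eqref{eq:sob_ineq_triv} and Theorem~\ref{thm:comp_sob_imm}; since moreover $\phi_u^G$ is bounded in $C^0$, each such product lies in $L^2$ with norm $\le C_R^m$ whenever $\|u\|_{H^{m-1}}\le R$, which proves \eqref{eq:sob_est_rep_dM}. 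For \eqref{eq:lip_sob_est_dM} one expands the difference of the level-$m$ formulas for $u+w$ and $u$, telescopes each product over its factors, and bounds a differenced factor $w^{i_q,(a_q)}$ in $L^2$ (if $a_q=m-1$) or in $L^\infty$ (if $a_q\le m-2$, via the $H^1$-embedding), and a differenced factor $\phi_{u+w}^G-\phi_u^G$ in $C^0$ by $L_R\|w\|_{L^2}$ (from Proposition~\ref{prop:cont_dep_traj}, Lemma~\ref{lem:Lie_br} and Lemma~\ref{lem:est_lambda}), the other factors being bounded as before; every resulting term is $\le L_R^m\|w\|_{H^{m-1}}$ in $L^2$, and together with the inductive bound on $\|h_{u+w}-h_u\|_{H^{m-1}}$ this closes the induction.

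I expect the main obstacle to be not any single estimate but the task of making the iterated-bracket formula for $(h_u^j)^{(\ell)}$ rigorous as an identity of Sobolev functions: at each level one must check that the displayed linear combination is itself weakly differentiable --- so that the next iteration is legitimate --- which reduces to verifying that each product of lower-order derivatives of $u$ with a $\phi_u^G$-type factor belongs to $H^1$, and this is precisely where the degree count (every $a_q\le m-2$ as long as $\ell\le m-1$) has to be married to the one-dimensional Sobolev embeddings of Theorem~\ref{thm:comp_sob_imm}. Once this structural fact is secured, the remaining quantitative estimates are routine bookkeeping with the lemmas of Section~\ref{sec:ass_not}.
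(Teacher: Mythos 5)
Your proof is correct and follows essentially the same strategy as the paper: both compute the Sobolev derivatives of $h_u^j=\lambda_u\cdot F^j(x_u)$ via the identity $\frac{d}{ds}\bigl[\lambda_u(s)\cdot G(x_u(s))\bigr]=\sum_{i}u^i(s)\,\lambda_u(s)\cdot[F^i,G](x_u(s))$, bound the resulting iterated-bracket terms via Lemma~\ref{lem:est_lambda}, Lemma~\ref{lem:C0_bound_traj} and Lemma~\ref{lem:Lie_br}, and obtain the Lipschitz estimates by telescoping with Proposition~\ref{prop:cont_dep_traj} and Lemma~\ref{lem:lip_dep_M}-type bounds. The paper carries out the computation explicitly only for $m=1,2$ and then asserts that ``the same strategy works for every $r\geq 4$''; your general formula for $(h_u^j)^{(\ell)}$ as a finite sum of terms $u^{i_1,(a_1)}\cdots u^{i_p,(a_p)}\,\phi_u^G$ with $a_1+\cdots+a_p+p=\ell$, together with the degree count forcing $a_q\le m-2$ whenever $p\ge2$ (so each $u^{(a_q)}\in H^1\hookrightarrow C^0$, while the lone $p=1$ term carries $u^{(m-1)}\in L^2$), is exactly the bookkeeping needed to make the paper's asserted induction rigorous, and the bracket-length count ($\le m+1$, hence $C^1$ under $r\ge m+1$) correctly matches the hypothesis. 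One slip: the index range in $h_u^j=\phi_u^{F^j}$ should be $j=1,\ldots,k$, not $n$.
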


\begin{proof}
It is sufficient to prove the thesis 
in the case $m=r-1$, for every integer
$r\geq2$. 
When $r=2,m=1$, we have to prove that, for every
$u\in \U$, the function
$h_u:[0,1]\to\R^k$ is in $H^1$. 
Recalling \eqref{eq:rep_diff_endcost_rep},
we have that, for every $j=1,\ldots,k$,
the $j$-th component of $h_u$ is given by the
product
\begin{equation*}
h_u^j(s) = \lambda_u(s)\cdot F^j(x_u(s))
\end{equation*}
for every $s\in[0,1]$,
where $\lambda_u:[0,1]\to(\R^n)^*$ was defined 
in \eqref{eq:def_lambda}.
Since both $s\mapsto\lambda_u(s)$ and
$s\mapsto F^j(x_u(s))$ are in $H^1$,
then their product is in $H^1$
as well (see, e.g., \cite[Corollary~8.10]{B11}).
Therefore, since $\lambda_u:[0,1]
\to(\R^n)^*$ solves \eqref{eq:def_lambda_cau},
we can compute
\begin{equation} \label{eq:expr_sob_der_1}
\dot h_u^j(s)  
 =  \lambda_u(s) \cdot
\sum_{i=1}^k [F^i,F^j]_{x_u(s)} u^i(s)
\end{equation}
for every $j=1,\ldots,k$ and for a.e.
$s\in[0,1]$.
In virtue of \eqref{eq:bound_lambda}, 
\eqref{eq:est_norm_traj} and
\eqref{eq:bound_lie_br},
for every $R>0$, 
there exists $C'_R>0$ such that 
\[
|\dot h^j_u(s)| \leq C'_R|u(s)|_1
\]
for a.e. $s\in[0,1]$, for
every $j=1,\ldots,k$ and for every
$u\in \U$ such that $||u||_{L^2}\leq R$.
Recalling \eqref{eq:norm_ineq}, 
we deduce that 
\begin{equation} \label{eq:sob_est_1}
||\dot h^j_u||_{L^2} \leq \sqrt k C'_R||u||_{L^2}
\end{equation}
for
every $j=1,\ldots,k$ and for every
$u\in \U$ such that $||u||_{L^2}\leq R$.
Finally, using
\eqref{eq:unif_est_rep_diff_endc},
we obtain that \eqref{eq:sob_est_rep_dM} holds
for $r=2,m=1$.
To prove \eqref{eq:lip_sob_est_dM}, we observe
that, for every $j=1,\ldots,k$
and for every $u,w\in \U$ we have
\begin{align*}
|\dot h_{u+w}^j(s) - \dot h^j_u(s)| &
\leq |\lambda_{u+w}(s)-\lambda_u(s)|_2
\sum_{i=1}^k \Big|[F^i,F^j]_{x_{u+w}(s)}\Big|_2 |u^i(s) + w^i(s)|\\
& \quad + |\lambda_u(s)|_2
\sum_{i=1}^k \Big|[F^i,F^j]_{x_{u+w}(s)}
-[F^i,F^j]_{x_{u}(s)}
\Big|_2 |u^i(s) + w^i(s)|\\
&\quad +|\lambda_u(s)|_2
\sum_{i=1}^k \Big|[F^i,F^j]_{x_{u}(s)}\Big|_2
|w^i(s)|
\end{align*}
for a.e. $s\in[0,1]$.
In virtue of Lemma~\ref{lem:est_lambda},
Lemma~\ref{lem:C0_bound_traj},
Proposition~\ref{prop:cont_dep_traj}
and Lemma~\ref{lem:Lie_br}, for every $R>0$
there exist $L_R'>0$ and $C_R''>0$
such that for every $j=1,\ldots,k$ 
the inequality
\[
|\dot h_{u+w}^j(s) - \dot h^j_u(s)| \leq
L'_R||w||_{L^2} |u(s)+w(s)|_1 
+ C_R''|w(s)|_1
\]
holds
for a.e. $s\in[0,1]$ and for every
$u,w\in \U$ satisfying $||u||_{L^2},||w||_{L^2}
\leq R$. Using \eqref{eq:norm_ineq}, the
previous inequality implies that there exists
$L''_R>0$ such that
\begin{equation} \label{eq:sob_lips_1}
||\dot h_{u+w}^j - \dot h^j_u||_{L^2}
\leq L''_R||w||_{L^2}
\end{equation}
for every $u,w\in \U$ such that 
 $||u||_{L^2},||w||_{L^2}\leq R$.
 Recalling \eqref{eq:lipsh_diff_cost},
we conclude that \eqref{eq:lip_sob_est_dM}
holds for $r=2,m=1$.

For $r=3,m=2$, we have to prove that,
for every $u\in H^1([0,1],\R^k)$, the
function $h_u$ belongs to $H^2([0,1],\R^k)$.
This follows if we show that $\dot h_u
\in H^1([0,1],\R^k)$ for 
for every $u\in H^1([0,1],\R^k)$.
Using the identity \eqref{eq:expr_sob_der_1},
we deduce that, whenever 
$u\in H^1([0,1],\R^k)$,
$\dot h_u^j$ is the product of three 
$H^1$-regular functions, for every
$j=1,\ldots,k$. Therefore, using again
\cite[Corollary~8.10]{B11}, we deduce that
$\dot h_u^j$ is $H^1$-regular as well.
From \eqref{eq:expr_sob_der_1}, for every
$j=1,\ldots,k$ we have that
\begin{equation*} \label{eq:expr_sob_der_2}
\ddot h_u^j(s) = \lambda_u(s)\cdot
\sum_{i_1,i_2=1}^k[F^{i_2},[F^{i_1},F^j]]_{x_u(s)}
u^{i_1}(s)u^{i_2}(s)
+ \lambda_u(s)\cdot \sum_{i_1=1}^k
[F^{i_1},F^j]_{x_u(s)} \dot u^{i_1}(s)
\end{equation*}
for a.e. $s\in[0,1]$.
Using Lemma~\ref{lem:est_lambda},
Lemma~\ref{lem:C0_bound_traj},
Lemma~\ref{lem:Lie_br}, and
recalling Theorem~\ref{thm:comp_sob_imm},
we obtain that, for every $R>0$ there exist
$C_R', C_R''>0$ such that
\begin{equation} \label{eq:sob_est_2}
||\ddot h_u^j(s)||_{L^2} \leq 
C_R' + C_R''||\dot u(s)||_{L^2}
\end{equation}
for a.e. $s\in[0,1]$, for every $j=1,\ldots,k$
and for every 
$u\in H^1([0,1],\R^k)$ such that $||u||_{H^1}
\leq R$.
Therefore, combining
\eqref{eq:unif_est_rep_diff_endc},
\eqref{eq:sob_est_1} and \eqref{eq:sob_est_2},
the inequality \eqref{eq:sob_est_rep_dM}
follows for the case $r=3,m=2$. 
In view of \eqref{eq:lipsh_diff_cost}
and \eqref{eq:sob_lips_1}, in order to 
prove \eqref{eq:lip_sob_est_dM} for
$r=3,m=2$ it is sufficient to
show that, for every
$R>0$ there exists $L_R'>0$ such that
\begin{equation} \label{eq:sob_lips_2}
||\ddot h_{u+w}^j -\ddot h_u^j||_{L^2} \leq
L_R'||w||_{H^1}
\end{equation} 
for every $u,w\in H^1([0,1],\R^k)$
such that $||u||_{H^1},||w||_{H^1}\leq R$.
The inequality \eqref{eq:sob_lips_2} can
be deduced with an argument based on the
triangular inequality,
 similarly as done in the case
$r=2, m=1$.

The same strategy works for every $r\geq 4$.
\end{proof}

The main consequence of 
Lemma~\ref{lem:sob_est_rep_endcost}  
is that, when the map $\G^\be:\U\to\U$
defined in \eqref{eq:rep_map_expr}
is restricted to $H^m([0,1],\R^k)$, 
the restriction $\G^\be:H^m([0,1],\R^k)
\to H^m([0,1],\R^k)$
is bounded and Lipschitz continuous 
on bounded sets.

\begin{proposition} \label{prop:rest_rep_map}
Let us assume that the vector fields
$F^1,\ldots,F^k$ defining the control system
\eqref{eq:ctrl_Cau} are $C^{r}$-regular
with $r\geq 2$, and that
the function $a:\R^n\to\R$ 
designing the end-point cost is
$C^2$-regular.
For every $\be>0$, let $\G^\be:\U\to\U$ be 
the representation map defined 
in \eqref{eq:rep_fun}. Then, for every
integer $1\leq m \leq r-1$, we have that
\begin{equation*} \label{eq:imag_rep_map}
\G^\be(H^m([0,1],\R^k)) \subset
H^m([0,1],\R^k).
\end{equation*}
Moreover, for every
integer $1\leq m \leq r-1$ and
 for every $R>0$ there exists $C_R^m>0$
such that
\begin{equation} \label{eq:bound_rep_rest}
||\G^\be[u]||_{H^m} \leq C^m_R
\end{equation}
for every $u\in H^m([0,1],\R^k)$
such that $||u||_{H^m}\leq R$, and there exists
 $L_R^m>0$ such that
\begin{equation} \label{eq:lipsc_rep_rest}
||\G^\be[u+w]-\G^\be[u]||_{H^m} \leq L^m_R
||w||_{H^m}
\end{equation}
for every $u,w\in H^m([0,1],\R^k)$
such that $||u||_{H^m}, ||w||_{H^m}\leq R$.
\end{proposition}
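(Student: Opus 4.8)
The plan is to reduce everything to Lemma~\ref{lem:sob_est_rep_endcost} by exploiting the explicit expression $\G^\be[u] = u + \be h_u$ recorded in \eqref{eq:rep_map_expr}. Once we know that the correspondence $u\mapsto h_u$ maps $H^{m-1}([0,1],\R^k)$ into $H^m([0,1],\R^k)$ and is bounded as well as locally Lipschitz-continuous in the relevant norms, the analogous statements for $\G^\be$ follow from the triangular inequality, together with the elementary inclusion \eqref{eq:sob_ineq_triv}, which tells us that the $H^{m-1}$-norm is dominated by the $H^m$-norm.

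First I would fix an integer $1\leq m\leq r-1$ and take $u\in H^m([0,1],\R^k)$. Since $H^m([0,1],\R^k)\subset H^{m-1}([0,1],\R^k)$, Lemma~\ref{lem:sob_est_rep_endcost} applies and yields $h_u\in H^m([0,1],\R^k)$; hence $\G^\be[u]=u+\be h_u\in H^m([0,1],\R^k)$, which gives the asserted inclusion. For the uniform bound, assume $||u||_{H^m}\leq R$. By \eqref{eq:sob_ineq_triv} we also have $||u||_{H^{m-1}}\leq R$, so \eqref{eq:sob_est_rep_dM} provides $||h_u||_{H^m}\leq C_R^m$, and therefore
\[
||\G^\be[u]||_{H^m} \leq ||u||_{H^m} + \be\,||h_u||_{H^m} \leq R + \be C_R^m,
\]
which is \eqref{eq:bound_rep_rest} after relabelling the constant.

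For the Lipschitz estimate, let $u,w\in H^m([0,1],\R^k)$ with $||u||_{H^m},||w||_{H^m}\leq R$; again $||u||_{H^{m-1}},||w||_{H^{m-1}}\leq R$ by \eqref{eq:sob_ineq_triv}, so \eqref{eq:lip_sob_est_dM} gives $||h_{u+w}-h_u||_{H^m}\leq L_R^m||w||_{H^{m-1}}\leq L_R^m||w||_{H^m}$, whence
\[
||\G^\be[u+w]-\G^\be[u]||_{H^m} = ||w + \be(h_{u+w}-h_u)||_{H^m} \leq (1+\be L_R^m)||w||_{H^m},
\]
which is \eqref{eq:lipsc_rep_rest}, once more after relabelling the constant.

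Since all the analytic substance — the gain of one derivative, the propagation of Sobolev regularity through the adjoint equation \eqref{eq:def_lambda_cau}, and the iterated Lie-bracket expansions of $\dot h_u$, $\ddot h_u$, and so on — has already been absorbed into Lemma~\ref{lem:sob_est_rep_endcost}, there is no real obstacle remaining at this stage. The only point that requires a moment's attention is the bookkeeping of the norms: one must pass from hypotheses stated in terms of $||\cdot||_{H^m}$ to the hypotheses of Lemma~\ref{lem:sob_est_rep_endcost}, which are formulated in terms of $||\cdot||_{H^{m-1}}$, and this is exactly what \eqref{eq:sob_ineq_triv} supplies.
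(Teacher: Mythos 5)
Your proposal is correct and is essentially the same argument the paper uses: the paper's proof simply observes that $\G^\be[u]=u+\be h_u$ and invokes Lemma~\ref{lem:sob_est_rep_endcost}, and you have spelled out the triangle-inequality and norm-bookkeeping details that the paper leaves implicit. Nothing more is needed.
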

\begin{proof}
Recalling that for every $u\in \U$ we have
\[
\G^\be[u] = u + \be h_u,
\]
the thesis follows directly from 
Lemma~\ref{lem:sob_est_rep_endcost}.
\end{proof}

Proposition~\ref{prop:rest_rep_map} 
suggests that,
when the vector fields
$F^1,\ldots,F^k$ are $C^r$-regular with $r\geq 2$,
we can restrict the 
gradient flow equation \eqref{eq:grad_flow}
to the Hilbert spaces $H^m([0,1],\R^k)$,
for every integer $1\leq m\leq r-1$. 
Namely, for every integer $1\leq m \leq r-1$, we
shall introduce the application
$\G_m^\be : H^m([0,1],\R^k) \to H^m([0,1],\R^k)$
defined as the restriction 
of $\G^\be:\U\to \U$ to $H^m$, i.e., 
\begin{equation} \label{eq:def_G_m}
\G_m^\be := \G^\be|_{H^m}. 
\end{equation}
For every integer $m\geq1$,
given a curve $U:(a, b)\to H^m([0,1],\R^k)$,
we say that it is (strongly) differentiable at 
$t_0\in(a,b)$ if there exists 
$u\in H^m([0,1],\R^k)$ such that
\begin{equation} \label{eq:derivative_curve_sob}
\lim_{t\to t_0} \left| \left|
\frac{U_t-U_{t_0}}{t-t_0}- u
\right| \right|_{H^m} =0.
\end{equation}
In this case, we use the notation
$\partial_tU_{t_0}:=u$. 
For every
$\ell=1,\ldots,m$
and for every $t\in(a,b)$,
we shall write
$U_t^{(\ell)}\in H^{m-\ell}([0,1],\R^k)$
to denote the $\ell$-th Sobolev derivative
of the function $U_t:s\mapsto U_t(s)$, i.e., 
\[
\int_0^1 \langle U_t(s) , \phi^{(\ell)}(s)
\rangle_{\R^k} \,ds
= (-1)^\ell
\int_0^1 \langle U_t^{(\ell)}(s), \phi(s)
\rangle_{\R^k}\,ds
\]
for every $\phi \in C^\infty_c([0,1],\R^k)$.
It is important to observe
that, for every order of derivation
$\ell=1,\ldots,m$, 
\eqref{eq:derivative_curve_sob} implies that
\begin{equation*}
\lim_{t\to t_0} \left| \left|
\frac{U^{(\ell)}_t-U^{(\ell)}_{t_0}}{t-t_0}
- u^{(\ell)} \right| \right|_{L^2} =0,
\end{equation*}
and we use the notation $\partial_t U^{(\ell)}_{t_0}
:=u^{(\ell)}$.
In particular, for every 
$\ell=1,\ldots,m$,
it follows that
\begin{equation} \label{eq:der_evol_sob_norm}
\frac{d}{dt} || U^{(\ell)}_t ||^2_{L^2}
= 2\int_0^1 
\langle \partial_t U^{(\ell)}_t(s),
U^{(\ell)}_t(s) \rangle_{\R^k}
\,ds = 2 \langle \partial_t U_t^{(\ell)},
U_t^{(\ell)}\rangle_{L^2}.
\end{equation}
In the next result we study the
following evolution equation
\begin{equation} \label{eq:grad_flow_sob}
\begin{cases}
\partial_t U_t = -\G^\be_m [U_t], \\
U_0 = u_0,
\end{cases}
\end{equation}
with $u_0\in H^m([0,1],\R^k)$, and where
$\G_m^\be:
H^m([0,1],\R^k)\to H^m([0,1],\R^k)$ is 
defined as in \eqref{eq:def_G_m}.
Before establishing the existence, uniqueness and
global definition result for the Cauchy 
problem \eqref{eq:grad_flow_sob}, we study
the evolution of the semi-norms
$||U^{(\ell)}_t||_{L^2}$ for $\ell =1,\ldots, m$
along its  solutions. 
 
\begin{lemma} \label{lem:sob_bound}
Let us assume that the vector fields
$F^1,\ldots,F^k$ defining the control system
\eqref{eq:ctrl_Cau} are $C^{r}$-regular
with $r\geq 2$, and that
the function $a:\R^n\to\R_+$ 
designing the end-point cost is
$C^2$-regular.
For every integer $1\leq m\leq r-1$
and for every inital datum 
$u_0\in H^m([0,1],\R^k)$, 
let $U:[0,\alpha)\to H^m([0,1],\R^k)$
be a continuously differentiable
solution of the Cauchy problem
\eqref{eq:grad_flow_sob}.
Therefore, for every $R>0$
there exists $C_{R}>0$
such that, if $||u_0||_{H^m}\leq R$, then
\begin{equation} \label{eq:sob_bound_lem}
||U_t||_{H^m} \leq C_{R}
\end{equation}  
for every $t\in[0,\alpha)$.
\end{lemma}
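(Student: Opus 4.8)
The plan is to bound, one at a time, the Sobolev seminorms $t\mapsto\|U_t^{(\ell)}\|_{L^2}$ for $\ell=0,1,\dots,m$, arguing by induction on $\ell$ and exploiting the regularity gain recorded in Lemma~\ref{lem:sob_est_rep_endcost}.

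For the base case $\ell=0$ I would note that $U:[0,\alpha)\to H^m([0,1],\R^k)$ is, a fortiori, a continuously differentiable solution of the Cauchy problem \eqref{eq:grad_flow} in $\U$, with $\|u_0\|_{L^2}\le\|u_0\|_{H^m}\le R$ by \eqref{eq:sob_ineq_triv}. Hence Lemma~\ref{lem:bound_2_traj} gives a constant $C_R^0>0$ depending only on $R$ such that $\|U_t^{(0)}\|_{L^2}=\|U_t\|_{L^2}\le C_R^0$ for every $t\in[0,\alpha)$. I emphasise that this first step genuinely uses the monotone decay of $\Fb$ along the flow (recorded in \eqref{eq:dec_fun_grflow}), since a bound on $\|h_u\|_{L^2}$ by itself is available only once $\|u\|_{L^2}$ is already controlled.

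For the inductive step, fix $1\le\ell\le m$ and suppose there is $C_R^{\ell-1}>0$ with $\|U_t\|_{H^{\ell-1}}\le C_R^{\ell-1}$ for every $t\in[0,\alpha)$. Recalling \eqref{eq:rep_map_expr} and \eqref{eq:def_G_m}, we have $\G_m^\be[u]=u+\be h_u$, so applying the $\ell$-th Sobolev derivative in $s$ to the equation \eqref{eq:grad_flow_sob} and using the commutation of the time and space derivatives established before \eqref{eq:der_evol_sob_norm} yields $\partial_tU_t^{(\ell)}=-U_t^{(\ell)}-\be\,h_{U_t}^{(\ell)}$. Then, by \eqref{eq:der_evol_sob_norm},
\[
\frac{d}{dt}\|U_t^{(\ell)}\|_{L^2}^2 = 2\langle\partial_tU_t^{(\ell)},U_t^{(\ell)}\rangle_{L^2} = -2\|U_t^{(\ell)}\|_{L^2}^2 - 2\be\langle h_{U_t}^{(\ell)},U_t^{(\ell)}\rangle_{L^2}.
\]
Since $1\le\ell\le m\le r-1$, Lemma~\ref{lem:sob_est_rep_endcost} applied with Sobolev exponent $\ell$ gives $h_{U_t}\in H^\ell([0,1],\R^k)$ together with $\|h_{U_t}^{(\ell)}\|_{L^2}\le\|h_{U_t}\|_{H^\ell}\le\tilde C_R$, where $\tilde C_R$ depends only on $R$ (through $C_R^{\ell-1}$). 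Cauchy--Schwarz and Young's inequality then give $\frac{d}{dt}\|U_t^{(\ell)}\|_{L^2}^2\le-\|U_t^{(\ell)}\|_{L^2}^2+\be^2\tilde C_R^2$; setting $y(t):=\|U_t^{(\ell)}\|_{L^2}^2$, integrating $\frac{d}{dt}(e^ty(t))\le e^t\be^2\tilde C_R^2$ over $[0,t]$ and using $\|u_0^{(\ell)}\|_{L^2}\le\|u_0\|_{H^m}\le R$ yields
\[
\|U_t^{(\ell)}\|_{L^2}^2 \le e^{-t}\|u_0^{(\ell)}\|_{L^2}^2 + \be^2\tilde C_R^2(1-e^{-t}) \le R^2 + \be^2\tilde C_R^2
\]
for all $t\in[0,\alpha)$. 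Combining this with the inductive hypothesis gives $\|U_t\|_{H^\ell}^2=\sum_{j=0}^\ell\|U_t^{(j)}\|_{L^2}^2\le(C_R^{\ell-1})^2+R^2+\be^2\tilde C_R^2=:(C_R^\ell)^2$, which closes the induction; taking $\ell=m$ produces \eqref{eq:sob_bound_lem} with $C_R:=C_R^m$.

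There is no serious obstacle once Lemma~\ref{lem:sob_est_rep_endcost} is at hand; the only point needing care is the bookkeeping of the constants. The scheme works precisely because that lemma controls $\|h_u\|_{H^\ell}$ in terms of $\|u\|_{H^{\ell-1}}$ — one derivative \emph{less} — so at each step of the induction the forcing term $\be\,h_{U_t}^{(\ell)}$ is already bounded by the previous step, and because the linear part of $\G_m^\be[u]=u+\be h_u$ supplies the dissipative term $-2\|U_t^{(\ell)}\|_{L^2}^2$, which lets us absorb the forcing and obtain a bound uniform in $t\in[0,\alpha)$ rather than one that could deteriorate as $t$ increases.
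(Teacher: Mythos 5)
Your proposal is correct and follows essentially the same route as the paper's proof: the base case is handled by Lemma~\ref{lem:bound_2_traj}, and each higher Sobolev seminorm is controlled by the differential inequality $\frac{d}{dt}\|U_t^{(\ell)}\|_{L^2}^2\le-\|U_t^{(\ell)}\|_{L^2}^2+\be^2\|h_{U_t}^{(\ell)}\|_{L^2}^2$ together with the one-derivative gain furnished by Lemma~\ref{lem:sob_est_rep_endcost}. The paper phrases the induction as being on $r$ with $m=r-1$, whereas you run it on the derivative order $\ell$ from $0$ to $m$, but the two bookkeepings are equivalent.
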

\begin{proof}
It is sufficient to prove the statement in the case
$r\geq 2, m= r-1$. We shall use an induction 
argument on $r$.

Let us consider the case $r=2, m=1$.
We observe that if $U:[0,\alpha)\to H^1([0,1]
,\R^k)$ is a solution of \eqref{eq:grad_flow_sob}
with $m=1$,
then it solves as well the Cauchy problem
\eqref{eq:grad_flow} in $\U$.
Therefore, recalling that $||u_0||_{L^2}\leq 
||u_0||_{H^1}$,
 in virtue of 
Lemma~\ref{lem:bound_2_traj}, 
for every $R>0$
there exists $C'_{R}>0$ such that,
if $||u_0||_{H^1}\leq R$, we have that
\begin{equation}\label{eq:bound_L2_aux}
||U_t||_{L^2} \leq C'_{R}
\end{equation}
for every $t\in [0,\alpha)$.
Hence it is sufficient to provide an upper bound
to the semi-norm $||U_t^{(1)}||_{L^2}$.
From \eqref{eq:der_evol_sob_norm} and
from the fact that $t\mapsto U_t$ solves
\eqref{eq:grad_flow_sob} for $m=1$,
it follows that
\begin{align*}
\frac{d}{dt}||U_t^{(1)}||_{L^2}^2
&=2\langle \partial_tU_t^{(1)},
U_t^{(1)}\rangle_{L^2} =-2\int_0^1 \left\langle
U^{(1)}_t(s) + \be h_{U_t}^{(1)}(s), U_t^{(1)}(s) 
\right\rangle_{\R^k} \,ds\\
& \leq -2||U_t^{(1)}||_{L^2}^2 
+ 2\be || h_{U_t}^{(1)}||_{L^2}||U_t^{(1)}||_{L^2}\\
&\leq - ||U_t^{(1)}||_{L^2}^2
+{\be^2}|| h_{U_t}^{(1)}||_{L^2}^2
\end{align*}
for every $t\in[0,\alpha)$, where 
$h_{U_t}:[0,1]\to\R^k$ is the absolutely continuous
curve defined in
\eqref{eq:rep_diff_endcost_rep}, and 
$h_{U_t}^{(1)}$ is its Sobolev derivative.
Combining \eqref{eq:bound_L2_aux} with
\eqref{eq:sob_est_rep_dM}, we obtain that 
there exists $C^1_{R}>0$ such that
\begin{equation*}
\frac{d}{dt}||U_t^{(1)}||_{L^2}^2
\leq 
-||U_t^{(1)}||_{L^2}^2
+\be^2 C^1_{R}
\end{equation*}
for every $t\in[0,\alpha)$.
This implies that
\[
||U^{(1)}_t||_{L^2} \leq \max\left\{ 
||U^{(1)}_0||_{L^2}, \be \sqrt{C_R^1} 
\right\}
\]
for every $t\in[0,\alpha)$.
This proves the thesis in the case $r=2, m=1$.

Let us prove the induction step.  
We shall prove the thesis in the case 
$r, m = r-1$. Let 
$U:[0,\alpha)\to H^m([0,1],\R^k)$
be a solution of \eqref{eq:grad_flow_sob}
with $m=r-1$. We observe that $t\mapsto U_t$
solves as well
\begin{equation*}
\begin{cases}
\partial_t U_t = -\G^\be_{m-1}[U_t], \\
U_0= u_0.
\end{cases}
\end{equation*}
Using the inductive hypothesis and that
$||u_0||_{H^{m-1}}\leq ||u_0||_{H^m}$, 
for every $R>0$ there exists $C'_R>0$ such that,
if $||u_0||_{H^m}\leq R$, we have that 
\begin{equation} \label{eq:bound_sob_induct}
||U_t||_{H^{m-1}} \leq C_{R}'
\end{equation}
for every $t\in[0,\alpha)$.
Hence it is sufficient to provide an upper bound
to the semi-norm $||U_t^{(m)}||_{L^2}$.
Recalling \eqref{eq:der_evol_sob_norm}
the same computation as before yields
\begin{align*}
\frac{d}{dt}||U_t^{(m)}||_{L^2}^2
&\leq - ||U_t^{(m)}||_{L^2}^2
+{\be^2}|| h_{U_t}^{(m)}||_{L^2}^2
\end{align*}
for every $t\in[0,\alpha)$. 
Combining \eqref{eq:bound_sob_induct} with
\eqref{eq:sob_est_rep_dM}, we obtain
that 
there exists $C^1_{R}>0$ such that
\begin{equation*}
\frac{d}{dt}||U_t^{(m)}||_{L^2}^2
\leq 
-||U_t^{(m)}||_{L^2}^2
+\be^2 C^1_{R}
\end{equation*}
for every $t\in[0,\alpha)$.
This yields \eqref{eq:sob_bound_lem}
for the inductive case $r, m=r-1$. 
\end{proof}

We are now in position to prove that
the Cauchy problem \eqref{eq:grad_flow_sob}
admits a unique and globally defined solution.
The proof of the following result
follows the lines of the proof of
Theorem~\ref{thm:glob_def_gr_flow}.

\begin{theorem}\label{thm:glob_def_sob}
Let us assume that the vector fields
$F^1,\ldots,F^k$ defining the control system
\eqref{eq:ctrl_Cau} are $C^{r}$-regular
with $r\geq 2$, and that
the function $a:\R^n\to\R_+$ 
designing the end-point cost is
$C^2$-regular.
Then, for every integer $1\leq m\leq r-1$
and for every inital datum 
$u_0\in H^m([0,1],\R^k)$, the evolution
equation \eqref{eq:grad_flow_sob}
admits a unique, globally defined
and continuously differentiable
solution $U:[0,+\infty)\to H^m([0,1],\R^k)$.
Moreover, there exists $C_{u_0}>0$ such that
\begin{equation} \label{eq:sob_est_sol}
||U_t||_{H^m} \leq C_{u_0}
\end{equation}
for every $t\in[0,+\infty)$.
\end{theorem}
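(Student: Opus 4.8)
The plan is to repeat, now in the Hilbert space $H^m([0,1],\R^k)$, the argument already used for Theorem~\ref{thm:glob_def_gr_flow}, the two new ingredients being Proposition~\ref{prop:rest_rep_map}, which lifts the boundedness and the local Lipschitz-continuity of $\G^\be$ from $\U$ to $H^m$, and Lemma~\ref{lem:sob_bound}, which supplies the a priori $H^m$-bound along any solution. First I would fix an integer $1\le m\le r-1$ and an initial datum $u_0\in H^m([0,1],\R^k)$, and set $R:=\|u_0\|_{H^m}$. Lemma~\ref{lem:sob_bound} provides a constant $C_R>0$ (which we may take $\ge R$) such that every continuously differentiable solution of \eqref{eq:grad_flow_sob} issued from a datum of $H^m$-norm at most $R$ remains in $\{v:\|v\|_{H^m}\le C_R\}$ throughout its interval of existence.

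Next I would set $R':=C_R+1$ and note that, for every $\bar u$ with $\|\bar u\|_{H^m}\le C_R$, the closed ball $B_1(\bar u):=\{v\in H^m([0,1],\R^k):\|v-\bar u\|_{H^m}\le1\}$ is contained in $B_{R'}(0):=\{v\in H^m([0,1],\R^k):\|v\|_{H^m}\le R'\}$. By \eqref{eq:bound_rep_rest} and \eqref{eq:lipsc_rep_rest} there exist $M_{R'}>0$ and $L_{R'}>0$ with $\|\G^\be_m[v]\|_{H^m}\le M_{R'}$ and $\|\G^\be_m[v_1]-\G^\be_m[v_2]\|_{H^m}\le L_{R'}\|v_1-v_2\|_{H^m}$ for all $v,v_1,v_2\in B_{R'}(0)$. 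Hence the hypotheses (i)--(ii) of Theorem~\ref{thm:ODEs_banach} are met on $B_1(\bar u)$, uniformly over such $\bar u$; with $\alpha:=1/M_{R'}$ the Cauchy problem $\partial_tU_t=-\G^\be_m[U_t]$, $U_{t_0}=\bar u$, admits for every $t_0\in\R$ a unique continuously differentiable solution on $[t_0-\alpha,t_0+\alpha]$ taking values in $B_1(\bar u)$.

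Finally I would iterate. Choosing $t_0=0$ and $\bar u=u_0$ gives a unique $C^1$ solution on $[0,\alpha]$, and by Lemma~\ref{lem:sob_bound} it satisfies $\|U_t\|_{H^m}\le C_R$ there; in particular $\|U_{\alpha/2}\|_{H^m}\le C_R$, so restarting from $t_0=\alpha/2$, $\bar u=U_{\alpha/2}$ extends the solution to $[0,\tfrac32\alpha]$, and, recursively, to every interval $[0,N\alpha]$ with $N\in\NN$. Since the a priori estimate $\|U_t\|_{H^m}\le C_R$ holds on the whole existence interval, and the continuation step $\alpha$ stays fixed (it depends only on $M_{R'}$), the solution is thus defined on all of $[0,+\infty)$; local uniqueness from Theorem~\ref{thm:ODEs_banach} propagates to the global solution by the standard maximality argument. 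The bound \eqref{eq:sob_est_sol} is then precisely the conclusion of Lemma~\ref{lem:sob_bound}, with $C_{u_0}:=C_R$. I do not expect a genuine obstacle at this stage: the analytic difficulty has already been absorbed into Lemma~\ref{lem:sob_est_rep_endcost} and Proposition~\ref{prop:rest_rep_map}, and the only point that must be respected is that the uniformity of $C_R$, hence of $M_{R'}$ and of $\alpha$, is exactly what prevents the continuation steps from collapsing.
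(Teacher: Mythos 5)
Your proposal is correct and follows essentially the same route as the paper: the paper's own proof simply states that the global existence follows from a verbatim repetition of the argument of Theorem~\ref{thm:glob_def_gr_flow}, now using Proposition~\ref{prop:rest_rep_map} in place of \eqref{eq:loc_bound_ODE}--\eqref{eq:loc_lipsh_ODE} and Lemma~\ref{lem:sob_bound} in place of Lemma~\ref{lem:bound_2_traj}. You have spelled out precisely that repetition, including the key point that the a priori $H^m$-bound keeps the continuation step $\alpha$ fixed, and correctly identified \eqref{eq:sob_est_sol} as an immediate consequence of Lemma~\ref{lem:sob_bound}.
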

\begin{proof}
It is sufficient to prove the statement in the case
$r\geq 2, m= r-1$.
In virtue of 
Lemma~\ref{lem:sob_bound} and
Proposition~\ref{prop:rest_rep_map},
the global existence of the solution
of \eqref{eq:grad_flow_sob} follows from
a {\it verbatim} repetition of the
argument of the proof of 
Theorem~\ref{thm:glob_def_gr_flow}.
Finally, \eqref{eq:sob_est_sol} descends
directly from Lemma~\ref{lem:sob_bound}.
\end{proof}

\begin{remark} \label{rmk:sol_sob}
We insist on the fact that, under the 
regularity assumptions of 
Theorem~\ref{thm:glob_def_sob}, 
if the initial datum $u_0$ is $H^m$-Sobolev
regular with $m\leq r-1$,
then the solution $U:[0,+\infty)\to\U$ of 
\eqref{eq:grad_flow} does coincide
with the solution of \eqref{eq:grad_flow_sob}.
In other words, let us assume that the hypotheses
of Theorem~\ref{thm:glob_def_sob} are met, and let
us consider the evolution equation
\begin{equation} \label{eq:grad_flow_rmk}
\begin{cases}
\partial_t U_t = -\G^\be [U_t],\\
U_0=u_0,
\end{cases}
\end{equation}
where $u_0\in H^m([0,1],\R^k)$, with $m\leq r-1$.
Owing to Theorem~\ref{thm:glob_def_gr_flow},
it follows that \eqref{eq:grad_flow_rmk} admits
a unique solution $U:[0,+\infty)\to\U$.
We claim that $t\mapsto U_t$ solves as well the
evolution equation
\begin{equation} \label{eq:grad_flow_sob_rmk}
\begin{cases}
\partial_t U_t = -\G^\be_m [U_t],\\
U_0 = u_0.
\end{cases}
\end{equation}
Indeed, Theorem~\ref{thm:glob_def_sob} implies
that \eqref{eq:grad_flow_sob_rmk} admits a
unique solution 
$\tilde U:[0,+\infty)\to H^m([0,1],\R^k)$.
Moreover, any solution of \eqref{eq:grad_flow_sob_rmk}
is also a solution of \eqref{eq:grad_flow_rmk},
therefore we must have 
$U_t = \tilde U_t$ for every $t\geq 0$
by the uniqueness of the solution of 
\eqref{eq:grad_flow_rmk}.
Hence, it follows that, 
if the controlled vector fields
$F^1,\ldots,F^k$ and the
function $a:\R^n\to\R_+$ are regular enough, 
then
for every $t\in[0,+\infty)$
each point of the gradient flow trajectory
$U_t$ solving \eqref{eq:grad_flow_rmk} has the same
Sobolev regularity as the initial datum. 
\end{remark}

We now prove a 
pre-compactness result
for the gradient flow trajectories.
We recall that
we use the convention 
$H^0=L^2$.

\begin{corollary} \label{cor:comp_traj}
Under the same assumptions of 
Theorem~\ref{thm:glob_def_sob},
let us consider $u_0\in H^m([0,1],\R^k)$
with the integer $m$ satisfying $1\leq m\leq r-1$.
Let $U:[0,+\infty)\to \U$ be the solution 
of the Cauchy problem \eqref{eq:grad_flow}
with initial condition $U_0 = u_0$. 
Then the trajectory $\{ U_t: t\geq 0 \}$
is pre-compact in $H^{m-1}([0,1],\R^k)$.
\end{corollary}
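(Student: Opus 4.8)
The plan is to reduce the statement entirely to two facts already in hand: the uniform $H^m$-bound on the trajectory coming from Theorem~\ref{thm:glob_def_sob}, and the compactness of the inclusion $H^m([0,1],\R^k)\hookrightarrow H^{m-1}([0,1],\R^k)$ established in Theorem~\ref{thm:comp_sob_imm}. In other words, I expect the corollary to be a one-line consequence of the work already done, and the only genuine content is to line up the hypotheses correctly.

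First I would observe that, since $u_0\in H^m([0,1],\R^k)$ with $1\leq m\leq r-1$, Remark~\ref{rmk:sol_sob} guarantees that the solution $U:[0,+\infty)\to\U$ of \eqref{eq:grad_flow} with $U_0=u_0$ coincides with the solution of the restricted equation \eqref{eq:grad_flow_sob} provided by Theorem~\ref{thm:glob_def_sob}. In particular $U_t\in H^m([0,1],\R^k)$ for every $t\geq 0$, and the estimate \eqref{eq:sob_est_sol} yields a constant $C_{u_0}>0$ such that $\|U_t\|_{H^m}\leq C_{u_0}$ for all $t\geq 0$. Hence the trajectory $\{U_t:t\geq 0\}$ is a bounded subset of $H^m([0,1],\R^k)$.

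Then I would simply invoke the compact inclusion \eqref{eq:comp_sob_sob}: since $H^m([0,1],\R^k)\hookrightarrow H^{m-1}([0,1],\R^k)$ is a compact operator, the image of the bounded set $\{U_t:t\geq 0\}$ is pre-compact in $H^{m-1}([0,1],\R^k)$ with respect to the strong topology. As the inclusion acts as the identity on functions, this is precisely the assertion that $\{U_t:t\geq0\}$ is pre-compact in $H^{m-1}([0,1],\R^k)$, which is the thesis.

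There is essentially no remaining obstacle at this stage: all the analytic work — the invariance of the Sobolev spaces under the flow and the uniform $H^m$-bound along trajectories — was carried out in Lemma~\ref{lem:sob_est_rep_endcost}, Proposition~\ref{prop:rest_rep_map}, Lemma~\ref{lem:sob_bound} and Theorem~\ref{thm:glob_def_sob}. The only point that requires a moment's care is the identification furnished by Remark~\ref{rmk:sol_sob}, namely that the $\U$-valued solution we start from is the very same curve for which the $H^m$-estimate was proved; once this is noted, the corollary follows immediately from the compact embedding \eqref{eq:comp_sob_sob}.
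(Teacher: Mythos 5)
Your argument is correct and follows the paper's own proof essentially verbatim: invoke Remark~\ref{rmk:sol_sob} to identify the $\U$-valued and $H^m$-valued solutions, use the uniform bound \eqref{eq:sob_est_sol} from Theorem~\ref{thm:glob_def_sob}, and conclude via the compact inclusion \eqref{eq:comp_sob_sob} of Theorem~\ref{thm:comp_sob_imm}. There is nothing to add.
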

\begin{proof}
As observed in 
Remark~\ref{rmk:sol_sob}, we have 
that  the solution $U:[0,+\infty)\to \U$
of \eqref{eq:grad_flow} satisfies
$U_t \in H^m([0,1],\R^k)$ for every
$t\geq 0$, and that
it solves \eqref{eq:grad_flow_sob} as well.
In virtue of Theorem~\ref{thm:comp_sob_imm}, the
inclusion $H^m([0,1],\R^k)
\hookrightarrow H^{m-1}([0,1],\R^k)$
is compact for every integer $m\geq 1$, 
therefore from \eqref{eq:sob_est_sol}
we deduce the thesis. 
\end{proof}
\end{section}

\begin{section}{Lojasiewicz-Simon inequality} \label{sec:Loj_Sim}
In this section we show that, when the controlled
vector fields $F^1,\ldots,F^k$ and the
function $a:\R^n\to\R_+$ are real-analytic,
then the functional $\Fb:\U\to\R_+$ satisfies
the Lojasiewicz-Simon inequality. This fact will
be of crucial importance for the convergence proof 
of the next section.

The first result on the 
Lojasiewicz inequality dates back to
1963, when in \cite{L63} Lojasiewicz 
 proved that, if $f:\R^d \to \R$ is a
real-analytic function, then for every $x\in \R^d$
there exist $\gamma \in (1,2]$, $C>0$ and 
$r>0$ such that
\begin{equation} \label{eq:loj_fin_dim}
|f(y)-f(x)| \leq C |\nabla f(y)|_2^\gamma
\end{equation}
for every $y\in \R^d$ satisfying $|y-r|_2<r$. 
This kind of inequalities are ubiquitous in
several branches of Mathematics. For example,
as suggested by Lojasiewicz in 
\cite{L63}, \eqref{eq:loj_fin_dim} can be employed
to study the convergence of the solutions of 
\[
\dot x = -\nabla f(x).
\] 
Another important application can be found in
\cite{P63}, where Polyak studied the convergence 
of the gradient descent algorithm for strongly
convex functions using a particular instance
of \eqref{eq:loj_fin_dim}, which is sometimes 
called Polyak-Lojasiewicz inequality.
In \cite{S83}, Simon extended \eqref{eq:loj_fin_dim}
to real-analytic functionals 
defined on Hilbert spaces, and he employed it
to establish convergence results for 
evolution equations. 
For further details, see also the lecture notes
\cite{S96}. The infinite-dimensional version
of \eqref{eq:loj_fin_dim} is often called 
Lojasiewicz-Simon inequality.
For a complete survey on the topic, we refer
the reader to the paper \cite{C03}.

In this section we prove that for every $\be>0$
the functional $\Fb:\U \to \R_+$ defined in 
\eqref{eq:cost} satisfies the Lojasiewicz-Simon 
inequality. 
We first show that, when the function
$a:\R^n\to\R_+$ involved in the definition of the
end-point cost \eqref{eq:end_cost}
and the controlled vector
fields $F^1,\ldots,F^k$ are real-analytic, 
the functional $\Fb:\U\to\R_+$ is real-analytic
as well, for every $\be>0$.
We recall the notion of  real-analytic 
application defined on a Banach space. 
For an introduction to the subject, see, 
for example, \cite{W65}.
\begin{defn} 
Let $E_1,E_2$ be Banach spaces, and let us consider
an application $\mathcal{T} : E_1 \to E_2$.
The function $\mathcal{T}$ is said to be {\it 
real-analytic at $e_0\in E_1$}
if for every $N\geq 1$ there exists a 
continuous and symmetric multi-linear application 
$l_N \in \mathscr{L}((E_1)^N,E_2)$ and if there exists $r>0$
such that, for every $e\in E_1$ satisfying
$||e-e_0||_{E_1}<r$, we have
\[
\sum_{N=1}^\infty ||l_N||_{\mathscr{L}((E_1)^N,E_2)}
\, ||e-e_0||_{E_1}^N <+\infty 
\]
and 
\[
\mathcal{T}(e)-\mathcal{T}(e_0) =
\sum_{N=1}^\infty l_N (e-e_0)^N,
\]
where, for every $N\geq 1$, we set 
$l_N(e-e_0)^N := l_N(e-e_0,\ldots,e-e_0)$.
Finally, $\mathcal{T}:E_1\to E_2$ is 
{\it real-analytic on $E_1$} if it is real-analytic
at every $e_0\in E_1$.
\end{defn}

In the next result we provide the conditions that
guarantee that $\Fb:\U\to\R$ is real-analytic.

\begin{proposition} \label{prop:real_anal_fun}
Let us assume that the vector fields 
$F^1,\ldots,F^k$ defining the control system
\eqref{eq:ctrl_Cau} are real-analytic, as well
as the function $a:\R^n\to\R_+$ designing the
end-point cost \eqref{eq:end_cost}.
Therefore, for every $\be>0$, 
the functional $\Fb:\U\to\R_+$
defined in \eqref{eq:cost} is
real-analytic.
\end{proposition}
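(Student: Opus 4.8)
The plan is to decompose $\Fb(u)=\frac12\|u\|_{L^2}^2+\beta\,\mathcal{E}(u)$, where $\mathcal{E}(u)=a(x_u(1))$ is the end-point cost \eqref{eq:end_cost}, and to treat the two summands separately. The first one is a continuous homogeneous polynomial of degree $2$ on the Hilbert space $\U$, hence real-analytic everywhere. Since the composition of real-analytic maps between Banach spaces is again real-analytic (see \cite{W65}) and $a:\R^n\to\R_+$ is real-analytic by assumption, it suffices to prove that the end-point map $P_1:\U\to\R^n$ of \eqref{eq:def_end_p_map} is real-analytic; then $\mathcal{E}=a\circ P_1$ is real-analytic, and so is $\Fb$.

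To establish the real-analyticity of $P_1$ I will first show that the solution operator $u\mapsto x_u$ is real-analytic as a map from $\U$ into $C^0([0,1],\R^n)$, and then compose it with the bounded linear evaluation $\mathrm{ev}_1:C^0([0,1],\R^n)\to\R^n$, $x\mapsto x(1)$. For the solution operator I will invoke the analytic implicit function theorem (see \cite{W65}) applied to the map $\Phi:\U\times C^0([0,1],\R^n)\to C^0([0,1],\R^n)$ defined by
\[
\Phi(u,x)(s):=x(s)-x_0-\int_0^s F(x(\tau))u(\tau)\,d\tau,
\]
whose zero locus is, by \eqref{eq:ctrl_Cau}, exactly the graph of $u\mapsto x_u$. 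Fix $u_0\in\U$. Two facts have to be checked: that $\Phi$ is real-analytic on a neighborhood of $(u_0,x_{u_0})$, and that the partial derivative $D_x\Phi(u_0,x_{u_0}):C^0([0,1],\R^n)\to C^0([0,1],\R^n)$ is a topological isomorphism. The second point is the easy one: a direct computation gives $D_x\Phi(u_0,x_{u_0})[y](s)=y(s)-\int_0^s A_{u_0}(\tau)y(\tau)\,d\tau$ with $A_{u_0}$ as in \eqref{eq:def_A}, and since $|A_{u_0}(\tau)|_2\le L\,|u_0(\tau)|_1$ by \eqref{eq:bound_jac_F}, the integral term is a Volterra operator with $L^1$-kernel, hence quasi-nilpotent; therefore $D_x\Phi(u_0,x_{u_0})=\mathrm{Id}-(\text{Volterra})$ is boundedly invertible through the Neumann series. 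This is of course the functional-analytic counterpart of the unique solvability of the linearized Cauchy problem, and is consistent with the existence of $M_{u_0}$ and $M_{u_0}^{-1}$ in \eqref{eq:def_M} and \eqref{eq:def_inv_M}.

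The first point carries the real content of the proof, and I expect it to be the main obstacle. Writing $\Phi(u,x)=x-x_0-\mathcal{I}\big(\mathcal{N}(x)\,u\big)$, where $\mathcal{I}[v](s):=\int_0^s v(\tau)\,d\tau$ is bounded linear, $(w,u)\mapsto wu$ is a bounded bilinear map into $L^1([0,1],\R^n)$, and $\mathcal{N}:x\mapsto F\circ x$ is the superposition operator generated by $F$, everything reduces to the real-analyticity of $\mathcal{N}$ as a map $C^0([0,1],\R^n)\to C^0([0,1],\R^{n\times k})$, since $x\mapsto x$ is linear and the remaining operations are bounded (multi)linear. This is precisely where real-analyticity of $F$ — rather than mere $C^\infty$-regularity — enters. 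By Lemma~\ref{lem:C0_bound_traj} the image $x_{u_0}([0,1])$ is compact, so $F$ extends to a holomorphic map on a complex neighborhood of it; consequently the Taylor expansions of $F$ about the points of a small compact $C^0$-neighborhood $K$ of $x_{u_0}([0,1])$ share a common positive radius of convergence $\rho$. For $\|x-x_{u_0}\|_{C^0}<\rho$ one then expands $F(x(s))$ in powers of $x(s)-x_{u_0}(s)$ pointwise in $s$; this produces a series of continuous symmetric multilinear operators on $C^0$ (the $N$-th one sending $h$ to $s\mapsto\frac1{N!}D^N F(x_{u_0}(s))[h(s),\ldots,h(s)]$, whose operator norm is $O(\rho^{-N})$) which converges in operator norm, showing that $\mathcal{N}$, and hence $\Phi$, is real-analytic near $(u_0,x_{u_0})$. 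The analytic implicit function theorem then yields a real-analytic branch $u\mapsto\psi(u)\in C^0([0,1],\R^n)$ defined near $u_0$ with $\psi(u_0)=x_{u_0}$ and $\Phi(u,\psi(u))\equiv0$; uniqueness of solutions of \eqref{eq:ctrl_Cau} forces $\psi(u)=x_u$ locally, so $u\mapsto x_u$ is real-analytic near $u_0$, and since $u_0$ was arbitrary it is real-analytic on $\U$. Composing with $\mathrm{ev}_1$ and with $a$, and adding the quadratic term, gives the real-analyticity of $\Fb$. As a sanity check, the first- and second-order terms of this expansion should reproduce the variations computed in Proposition~\ref{prop:diff_endpoint} and Proposition~\ref{prop:2ord_var}.
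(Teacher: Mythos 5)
Your proposal is correct, but it follows a genuinely different route from the paper's proof. Both start from the same decomposition $\Fb(u)=\tfrac12\|u\|_{L^2}^2+\beta\,\mathcal{E}(u)$ and reduce to showing that $\mathcal{E}=a\circ P_1$ is real-analytic; but where the paper disposes of this in two lines by citing the chronological-calculus literature --- \cite[Proposition~8.5]{ABB} for the smoothness of $P_1$ and the explicit form of its Taylor (Volterra) expansion, and \cite[Proposition~2.1]{AG} for the convergence of the Taylor series of $a\circ P_1$ when $F$ and $a$ are real-analytic --- you give a self-contained argument via the analytic implicit function theorem in Banach spaces. You set up the integral equation $\Phi(u,x)=0$ whose zero set is the graph of $u\mapsto x_u$, check that $D_x\Phi(u_0,x_{u_0})=\mathrm{Id}$ minus a quasi-nilpotent Volterra operator and hence an isomorphism of $C^0([0,1],\R^n)$ by the Neumann series, and reduce the real-analyticity of $\Phi$ to that of the Nemytskii operator $x\mapsto F\circ x$ on $C^0$, which you obtain from the real-analyticity of $F$ via a holomorphic extension and Cauchy estimates on a compact tube around $x_{u_0}([0,1])$; the analytic IFT then yields real-analyticity of $u\mapsto x_u$ in $C^0$, hence of $P_1$ after composing with evaluation at $s=1$. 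Your route is heavier but self-contained and delivers the slightly stronger intermediate conclusion that $P_1$ itself --- not merely $a\circ P_1$ --- is real-analytic, whereas the paper's route is brief at the cost of deferring the substantive work to the cited references.
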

\begin{proof}
Since $\Fb(u)=\frac12 ||u||_{L^2} + \be 
\mathcal{E}(u)$ for every $u\in\U$,
the proof reduces to show that the 
end-point cost
$\mathcal{E}:\U\to\R_+$ is real-analytic.
Recalling the definition of $\mathcal{E}$ given in
\eqref{eq:end_cost} and
the end-point map $P_1:\U\to\R^n$
introduced in
\eqref{eq:def_end_p_map}, we have that 
the former can be expressed
as the composition
\begin{equation*}
\mathcal{E}= a \circ P_1.
\end{equation*}
In the proof of \cite[Proposition~8.5]{ABB}
it is shown that $P_1$ is smooth as soon
as $F^1,\ldots,F^k$ are $C^\infty$-regular, and the
expression of the Taylor expansion of
$P_1$ at every $u\in\U$ is provided. In 
\cite[Proposition~2.1]{AG} it is proved that,
when $a:\R^n\to\R_+$ and the controlled vector
fields are real-analytic, the Taylor series of
$a\circ P_1$ is actually convergent.
\end{proof}

The previous result implies that
the differential $d\Fb:\U\to\U^*$ 
is real-analytic.

\begin{corollary} \label{cor:G_real_an}
Under the same assumptions as in
Proposition~\ref{prop:real_anal_fun},
for every $\be>0$ the  differential
$d\Fb:\U\to\U^*$ is real-analytic.
\end{corollary}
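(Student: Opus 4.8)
The plan is to invoke the general principle that the Fr\'echet differential of a real-analytic map between Banach spaces is again real-analytic, and then to specialise it to $\Fb$, whose real-analyticity has just been established in Proposition~\ref{prop:real_anal_fun}. Fix $u_0\in\U$ and let $r>0$ together with the symmetric continuous multilinear forms $l_N\in\mathscr{L}(\U^N,\R)$ be as in the definition of real-analyticity, so that
\[
\Fb(u)-\Fb(u_0)=\sum_{N=1}^\infty l_N(u-u_0)^N ,\qquad
\sum_{N=1}^\infty \|l_N\|_{\mathscr{L}(\U^N,\R)}\,\|u-u_0\|_{L^2}^N<+\infty
\]
for every $u$ with $\|u-u_0\|_{L^2}<r$. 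Differentiating the $N$-th monomial $u\mapsto l_N(u-u_0)^N$ along $v\in\U$ produces, by symmetry of $l_N$, the expression $N\,l_N\big((u-u_0)^{N-1},v\big)$; after reindexing, this suggests that $d\Fb:\U\to\U^*$ admits at $u_0$ the expansion whose $0$-th term is $d_{u_0}\Fb=l_1$ and whose $N$-th coefficient ($N\ge1$) is the multilinear map $m_N\in\mathscr{L}(\U^N,\U^*)$ defined by $m_N(v_1,\dots,v_N)(v):=(N+1)\,l_{N+1}(v_1,\dots,v_N,v)$.

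The first step is to check that this candidate series converges in the operator norm of $\U^*$ on a (possibly smaller) ball around $u_0$. One has $\|m_N\|_{\mathscr{L}(\U^N,\U^*)}=(N+1)\,\|l_{N+1}\|_{\mathscr{L}(\U^{N+1},\R)}$, and since multiplying the coefficients of a convergent power series by the polynomial factor $N+1$ does not shrink its radius of convergence, the summability of $\sum_N \|l_N\|\,\rho^N$ for every $\rho<r$ entails the summability of $\sum_N \|m_N\|\,\rho^N$ for every $\rho<r$. The second step is to verify that $d_{u_0}\Fb+\sum_{N\ge1} m_N(u-u_0)^N$ actually equals $d_u\Fb$: this is the standard term-by-term differentiation of a Banach-space power series, legitimate because the partial sums of the $\Fb$-series converge to $\Fb$ uniformly on balls $\overline B_\rho(u_0)$ with $\rho<r$ while their differentials converge uniformly to the above series (by the estimate just obtained), so the classical ``uniform limit of the derivatives'' theorem applies. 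The symmetry of each $m_N$ in its first $N$ slots is inherited from that of $l_{N+1}$, and its continuity as an element of $\mathscr{L}(\U^N,\U^*)$ is exactly the identity of norms recorded above; that $d_u\Fb$ is a well-defined element of $\U^*$ is already known from Section~\ref{sec:well_posed}. Since $u_0\in\U$ was arbitrary, this shows that $d\Fb$ is real-analytic on $\U$.

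I do not expect a serious obstacle: the only delicate point is the combinatorial rearrangement of the $(N+1)$-linear form $l_{N+1}$ into the operator-valued $N$-linear form $m_N$, together with the justification of the term-by-term differentiation — both classical, and both available in the literature on analytic maps between Banach spaces, e.g. \cite{W65}, so one may alternatively just cite this fact. Finally, one may note that, via the Riesz isometry, the same argument shows that the representation map $\G^\be:\U\to\U$ introduced in Section~\ref{sec:well_posed} is real-analytic as well.
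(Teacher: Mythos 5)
Your proposal is correct, and it is essentially the same argument as the paper's: the paper disposes of the claim in one line by citing \cite[Theorem~2, p.1078]{W65} for the general principle that the Fr\'echet derivative of a real-analytic map between Banach spaces is again real-analytic, while you instead spell out a proof of that principle (reindexing $l_{N+1}$ into $m_N$, checking that the factor $N+1$ preserves the radius of convergence, and invoking term-by-term differentiation). Since you yourself note that one may alternatively ``just cite this fact'' from \cite{W65}, the two proofs rest on the same underlying lemma; yours simply supplies the details.
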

\begin{proof}
Owing to Proposition~\ref{prop:real_anal_fun},
the functional $\Fb:\U\to\R_+$ is real-analytic.
Using this fact, the thesis follows from 
\cite[Theorem~2, p.1078]{W65}.
\end{proof} 

Another key-step in view of the 
Lojasiewicz-Simon inequality is the study of the
Hessian of the functional $\Fb:\U\to\R_+$.
In our framework, the Hessian of $\Fb$ at a point
$u\in\U$ is the bounded linear operator
$\mathrm{Hess}_u\Fb:\U\to\U$ that satisfies the 
identity:
\begin{equation} \label{eq:def_Hess}
\langle\mathrm{Hess}_u\Fb[v],w\rangle_{L^2}
= d^2_u \Fb(v,w)
\end{equation}
for every $v,w\in \U$, where 
$d_u^2\Fb:\U\times\U\to\R$ is the second
differential of $\Fb$ at the point $u$.
In the next proposition we prove that, for every 
$u\in\U$, $\mathrm{Hess}_u\Fb$ has 
finite-dimensional kernel.
We stress on the fact that, unlike the
other results of the present section, we do not
have to assume that $F^1,\ldots,F^k$ and
$a:\R^n\to\R_+$ are real-analytic
to study the kernel of
$\mathrm{Hess}_u\Fb$.

\begin{proposition} \label{prop:ker_fin_dim}
Let us assume that the vector fields $F^1,\ldots,
F^k$ defining the control system \eqref{eq:ctrl_Cau}
are $C^2$-regular, as well as the function
$a:\R^n\to\R_+$ defining the end-point cost 
\eqref{eq:end_cost}. For every $u\in \U$, let 
$\mathrm{Hess}_u\Fb:\U\to\U$ be the linear operator
that represents the second differential
$d^2_u\Fb:\U\times\U\to\R$ through the identity
\eqref{eq:def_Hess}.
Then, the the kernel of $\mathrm{Hess}_u\Fb$ is
finite-dimensional.
\end{proposition}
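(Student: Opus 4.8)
The plan is to show that $\mathrm{Hess}_u\Fb$ is a compact perturbation of the identity of $\U$, and then to invoke the spectral theory of compact operators. First I would expand the second differential. Since $\Fb(u)=\tfrac12\|u\|_{L^2}^2+\be\mathcal{E}(u)$ and $\mathcal{E}=a\circ P_1$, with $a$ of class $C^2$ and $P_1$ twice differentiable (Proposition~\ref{prop:second_diff_endp} and the subsequent remark), the chain rule yields, for every $u,v,w\in\U$,
\[
d_u^2\Fb(v,w) = \langle v,w\rangle_{L^2} + \be\,\nabla_{x_u(1)}a\cdot D_u^2P_1(v,w) + \be\left\langle \frac{\partial^2 a(x_u(1))}{\partial x^2}D_uP_1(v),\,D_uP_1(w)\right\rangle_{\R^n}.
\]

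Next I would represent each of the two $\be$-terms as a compact self-adjoint operator on $\U$. Taking $\nu:=\nabla_{x_u(1)}a$, Proposition~\ref{prop:comp_2nd_diff} together with the representation \eqref{eq:rep_bil_form} shows that the bilinear form $(v,w)\mapsto \nu\cdot D_u^2P_1(v,w)$ is represented through $\langle\cdot,\cdot\rangle_{L^2}$ by the compact self-adjoint operator $\N_u^{\nu}:\U\to\U$. For the remaining term, the crucial observation is that $D_uP_1:\U\to\R^n$ is bounded (Proposition~\ref{prop:rep_diff}) and has finite-dimensional range, hence is a finite-rank operator; therefore its adjoint $(D_uP_1)^*:\R^n\to\U$ with respect to the Hilbert structures of $\R^n$ and $\U$ is finite-rank as well, and
\[
\left\langle \frac{\partial^2 a(x_u(1))}{\partial x^2}D_uP_1(v),\,D_uP_1(w)\right\rangle_{\R^n} = \left\langle (D_uP_1)^*\,\frac{\partial^2 a(x_u(1))}{\partial x^2}\,D_uP_1(v),\,w\right\rangle_{L^2},
\]
so this last bilinear form is represented by the operator $(D_uP_1)^*\,\frac{\partial^2 a(x_u(1))}{\partial x^2}\,D_uP_1:\U\to\U$, which is finite-rank — in particular compact — and self-adjoint, because $\frac{\partial^2 a(x_u(1))}{\partial x^2}$ is a symmetric matrix. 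Combining these facts with \eqref{eq:def_Hess} gives
\[
\mathrm{Hess}_u\Fb = \mathrm{Id}_\U + K_u, \qquad K_u := \be\,\N_u^{\nabla_{x_u(1)}a} + \be\,(D_uP_1)^*\,\frac{\partial^2 a(x_u(1))}{\partial x^2}\,D_uP_1,
\]
where $K_u:\U\to\U$ is compact and self-adjoint.

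Finally, I would conclude by the Fredholm alternative: if $v\in\ker\mathrm{Hess}_u\Fb$, then $K_uv=-v$, so either $\ker\mathrm{Hess}_u\Fb=\{0\}$ or $-1$ is an eigenvalue of the compact operator $K_u$ and $\ker\mathrm{Hess}_u\Fb$ is contained in the corresponding eigenspace; since the eigenspaces of a compact operator associated to non-zero eigenvalues are finite-dimensional, $\ker\mathrm{Hess}_u\Fb$ is finite-dimensional in either case. I do not anticipate a genuine difficulty here: the only non-elementary ingredient, namely the compactness of $\N_u^\nu$, has already been established in Proposition~\ref{prop:comp_2nd_diff}, so the proof amounts to assembling that result with the elementary finite-rank structure inherited from the fact that the end-point map takes values in the finite-dimensional space $\R^n$.
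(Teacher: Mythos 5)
Your proof is correct and follows essentially the same route as the paper: decompose $d_u^2\Fb$ via the chain rule, represent the $D_u^2P_1$ term by the compact self-adjoint operator $\N_u^{\nu}$ from Proposition~\ref{prop:comp_2nd_diff} and the remaining term by the finite-rank operator $D_uP_1^*\circ\nabla^2_{x_u(1)}a\circ D_uP_1$, and then apply the Fredholm alternative to $\mathrm{Id}+\be\,\mathrm{Hess}_u\mathcal{E}$.
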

\begin{proof}
For every $u\in\U$ we have that
\begin{equation*}
d^2_u\Fb(v,w) = \langle v, w\rangle_{L^2} 
+ \be d_u^2\mathcal{E}(v,w)
\end{equation*}
for every $v,w\in \U$. Therefore, we are reduced to
study the second differential of the end-point cost
$\mathcal{E}:\U\to\R_+$. Recalling its definition in
\eqref{eq:end_cost} and applying the chain-rule,
we obtain that
\begin{equation} \label{eq:sec_diff_fun}
d_u^2\mathcal{E}(v,w) =     
\big[D_uP_1 (v)\big]^T \nabla_{x_u(1)}^2a 
\big[ D_uP_1(w) \big] + \big(\nabla_{x_u(1)}a\big)^T
\cdot D_u^2P_1(v,w),
\end{equation}
where $P_1:\U\to\R^n$ is the end-point map 
defined in \eqref{eq:def_end_p_map}, and where
the curve $x_u:[0,1]\to\R^n$ is the solution of 
\eqref{eq:ctrl_Cau} corresponding to the control
$u\in\U$.
We recall that, for every $y\in\R^n$,
we understand $\nabla_y a$ as a row vector.
Let us set $\nu_u := \big(\nabla_{x_u(1)}a\big)^T$ and 
$H_u:= \nabla_{x_u(1)}^2 a$, where 
$H_u:\R^n\to\R^n$ is the self-adjoint linear 
operator associated to the Hessian of
$a:\R^n\to\R_+$ at the point 
$x_u(1)$.
Therefore we can write
\begin{equation}
d_u^2\mathcal{E}(v,w) =
\langle \big( D_uP_1^* \circ H_u \circ   D_uP_1 
\big)[v] , w \rangle_{L^2}
+ \nu_u \cdot  D_u^2P_1(v,w)
\end{equation} 
for every $v,w\in \U$, where $D_uP_1^*:\R^n\to\U$
is the adjoint of the differential 
$D_uP_1:\U\to\R^n$. Moreover, recalling the 
definition of the linear operator 
$\N_u^\nu:\U\to\U$ given in \eqref{eq:rep_bil_form},
we have that 
\begin{equation*}
\nu_u \cdot  D_u^2P_1(v,w) =
\langle \N_u^{\nu_u}[v],w\rangle_{L^2}
\end{equation*}
for every $v,w\in\U$. Therefore, we obtain 
\begin{equation} \label{eq:Hess_endcost}
d_u^2\mathcal{E}(v,w) =
\langle \mathrm{Hess}_u\mathcal{E}[v],w\rangle_{L^2}
\end{equation}
for every $v,w\in\U$, where $\mathrm{Hess}_u
\mathcal{E}:\U\to\U$ is the linear operator
that satisfies the identity:
\begin{equation*} 
\mathrm{Hess}_u \mathcal{E}=
D_uP_1^* \circ H_u \circ   D_uP_1 + \N_u^{\nu_u}.
\end{equation*}
We observe that $\mathrm{Hess}_u \mathcal{E}$
is a self-adjoint compact operator. 
Indeed, $N_u^{\nu_u}$ is self-adjoint and compact
in virtue of Proposition~\ref{prop:comp_2nd_diff}, 
while $D_uP_1^* \circ H_u \circ   D_uP_1$ 
has finite-rank and it self-adjoint as well.
Combining \eqref{eq:sec_diff_fun} and 
\eqref{eq:Hess_endcost}, we deduce that
\begin{equation} \label{eq:Hess_fun}
\mathrm{Hess}_u\Fb = \mathrm{Id} + \be 
\mathrm{Hess}_u\mathcal{E},
\end{equation}
where $\mathrm{Id}:\U\to\U$ is the identity.
Finally, using the Fredholm alternative
(see, e.g., \cite[Theorem~6.6]{B11}), we
deduce that the kernel of 
$\mathrm{Hess}_u\Fb$ is finite-dimensional.
\end{proof}

We are now in position to prove that the functional
$\Fb:\U\to\R_+$ satisfies the Lojasiewicz-Simon
inequality.

\begin{theorem}\label{thm:loj_ineq}
Let us assume that the vector fields 
$F^1,\ldots,F^k$ defining the control system
\eqref{eq:ctrl_Cau} are real-analytic, as well as
the function $a:\R^n\to\R_+$ defining
end-point cost \eqref{eq:end_cost}.
For every $\be>0$ and for every $u\in\U$, there
exists $r>0$, $C>0$ and $\gamma\in (1,2]$ such that
\begin{equation}\label{eq:loj_sim_ineq}
|\Fb(v)-\Fb(u)| \leq C||d_v\Fb||_{\U^*}^\gamma
\end{equation}
for every $v\in \U$ such that $||v-u||_{L^2}< r$.
\end{theorem}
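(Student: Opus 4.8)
The plan is to deduce \eqref{eq:loj_sim_ineq} from the abstract Lojasiewicz--Simon inequality on Hilbert spaces (see, e.g., \cite{S83} and \cite[Corollary~3.11]{C03}), whose hypotheses, at a point $u\in\U$, are: (a) $\Fb$ is real-analytic in a neighborhood of $u$; (b) the gradient map $v\mapsto\G^\be[v]$ is real-analytic near $u$; and (c) the Hessian $\mathrm{Hess}_u\Fb:\U\to\U$ is a Fredholm operator of index zero. Under these conditions one obtains $r>0$, $C>0$ and $\theta\in(0,\tfrac12]$ with $|\Fb(v)-\Fb(u)|^{1-\theta}\leq C\,||d_v\Fb||_{\U^*}$ for $||v-u||_{L^2}<r$, which is \eqref{eq:loj_sim_ineq} with $\gamma=\tfrac1{1-\theta}\in(1,2]$.

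First I would dispose of the case in which $u$ is \emph{not} a critical point of $\Fb$. Since $\G^\be$ is continuous (indeed locally Lipschitz, by Lemma~\ref{lem:lipsh_diff_endcost}) and $d_u\Fb\neq 0$, there exist $c>0$ and $r_1>0$ such that $||d_v\Fb||_{\U^*}\geq c$ whenever $||v-u||_{L^2}<r_1$; by the continuity of $\Fb$ there is $r\in(0,r_1]$ such that $|\Fb(v)-\Fb(u)|\leq c^2$ for $||v-u||_{L^2}<r$. Then $|\Fb(v)-\Fb(u)|\leq c^2\leq ||d_v\Fb||_{\U^*}^2$ on that ball, so \eqref{eq:loj_sim_ineq} holds trivially with $\gamma=2$ and $C=1$. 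Hence it suffices to treat the case in which $u$ is a critical point.

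For $u$ a critical point I would verify the three hypotheses using the results of the preceding sections. Hypothesis (a) is exactly Proposition~\ref{prop:real_anal_fun}. For (b), note that $\G^\be=R^{-1}\circ d\Fb$, where $R:\U\to\U^*$ is the Riesz isometry, a bounded linear isomorphism; since $d\Fb:\U\to\U^*$ is real-analytic by Corollary~\ref{cor:G_real_an}, and composition with a bounded linear operator preserves real-analyticity, $\G^\be$ is real-analytic on $\U$. For (c), the Fréchet derivative of $\G^\be$ at $u$ is $\mathrm{Hess}_u\Fb$, and by \eqref{eq:Hess_fun} we have $\mathrm{Hess}_u\Fb=\mathrm{Id}+\be\,\mathrm{Hess}_u\mathcal{E}$ with $\mathrm{Hess}_u\mathcal{E}$ compact and self-adjoint (Proposition~\ref{prop:ker_fin_dim} and its proof). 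Thus $\mathrm{Hess}_u\Fb$ is a self-adjoint compact perturbation of the identity, hence Fredholm of index zero by the Fredholm alternative. Applying the abstract Lojasiewicz--Simon inequality completes the argument. The main obstacle is not a computation but a bookkeeping one: making sure the chosen abstract statement is applied in a form whose assumptions are precisely the ones prepared here (analyticity of the functional, analyticity of its Hilbert-space gradient, and Fredholmness of index zero of the Hessian), since all the genuine analytic content has already been established in Sections~\ref{sec:ass_not} and the present section.
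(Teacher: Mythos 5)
Your proposal is correct and follows essentially the same route as the paper: dispose of non-critical $u$ by the local lower bound on $||d_v\Fb||_{\U^*}$ combined with continuity of $\Fb$ (yielding $\gamma=2$), and for critical $u$ invoke the abstract Lojasiewicz--Simon inequality of \cite[Corollary~3.11]{C03}, whose hypotheses are verified by Proposition~\ref{prop:real_anal_fun}, Corollary~\ref{cor:G_real_an}, and Proposition~\ref{prop:ker_fin_dim} exactly as you describe. Your reformulation of the Hessian condition as ``Fredholm of index zero'' rather than ``finite-dimensional kernel'' is a harmless (and slightly sharper) paraphrase, since $\mathrm{Hess}_u\Fb=\mathrm{Id}+\beta\,\mathrm{Hess}_u\mathcal{E}$ with $\mathrm{Hess}_u\mathcal{E}$ compact self-adjoint.
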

\begin{proof}
If $u\in\U$ is not a critical point for $\Fb$, i.e.,
$d_u\Fb\neq 0$, then there exists $r_1>0$
and $\kappa>0$ such that
\begin{equation*}
||d_v\Fb||_{\U^*}^2 \geq \kappa
\end{equation*}
for every $v\in \U$ satisfying $||v-u||_{L^2}< r_1$.
On the other hand, by the continuity of $\Fb$,
we deduce that there exists $r_2>0$ such that
\begin{equation*}
|\Fb(v)-\Fb(u)|\leq\kappa
\end{equation*} 
for every $v\in\U$ satisfying $||v-u||_{L^2}< r_2$.
Combining the previous inequalities
and taking $r:=\min\{r_1,r_2 \}$, we deduce that,
when $d_u\Fb \neq 0$, \eqref{eq:loj_sim_ineq}
holds with $\gamma=2$.

The inequality \eqref{eq:loj_sim_ineq} in the case
$d_u\Fb = 0$ follows from 
\cite[Corollary~3.11]{C03}. We shall now verify the
assumptions of this result.
First of all, \cite[Hypothesis~3.2]{C03} 
is satisfied, being $\U$ an Hilbert space.
Moreover, \cite[Hypothesis~3.4]{C03} follows
by choosing $W=\U^*$. 
In addition, we recall that $d\Fb:\U\to\U^*$ is 
real-analytic
in virtue of Corollary~\ref{cor:G_real_an},
and that $\mathrm{Hess}_u\Fb$ has finite-dimensional
kernel owing to Proposition~\ref{prop:ker_fin_dim}.
These facts imply that the conditions
(1)--(4) of \cite[Corollary~3.11]{C03} are
verified if we set $X=\U$ and $Y=\U^*$.
\end{proof}
 
\end{section}

\begin{section}{Convergence of the gradient flow}
\label{sec:conv}
In this section we show that 
the gradient flow trajectory 
$U:[0+\infty)\to\U$ that solves 
\eqref{eq:grad_flow} is convergent to a critical
point of the functional $\Fb:\U\to\R$, provided that
the Cauchy datum $U_0 = u_0$ satisfies
$u_0\in H^1([0,1],\R^k)\subset \U$.
The Lojasiewicz-Simon inequality established in
Theorem~\ref{thm:loj_ineq} will play a crucial role
in the proof of the convergence result. 
Indeed, we use this inequality to show that
the trajectories with
Sobolev-regular initial datum
have finite length. This approach was 
first proposed in 
\cite{L63} in the finite-dimensional
framework, and in 
\cite{S83} for evolution PDEs.
In order to satisfy the assumptions of
Theorem~\ref{thm:loj_ineq},
we need to assume throughout the section
that the controlled vector
fields $F^1,\ldots,F^k$ and the function
$a:\R^n\to\R_+$ are real-analytic.

We first recall the notion of the
Riemann integral of a 
curve that takes values in $\U$. For 
general statements and further 
details, we refer the reader to 
\cite[Section~1.3]{LL72}.
Let us consider a continuous curve 
$V:[a,b]\to\U$. Therefore, using
\cite[Theorem~1.3.1]{LL72}, we can define
\begin{equation*}
\int_a^b V_t\,dt := \lim_{n\to\infty}
\frac1n \sum_{k=0}^{n-1} V_{\frac{b-a}{n}k}.
\end{equation*} 
We immediately observe that the following inequality
holds:
\begin{equation} \label{eq:norm_int_Riem}
\left|\left| \int_a^b V_t\,dt \right|\right|_{L^2}
\leq \int_a^b ||V_t||_{L^2} \,dt.
\end{equation}
Moreover, \cite[Theorem~1.3.4]{LL72}
guarantees that,
 if the curve $V:[a,b]\to\U$ is continuously
differentiable, then we have:
\begin{equation}\label{eq:fund_th_calc_Riem}
V_{b} -V_{a} = \int_{a}^{b} \partial_t V_\theta\,
d\theta,
\end{equation}
where $\partial_t V_\theta$ is
the derivative of the curve $t\mapsto V_t$
defined as in \eqref{eq:def_der_curv}
and computed at the instant $\theta\in[a,b]$.
Finally, combining
\eqref{eq:fund_th_calc_Riem} and 
\eqref{eq:norm_int_Riem}, we deduce that
\begin{equation}\label{eq:def_length}
||V_{b}-V_{a}||_{L^2} \leq \int_{a}^{b}
||\partial_t V_\theta||_{L^2}\, d\theta.
\end{equation}
We refer to the quantity at the right-hand side
of \eqref{eq:def_length} as {\it the length of the
continuously differentiable curve $V:[a,b]\to\U$}.

Let $U:[0,+\infty)\to\U$ be the solution of the
gradient flow equation \eqref{eq:grad_flow}
with initial datum $u_0\in \U$.
We say that $u_\infty\in\U$ is a {\it limiting
point} for the curve $t\mapsto U_t$ if there
exists a sequence 
$(t_j)_{j\geq 1}$ such that
$t_j\to+\infty$ and 
$||U_{t_j}-u_\infty||_{L^2}\to 0$ as $j\to\infty$.
In the next result we study the length of 
$t\mapsto U_t$ in a neighborhood of a
limiting point.

\begin{proposition} \label{prop:len_omega}
Let us assume that the vector fields 
$F^1,\ldots,F^k$ defining the control system
\eqref{eq:ctrl_Cau} are real-analytic, as well
as the function $a:\R^n\to\R_+$ designing
the end-point cost.
Let $U:[0,+\infty)\to\U$ be the solution of
the Cauchy problem
\eqref{eq:grad_flow} with initial datum
$U_0=u_0$, and let 
$u_\infty \in \U$ be any of its limiting points.
Then there exists $r>0$
such that the portion of the curve that lies in
$B_{r}(u_\infty)$ has finite length, i.e.,
\begin{equation} \label{eq:loc_fin_len}
\int_{\mathcal{I}}||\partial_t U_\theta||_{L^2} \,
d\theta <\infty,
\end{equation}
where $\mathcal{I}:=\{ t\geq 0: U_t
\in B_{r}(u_\infty) \}$, and
$
B_r(u_\infty):=\{ u\in\U: ||u-u_\infty||_{L^2}<r \}.
$
\end{proposition}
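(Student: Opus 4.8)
The plan is to derive \eqref{eq:loc_fin_len} from the Łojasiewicz--Simon inequality of Theorem~\ref{thm:loj_ineq} applied at the limiting point $u_\infty$, combined with the energy identity \eqref{eq:dec_fun_grflow}, following the classical argument introduced in \cite{L63,S83}. First I would record that, by \eqref{eq:dec_fun_grflow}, the function $t\mapsto\Fb(U_t)$ is non-increasing; since $u_\infty$ is a limiting point and $\Fb$ is continuous (being real-analytic by Proposition~\ref{prop:real_anal_fun}), one has $\Fb(U_{t_j})\to\Fb(u_\infty)=:\ell$ along the sequence $t_j\to+\infty$, and monotonicity promotes this to $\Fb(U_t)\downarrow\ell$ as $t\to+\infty$, with $\Fb(U_t)\ge\ell$ for every $t\ge0$. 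If $\Fb(U_{t_0})=\ell$ for some finite $t_0$, then $\Fb(U_t)\equiv\ell$ on $[t_0,+\infty)$, so $\|\partial_tU_t\|_{L^2}^2=-\tfrac{d}{dt}\Fb(U_t)=0$ there and the trajectory is eventually constant; since $t\mapsto U_t$ is $C^1$ (Theorem~\ref{thm:glob_def_gr_flow}), the integral in \eqref{eq:loc_fin_len} is then automatically finite. Hence from now on I may assume $f(t):=\Fb(U_t)-\ell>0$ for all $t\ge0$.

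Next, let $r>0$, $C>0$ and $\gamma\in(1,2]$ be furnished by Theorem~\ref{thm:loj_ineq} at $u_\infty$, and set $\mu:=1-\tfrac1\gamma\in(0,\tfrac12]$. By the Riesz representation \eqref{eq:rep_fun} together with the flow equation, $\|\partial_tU_t\|_{L^2}=\|\G^\be[U_t]\|_{L^2}=\|d_{U_t}\Fb\|_{\U^*}$, so whenever $U_t\in B_r(u_\infty)$ the inequality \eqref{eq:loj_sim_ineq} reads $f(t)\le C\,\|\partial_tU_t\|_{L^2}^\gamma$. I would then introduce $\phi(t):=f(t)^\mu$, which is $C^1$ on the set where $f>0$, and using $f'(t)=-\|\partial_tU_t\|_{L^2}^2$ compute, for every $t$ with $U_t\in B_r(u_\infty)$,
\[
-\phi'(t)=\mu\,f(t)^{\mu-1}\|\partial_tU_t\|_{L^2}^2=\mu\,f(t)^{-1/\gamma}\|\partial_tU_t\|_{L^2}^2\ge \mu\,C^{-1/\gamma}\,\|\partial_tU_t\|_{L^2},
\]
where the last step uses $f(t)^{1/\gamma}\le C^{1/\gamma}\|\partial_tU_t\|_{L^2}$. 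Thus the pointwise bound $\|\partial_tU_t\|_{L^2}\le\tfrac{C^{1/\gamma}}{\mu}\bigl(-\phi'(t)\bigr)$ holds on $\mathcal I=\{t\ge0:U_t\in B_r(u_\infty)\}$.

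Finally, since $\mathcal I$ is the preimage of the open ball $B_r(u_\infty)$ under the continuous curve $t\mapsto U_t$, it is open in $[0,+\infty)$, hence an at most countable disjoint union of relatively open intervals $(a_i,b_i)$. Integrating the pointwise bound over each component (with the obvious one-sided limits at the endpoints) yields $\int_{a_i}^{b_i}\|\partial_tU_\theta\|_{L^2}\,d\theta\le\tfrac{C^{1/\gamma}}{\mu}\bigl(\phi(a_i)-\phi(b_i)\bigr)$, and the crucial observation is that $\phi$ is non-increasing on the whole of $[0,+\infty)$ (because $f$ is and $\mu>0$), so the nonnegative increments $\phi(a_i)-\phi(b_i)$ over the pairwise disjoint components sum to at most $\phi(0)=f(0)^\mu$. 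Summing over $i$ gives
\[
\int_{\mathcal I}\|\partial_tU_\theta\|_{L^2}\,d\theta\le \frac{C^{1/\gamma}}{\mu}\,\bigl(\Fb(u_0)-\ell\bigr)^{\mu}<\infty,
\]
which is \eqref{eq:loc_fin_len}. The step I expect to be the only genuine obstacle is precisely this last one: $\mathcal I$ need not be a single interval, so the differential inequality cannot be integrated in one go; this is resolved exactly by the global monotonicity of $\phi$, which forces the telescoping sum over the connected components of $\mathcal I$ to converge.
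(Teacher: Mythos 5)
Your proof is correct, and it takes a genuinely different route from the paper's. You run the classical Simon argument directly: set $f(t)=\Fb(U_t)-\Fb(u_\infty)$, pass to $\phi=f^\mu$ with $\mu=1-\tfrac1\gamma$, and combine $f'=-\|\partial_tU_t\|_{L^2}^2$ with the Łojasiewicz--Simon bound to get the pointwise estimate $\|\partial_tU_t\|_{L^2}\le\tfrac{C^{1/\gamma}}{\mu}(-\phi'(t))$ wherever $U_t\in B_r(u_\infty)$; then, since $\mathcal I$ is open and $\phi$ is non-increasing on all of $[0,+\infty)$, the length integral telescopes over the (possibly infinitely many) connected components to $\le\tfrac{C^{1/\gamma}}{\mu}\phi(0)$. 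The paper instead glues the components of $\mathcal I$ together via the piecewise-affine reparametrization $\sigma$, studies $g(t)=\Fb(U_{\sigma(t)})-\Fb(u_\infty)$, compares it to the solution $h$ of the ODE $\dot h=-Ch^{2/\gamma}$ (handling downward jumps at the gluing points), and finally estimates $\int\sqrt{-\dot g}$ by Cauchy--Schwarz against a weight $\vartheta^{1+\delta}$ followed by integration by parts. Both are valid; yours is shorter and avoids the ODE comparison and weighted Cauchy--Schwarz entirely, because the $\phi$-trick converts the length integrand directly into a total derivative, while the paper's gluing construction arrives at the same finiteness more indirectly. Two small points worth keeping explicit in a write-up: (i) the reduction to $f>0$ also guarantees $\|\partial_tU_t\|_{L^2}>0$ on $\mathcal I$ (by Łojasiewicz--Simon at $u_\infty$), so the division by $\|\partial_tU_t\|_{L^2}$ hidden in $f^{-1/\gamma}\ge C^{-1/\gamma}\|\partial_tU_t\|_{L^2}^{-1}$ is legitimate; and (ii) in the telescoping step one should order the components increasingly (reordering is harmless since the increments $\phi(a_i)-\phi(b_i)$ are nonnegative) so that the bound $\sum_i(\phi(a_i)-\phi(b_i))\le\phi(0)$ follows from $\phi(b_i)\ge\phi(a_{i+1})$.
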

\begin{proof}
Let $u_\infty \in \U$ be a limiting point of
$t\mapsto U_t$, and let $(\bar t_j)_{j\geq 1}$ be
a sequence such that $\bar t_j\to+\infty$ and
$||U_{\bar t_j}- u_\infty||_{L^2}\to 0$ as $j\to\infty$. 
The same computation as in \eqref{eq:dec_fun_grflow}
implies that the functional $\Fb:\U\to\R_+$ 
is decreasing along the trajectory $t\mapsto U_t$,
i.e.,
\begin{equation} \label{eq:dec_fun_taj}
\Fb(U_{t'})\leq\Fb(U_{t}) 
\end{equation}
for every $t'\geq t\geq 0$.
In addition, using the continuity of $\Fb$,
it follows 
that $\Fb(U_{\bar t_j}) \to \Fb(u_\infty)$
as $j\to\infty$.
Combining these facts, we have that
\begin{equation} \label{eq:pos_diff_traj}
\Fb(U_t) - \Fb(u_\infty) \geq 0
\end{equation}  
for every $t\geq 0$.
Moreover, owing to  Theorem~\ref{thm:loj_ineq},
we deduce that there exist
$C>0$, $\gamma \in (1,2]$ and $r>0$ such that
\begin{equation} \label{eq:loj_sim_omega}
|\Fb (v) - \Fb (u_\infty)| 
\leq \frac1C|| d_v\Fb||^{\gamma}_{\U^*}
\end{equation}
for every $v\in B_r (u_\infty)$.
Let $t_1\geq 0$ be the infimum of the
instants such that 
$U_{t}\in B_r(u_\infty)$, i.e.,
\[
t_1:=\inf_{t\geq 0} \{ U_t\in B_r(u_\infty)\}.
\]
We observe that the set where we take the infimum
is nonempty, in virtue of the convergence
$||U_{\bar t_j}- u_\infty||_{L^2}\to 0$ as $j\to \infty$.
Then, there exists $t_1'\in (t_1,+\infty]$ such that
$U_t\in B_r(u_\infty)$ for every 
$t\in (t_1,t_1')$, and we take the supremum
$t_1'>t_1$
such that the previous condition is satisfied, i.e.,
\[
t_1':= \sup_{t'> t_1}\{ U_t\in B_r(u_\infty),
\forall t\in(t_1,t')\}.
\]
If $t_1'<\infty$, we set
\[
t_2:=\inf_{t\geq t_1'} \{ U_t\in B_r(u_\infty)\},
\]
and
\[
t_2':= \sup_{t'> t_2}\{ U_t\in B_r(u_\infty),
\forall t\in(t_2,t')\}.
\]
We repeat this procedure
(which terminates in a finite number of steps
if and only if there exits $\bar t>0$ such that
$U_t \in B_r(u_\infty)$ for every 
$t\geq \bar t$), and we
obtain a family of intervals
$\{ (t_j,t_j') \}_{j=1,\ldots,N}$,
where $N\in \mathbb{N}\cup \{\infty \}$.
We observe that
$\bigcup_{j=1}^N(t_j,t_j') = \mathcal{I}$, 
where we set
$\mathcal{I}:=\{ t\geq 0: U_t
\in B_{r}(u_\infty) \}$.

Without loss of generality, we may assume that
$\mathcal{I}$
is a set of infinite Lebesgue measure.
Indeed, if this is not the case, we would
have the thesis:
\[
\int_{\mathcal{I}}||\partial_t 
U_\theta||_{L^2}
\,d\theta = \int_{\mathcal{I}}
||\G^\be[U_\theta]||_{L^2} \,d\theta <\infty,
\]
since $||\G^\be[u]||_{L^2}$ is bounded on 
the bounded subsets of $\U$, as shown in
\eqref{eq:loc_bound_ODE}.
Therefore, we focus on
the case when the Lebesgue measure of
$\mathcal{I}$ is infinite.
Let us introduce the following sequence:
\begin{equation} \label{eq:def_taus}
 \tau_0 = t_1, \,\,\,
\tau_1 = t_1', \,\,\, \tau_2 = \tau_1 +(t_2'-t_2),
\,\,\,
\ldots, \,\,\,
 \tau_j = \tau_{j-1} + (t'_j-t_j), \,\,\,
 \ldots,
\end{equation}
where $t_1,t'_1,\ldots$ are the extremes of the
intervals $\{ (t_j,t_j') \}_{j=1,\ldots,N}$
constructed above. Finally, we define
the function $\sigma:[\tau_0,+\infty)\to
[\tau_0,+\infty)$ as follows:
\begin{equation} \label{eq:def_sigma}
\sigma(t) :=
\begin{cases}
t &\mbox{if }\tau_0\leq t<\tau_1, \\
t-\tau_1 + t_2 &\mbox{if }
\tau_1\leq t<\tau_2, \\
t-\tau_2 + t_3 
&\mbox{if } \tau_2\leq t<
\tau_3,\\
\cdots & \cdots
\end{cases}
\end{equation}
We observe that $\sigma:[\tau_0,+\infty)\to
[\tau_0,+\infty)$ is 
piecewise affine and  it is monotone increasing. 
In particular, we have that
\begin{equation} \label{eq:mon_sigma}
\sigma(\tau_j) = t_{j+1} \geq t'_j =
\lim_{t\to \tau_j^-} \sigma(t).
\end{equation}
Moreover, from \eqref{eq:def_taus} and from the
definition of the intervals
$\{ (t_j,t_j') \}_{j\geq1}$, it follows that
\begin{equation} \label{eq:incl_traj_ball}
U_{\sigma(t)}\in B_r(u_\infty)
\end{equation}
for every $t\in[\tau_0,+\infty)$.
Let us define the function 
$g:[\tau_0, +\infty) \to \R_+$ as follows:
\begin{equation} \label{eq:def_g_t}
g(t) :=
\Fb(U_{\sigma(t)}) -\Fb(u_\infty),
\end{equation}
where we used \eqref{eq:pos_diff_traj} to
deduce that $g$ is always non-negative.
From \eqref{eq:def_sigma},
we obtain that the restriction 
$g|_{(\tau_j,\tau_{j+1})}$ is $C^1$-regular,
for every $j\geq 0$. Therefore,
using the fact that 
$\dot \sigma|_{(\tau_j,\tau_{j+1})}\equiv 1$,
we compute
\begin{equation*} 
\dot g(t) = \frac{d}{dt}\big(
\Fb(U_{\sigma(t)})-\Fb(u_\infty) \big)
= -d_{U_{\sigma(t)}}\Fb \big(\G^\be[U_{\sigma(t)}]
\big)
\end{equation*}
for every $t\in (\tau_j,\tau_{j+1})$ and for every
$j\geq 0$.
Recalling that $\G^\be:\U\to\U$
is the Riesz's representation of the differential
$d\Fb:\U\to\U^*$, 
it follows that 
\begin{equation} \label{eq:det_time_g}
\dot g(t) = -||d_{U_{\sigma(t)}}\Fb||_{\U^*}^2
\end{equation}
for every $t\in (\tau_j,\tau_{j+1})$ and for every
$j\geq 0$.
Moreover, owing to the Lojasiewicz-Simon
inequality \eqref{eq:loj_sim_omega},
from \eqref{eq:incl_traj_ball} we deduce that
\begin{equation} \label{eq:der_g_est}
\dot g(t) \leq -Cg^{\frac2\gamma}(t)
\end{equation}
for every $t\in (\tau_j,\tau_{j+1})$ and for every
$j\geq 0$.
Let $h:[\tau_0,\infty) \to [0,+\infty)$ be the solution
of the Cauchy problem
\begin{equation} \label{eq:h_aux_ode}
\dot h = -C h^{\frac{2}{\gamma}}, \,\,\,\, 
h(\tau_0) = g(\tau_0),
\end{equation}
whose expression is
\begin{equation*} 
h(t) = 
\begin{cases}
\left(
h(\tau_0)^{1-\frac{2}{\gamma}}
+\frac{(2-\gamma)C}{\gamma}(t-\tau_0)
\right)^{-1-\frac{2\gamma-2}{2-\gamma}}
&\mbox{if }\gamma\in(1,2),\\
h(\tau_0)e^{-Ct}&
\mbox{if } \gamma=2,
\end{cases}
\end{equation*}
for every $t\in[\tau_0,\infty)$.
Using the fact that $g|_{(\tau_0,\tau_1)}$ is
$C^1$-regular, in view of \eqref{eq:der_g_est}, we
deduce that
\begin{equation}\label{eq:ineq_g_h}
g(t) \leq h(t),
\end{equation}
for every $t\in[\tau_0,\tau_1)$. 
We shall now prove that the previous inequality
holds for every $t\in [\tau_0,+\infty)$ using an
inductive argument. Let us assume that
\eqref{eq:ineq_g_h} holds in the interval
$[\tau_0,\tau_j)$, with $j\geq 1$. From the 
definition of $g$,
combining \eqref{eq:dec_fun_taj} and
\eqref{eq:mon_sigma}, we obtain that
\begin{equation} \label{eq:jumps_g}
g(\tau_j) \leq \lim_{t\to\tau_j^-}g(t) 
\leq \lim_{t\to\tau_j^-}h(t)= h(\tau_j).
\end{equation}
Using that the restriction 
$g|_{(\tau_j,\tau_{j+1})}$ is $C^1$-regular, 
in virtue of \eqref{eq:der_g_est},
\eqref{eq:h_aux_ode} and 
\eqref{eq:jumps_g} ,
we extend the 
the inequality \eqref{eq:ineq_g_h} 
to the interval
$[\tau_0,\tau_{j+1})$.
This shows that \eqref{eq:ineq_g_h} is satisfied
for every $t\in[\tau_0,+\infty)$.

We now prove that the portion of the
trajectory that lies in $B_r(u_\infty)$ is
finite.
We observe that
\begin{equation} \label{eq:int_len_1}
\int_{\mathcal I}||\partial_t 
U_\theta||_{L^2} \,d\theta 
= \int_{\mathcal I}
||\G^\be(U_\theta)||_{L^2} \,d\theta
=
\int_{\mathcal I}
||d_{U_\theta}\Fb||_{\U^*} \,d\theta,
\end{equation}
where we recall that $\mathcal{I}=\bigcup_{j=1}^N
(t_j,t'_j)$.
For every $j\geq 1$, in the interval $(t_j,t'_j)$
we use the change of variable
$\theta = \sigma	(\vartheta)$, 
where $\sigma$ is defined in \eqref{eq:def_sigma}.
Using 
\eqref{eq:def_taus} and \eqref{eq:def_sigma},
we observe that $\sigma^{-1}\{(t_j,t'_j)\}=
(\tau_{j-1},\tau_j)$ and that 
$\dot \sigma|_{(\tau_{j-1},\tau_j)}\equiv 1$.
These facts yield
\begin{equation} \label{eq:int_len_2}
\int_{t_j}^{t'_j} 
||d_{U_\theta}\Fb||_{\U^*} \,d\theta
= \int_{\tau_{j-1}}^{\tau_j}
||d_{U_{\sigma(\vartheta)}}\Fb||_{\U^*} \,d\vartheta
= \int_{\tau_{j-1}}^{\tau_j}
\sqrt{-\dot g(\vartheta)}
\,d\vartheta
\end{equation}
for every $j\geq 1$,
where we used \eqref{eq:det_time_g} in the last 
identity.
Therefore, combining \eqref{eq:int_len_1}
and \eqref{eq:int_len_2}, we deduce that
\begin{equation} \label{eq:int_len_3}
\int_\mathcal{I} ||\partial_t U_\theta||_{L^2}\,
d\theta = \int_{\tau_0}^{+\infty}
\sqrt{-\dot g(\vartheta)}\,d\vartheta.
\end{equation}
Then the thesis reduces
to prove that the quantity at the right-hand side
of \eqref{eq:int_len_3} is finite.
Let $\delta>0$ be a positive quantity whose value
will be specified later.
From the Cauchy-Schwarz inequality, it follows
that
\begin{equation} \label{eq:int_len_4}
\int_{\tau_0}^{+\infty}
\sqrt{-\dot g(\vartheta)}\,d\vartheta
\leq 
\left(
\int_{\tau_0}^\infty {-\dot g(\vartheta)}\vartheta^{1+\delta} \, d\vartheta
\right)^{\frac12}
\left(
\int_{\tau_0}^\infty \vartheta^{-1-\delta} \, d\vartheta
\right)^{\frac12}.
\end{equation}
On the other hand, for every $j\geq 1$,
using the integration by parts
on each interval $(\tau_0,\tau_1),\ldots,
(\tau_{j-1},\tau_j)$, we have that
\begin{align*}
\int_{\tau_0}^{\tau_j} {-\dot g(\vartheta)}
\vartheta^{1+\delta} \, d\vartheta
&= \sum_{i=1}^j
\left(
\tau_{i-1}^{1+\delta}g(\tau_{i-1}) -
\tau_i^{1+\delta}g(\tau_i^-) 
+ (1+\delta)
\int_{\tau_{i-1}}^{\tau_i} {g(\vartheta)}\vartheta^{\delta} \, d\vartheta
\right)\\
&\leq 
\tau_{0}^{1+\delta}g(\tau_{0}) -
\tau_j^{1+\delta}g(\tau_j^-)
+ (1+\delta)
\int_{\tau_0}^{\tau_j} 
{h(\vartheta)}\vartheta^{\delta} \, d\vartheta\\
&\leq 
\tau_{0}^{1+\delta}g(\tau_{0})
+ (1+\delta)
\int_{\tau_0}^{\tau_j} 
{h(\vartheta)}\vartheta^{\delta} \, d\vartheta,
\end{align*}
where we introduced the notation
$g(\tau_i^-):=\lim_{\vartheta\to \tau_i^-}
g(\vartheta)$, and we used the first inequality
of \eqref{eq:jumps_g}
and the fact that $g$ is always non-negative.
Finally, if the exponent $\gamma$ in
\eqref{eq:loj_sim_omega} satisfies $\gamma=2$,
we can choose any  positive $\delta>0$.
On the other hand, if $\gamma\in(1,2)$, we choose
$\delta$ such that 
$0<\delta<\frac{2\gamma -2}{2-\gamma}$.
This choice guarantees that
that
\[
\lim_{j\to\infty} \int_{\tau_0}^{\tau_j} {-\dot g(\vartheta)}
\vartheta^{1+\delta} \, d\vartheta
=
\int_{\tau_0}^{\infty} {-\dot g(\vartheta)}
\vartheta^{1+\delta} \, d\vartheta
<\infty,
\]
and therefore, in virtue of \eqref{eq:int_len_4}
and \eqref{eq:int_len_3},
we deduce the thesis. 
\end{proof}

In the following corollary we state an immediate
(but important) consequence of 
Proposition~\ref{prop:len_omega}.

\begin{corollary} \label{cor:lim_omega}
Under the same assumptions as in
Proposition~\ref{prop:len_omega},
let the curve
$U:[0,+\infty)\to\U$ be the solution of
the Cauchy problem
\eqref{eq:grad_flow} with initial datum
$U_0=u_0$. If $u_\infty\in \U$ is a 
limiting point for the curve
$t\mapsto U_t$, then 
the whole solution converges to $u_\infty$ as 
$t\to \infty$, i.e., 
\[
\lim_{t\to \infty} ||U_t-u_\infty||_{L^2} =0.
\]
Moreover, the length of the whole solution is finite. 
\end{corollary}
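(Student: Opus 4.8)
The plan is to deduce Corollary~\ref{cor:lim_omega} directly from Proposition~\ref{prop:len_omega} together with the pre-compactness machinery of Section~\ref{sec:comp}. First I would fix a limiting point $u_\infty$ and the associated sequence $(\bar t_j)_{j\geq 1}$ with $\bar t_j\to+\infty$ and $||U_{\bar t_j}-u_\infty||_{L^2}\to 0$. Proposition~\ref{prop:len_omega} provides a radius $r>0$ such that the portion of the curve lying in $B_r(u_\infty)$ has finite length, i.e. $\int_{\mathcal I}||\partial_tU_\theta||_{L^2}\,d\theta<\infty$, where $\mathcal I=\{t\geq 0:U_t\in B_r(u_\infty)\}$. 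The key observation is that the convergence $||U_{\bar t_j}-u_\infty||_{L^2}\to 0$ forces $U_t$ to stay inside $B_r(u_\infty)$ for all sufficiently large $t$: otherwise the trajectory would have to leave and re-enter $B_{r/2}(u_\infty)$ infinitely often, and each such excursion contributes at least $r/2$ to the length $\int_{\mathcal I}||\partial_tU_\theta||_{L^2}\,d\theta$ (by the estimate \eqref{eq:def_length} applied to a sub-arc joining a point at distance $r/2$ to a point at distance close to $0$), contradicting the finiteness just obtained. Hence there exists $\bar t\geq 0$ with $U_t\in B_r(u_\infty)$ for every $t\geq \bar t$.

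Once this is established, I would argue that the tail of the curve has finite length: $\int_{\bar t}^{+\infty}||\partial_tU_\theta||_{L^2}\,d\theta\leq \int_{\mathcal I}||\partial_tU_\theta||_{L^2}\,d\theta<\infty$. Finiteness of the length of the tail, combined with the estimate \eqref{eq:def_length} (which gives $||U_{t'}-U_t||_{L^2}\leq\int_t^{t'}||\partial_tU_\theta||_{L^2}\,d\theta$ for $t'\geq t$), shows that $t\mapsto U_t$ is Cauchy in $\U$ as $t\to+\infty$; therefore it converges to some limit $u_*\in\U$. Since $u_\infty$ is itself a limiting point, we must have $u_*=u_\infty$, which gives $\lim_{t\to\infty}||U_t-u_\infty||_{L^2}=0$. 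For the final assertion about the whole length being finite, I would add the length on the compact initial segment $[0,\bar t]$, which is finite because $t\mapsto\partial_tU_t=-\G^\be[U_t]$ is continuous on $[0,\bar t]$ and hence bounded there (alternatively invoke the bound \eqref{eq:loc_bound_ODE} and the a priori bound $||U_t||_{L^2}\leq C_R$ from Lemma~\ref{lem:bound_2_traj}), so $\int_0^{+\infty}||\partial_tU_\theta||_{L^2}\,d\theta<\infty$.

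The step I expect to be the main (though mild) obstacle is making rigorous the claim that the trajectory eventually remains trapped in $B_r(u_\infty)$; one must be slightly careful that the excursions counted really do accumulate length at least $r/2$ each, and that they are genuinely infinite in number if the trajectory fails to be eventually trapped. This is where the hypothesis that $u_\infty$ is a \emph{limiting} point (so the trajectory returns arbitrarily close to $u_\infty$ infinitely often) is used in tandem with the finite-length conclusion of Proposition~\ref{prop:len_omega}: if the trajectory were not eventually trapped, then between a return time $\bar t_j$ (where $||U_{\bar t_j}-u_\infty||_{L^2}<r/4$) and the next exit from $B_{r/2}(u_\infty)$ the curve must travel $L^2$-distance at least $r/4$, and infinitely many such disjoint sub-arcs all lie in $\mathcal I$, contradicting $\int_{\mathcal I}||\partial_tU_\theta||_{L^2}\,d\theta<\infty$. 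Everything else is a routine application of the Cauchy criterion in the Hilbert space $\U$ and the integral inequality \eqref{eq:def_length}.
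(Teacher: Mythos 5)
Your proposal is correct and uses the same key mechanism as the paper: infinitely many disjoint excursions from distance $r/4$ out to distance $r/2$, each of length at least $r/4$ and each lying in the set $\mathcal{I}$ from Proposition~\ref{prop:len_omega}, contradicting the finite local length. The only difference is a mild logical rearrangement: the paper assumes \emph{non-convergence} directly and derives the excursions to obtain a contradiction, whereas you first derive eventual trapping in $B_r(u_\infty)$ by the same excursion count, then pass to convergence via the Cauchy criterion applied to the finite-length tail $\int_{\bar t}^{+\infty}||\partial_tU_\theta||_{L^2}\,d\theta<\infty$, and finally identify the limit as $u_\infty$ by uniqueness. Your route makes the step ``trapping $\Rightarrow$ convergence'' explicit (the paper leaves it compressed in the contradiction), at the cost of one extra intermediate object $u_*$. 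Both buy the same thing; the core idea is identical, and your treatment of the total-length claim on $[0,\bar t]$ matches the paper's as well.
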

\begin{proof}
We prove the statement by contradiction. Let us assume
that $t\mapsto U_t$ is not converging to 
$u_\infty$ as $t\to \infty$. Let $B_r(u_\infty)$
be the neighborhood of $u_{\infty}$ given by
Proposition~\ref{prop:len_omega}. 
Diminishing $r>0$ if necessary,
we can find
two sequences $\{ t_j \}_{j\geq 0}$ 
and $\{ t_j' \}_{j\geq 0}$ such that for every
$j\geq0$ the following conditions hold:
\begin{itemize}
\item $t_j<t_j'<t_{j+1}$;
\item $||U_{t_j} -u_\infty||_{L^2}
 \leq \frac{r}{4}$;
\item $\frac{r}{2} \leq 
||U_{t_j'} -u_\infty||_{L^2} \leq r$;
\item $U_t\in B_r(u_\infty)$ for every $t\in 
(t_j,t_j')$.
\end{itemize}
We observe that $\bigcup_{j=1}^\infty(t_j,t_j')
\subset \mathcal{I}$, where 
$\mathcal{I}:=\{ t\geq0: U_t\in B_r(u_\infty) \}$.
Moreover the inequality
\eqref{eq:def_length} and the
previous conditions imply that
\begin{equation*}
\int_{t_j}^{t_j'} ||\partial_t U_\theta||_\U
\,d\theta \geq || U_{t_k'} - U_{t_k}||_{\U}
\geq \frac{r}{4}
\end{equation*}
for every $j\geq 0$.
However, this contradicts \eqref{eq:loc_fin_len}.
Therefore, we deduce that
$|| U_t-u_\infty ||_{\U}\to 0$
as $t\to\infty$.
In particular, this means
 that there exists $\bar t \geq 0$ such
that $U_t \in B_r (u_\infty)$ for every
$t\geq \bar t$. This in turn
 implies that the whole trajectory
has finite length, since
\[
\int_0^{\bar t} ||\partial_t U_\theta||_{L^2} \,d\theta
<+\infty.
\] 
\end{proof}

We observe that in 
Corollary~\ref{cor:lim_omega} we need to assume
{\it a priori} that the solution of the
Cauchy problem \eqref{eq:grad_flow} admits
a limiting point. However, for a general
initial datum $u_0\in \U$ we cannot prove that
this is actually the case. On the other hand,
if we assume more regularity on the Cauchy datum 
$u_0$, we can use the compactness results proved
in Section~\ref{sec:comp}. We recall the notation
$H^0([0,1],\R^k) =: \U$.

\begin{theorem} \label{thm:conv_sobolev}
Let us assume that the vector fields 
$F^1,\ldots,F^k$ defining the control system
\eqref{eq:ctrl_Cau} are real-analytic, as well
as the function $a:\R^n\to\R_+$ designing
the end-point cost.
Let $U:[0,+\infty)\to\U$ be the solution of the
Cauchy problem \eqref{eq:grad_flow}
with initial datum $U_0=u_0$, and let
 $m\geq 1$ be an integer
such that $u_0$ belongs to
$H^m([0,1],\R^k)$. Then there exists $u_\infty
\in H^m([0,1],\R^k)$ such that
\begin{equation} \label{eq:strong_cov_sol}
\lim_{t\to \infty} 
||U_t-u_\infty||_{H^{m-1}} =0.
\end{equation} 
\end{theorem}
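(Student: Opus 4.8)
The plan is to combine the finite-length mechanism of Section~\ref{sec:conv} (which takes place in the $L^2$-topology) with the $H^{m-1}$-pre-compactness of the trajectory from Section~\ref{sec:comp}, and then to recover the $H^m$-regularity of the limit from the uniform $H^m$-bound. First I would note that, since $u_0\in H^m([0,1],\R^k)$ with $m\geq1$, Corollary~\ref{cor:comp_traj} ensures that $\{U_t:t\geq0\}$ is pre-compact in $H^{m-1}([0,1],\R^k)$, hence --- through the continuous inclusion $H^{m-1}\hookrightarrow L^2$ --- pre-compact in $\U$ as well. Consequently there are $t_j\to+\infty$ and $u_\infty\in H^{m-1}([0,1],\R^k)$ with $U_{t_j}\to u_\infty$ in $H^{m-1}$, so in particular $u_\infty$ is a limiting point of $t\mapsto U_t$ in the sense of Section~\ref{sec:conv}. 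Corollary~\ref{cor:lim_omega} then yields $\|U_t-u_\infty\|_{L^2}\to0$ as $t\to+\infty$, together with the finiteness of the $L^2$-length of the whole curve.

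The second step is to upgrade this convergence from $L^2$ to $H^{m-1}$ by a compactness-and-uniqueness argument. If \eqref{eq:strong_cov_sol} failed, there would be $\e>0$ and $s_j\to+\infty$ with $\|U_{s_j}-u_\infty\|_{H^{m-1}}\geq\e$; by $H^{m-1}$-pre-compactness a subsequence of $(U_{s_j})$ would converge in $H^{m-1}$, hence in $L^2$, to some $w$, and since the whole curve converges to $u_\infty$ in $L^2$ we would get $w=u_\infty$, contradicting the lower bound. Therefore $\|U_t-u_\infty\|_{H^{m-1}}\to0$.

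It remains to check that $u_\infty\in H^m([0,1],\R^k)$. By Theorem~\ref{thm:glob_def_sob} the solution satisfies $\|U_t\|_{H^m}\leq C_{u_0}$ for all $t\geq0$, so $(U_{t_j})$ is bounded in the Hilbert space $H^m([0,1],\R^k)$ and admits a subsequence converging weakly in $H^m$ to some $v\in H^m([0,1],\R^k)$; by the weak-convergence remark recalled at the end of Subsection~\ref{subsec:sobolev}, this subsequence converges to $v$ strongly in $H^{m-1}$. Since it also converges to $u_\infty$ in $H^{m-1}$ by the previous step, we conclude $v=u_\infty$, so $u_\infty\in H^m([0,1],\R^k)$, which completes the proof of \eqref{eq:strong_cov_sol}.

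The main difficulty is not a single hard estimate but the careful bookkeeping of topologies: the Lojasiewicz--Simon machinery of Proposition~\ref{prop:len_omega} and Corollary~\ref{cor:lim_omega} only produces $L^2$-convergence and $L^2$-length bounds, so the whole point of the argument is to transport that information to the stronger $H^{m-1}$-topology by pre-compactness and then to capture the $H^m$-membership of $u_\infty$ via weak compactness. One may additionally observe that $u_\infty$ is a critical point of $\Fb$: the continuity of $\G^\be$ gives $\partial_tU_t=-\G^\be[U_t]\to-\G^\be[u_\infty]$, while the finiteness of the length forces $\|\partial_tU_{t_j}\|_{L^2}\to0$ along a suitable sequence, whence $\G^\be[u_\infty]=0$.
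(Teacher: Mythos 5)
Your proposal is correct and takes essentially the same route as the paper: $H^{m-1}$-pre-compactness via Corollary~\ref{cor:comp_traj} to produce a limiting point, Corollary~\ref{cor:lim_omega} to get $L^2$-convergence of the whole trajectory, a compactness-and-uniqueness argument to upgrade that to $H^{m-1}$-convergence, and the uniform $H^m$-bound from Theorem~\ref{thm:glob_def_sob} together with weak compactness to capture $u_\infty\in H^m$. The only difference is that you spell out the subsequence/uniqueness contradiction arguments explicitly where the paper states them tersely; the underlying logic is identical.
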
 
\begin{proof}
Let us consider $u_0\in H^m([0,1],\R^k)$ and let
$U:[0,+\infty)\to\U$ be the solution of
\eqref{eq:grad_flow} satisfying
$U_0=u_0$. 
Owing to Theorem~\ref{thm:glob_def_sob}, we have 
that $U_t\in H^m([0,1],\R^k)$ for every $t\geq0$,
and that the trajectory 
$\{U_t: t\geq 0 \}$
is bounded in
$H^{m}([0,1],\R^k)$. In addition,
from Corollary~\ref{cor:comp_traj}, we 
deduce that $\{U_t: t\geq 0 \}$
is pre-compact 
with respect to the strong topology of
$H^{m-1}([0,1],\R^k)$.
Therefore,
there exist $u_\infty\in H^{m-1}([0,1],\R^k)$ and a 
sequence $(t_j)_{j\geq 1}$
such that we have $t_j\to+\infty$ and
$||U_{t_j}-u_\infty||_{H^{m-1}}\to0$
as $j\to\infty$.
In particular, this implies that
$||U_{t_j}-u_\infty||_{L^2}\to0$ as $j\to\infty$.
In virtue of Corollary~\ref{cor:lim_omega},
we deduce that
$||U_t - u_\infty||_{L^2}\to0$ as $t\to+\infty$.
Using again the pre-compactness of the
 trajectory $\{U_t: t\geq 0 \}$
with respect to the strong topology of
$H^{m-1}([0,1],\R^k)$,
the previous convergence implies that 
$||U_t-u_\infty||_{H^{m-1}}\to 0$ as $t\to+\infty$.

To conclude, we have to show that $u_\infty \in
H^{m}([0,1],\R^k)$. 
Owing to 
the compact inclusion
\eqref{eq:comp_sob_sob} in 
Theorem~\ref{thm:comp_sob_imm}, and
recalling that the trajectory
$\{U_t: t\geq 0 \}$
is pre-compact with respect to the weak topology
of $H^{m}([0,1],\R^k)$, 
the convergence \eqref{eq:strong_cov_sol} 
guarantees that
$u_\infty\in H^{m}([0,1],\R^k)$ and that
$U_t\weak_{H^m}u_\infty$ as $t\to+\infty$.
\end{proof}

In the next result we study the
regularity of the limiting points of the gradient
flow trajectories.

\begin{theorem}\label{thm:reg_crit}
Let us assume that the vector fields 
$F^1,\ldots,F^k$ defining the control system
\eqref{eq:ctrl_Cau} are real-analytic, as well
as the function $a:\R^n\to\R_+$ designing
the end-point cost.
Let $U:[0,+\infty)\to\U$ be the solution of
the Cauchy problem
\eqref{eq:grad_flow} with initial datum
$U_0=u_0$, and let $u_\infty \in \U$ be any of
its limiting points.
Then $u_\infty$ is a critical point for 
the functional $\Fb$, i.e., $d_{u_\infty}\Fb=0$.
Moreover, $u_\infty \in H^m([0,1],\R^k)$ for
every integer $m\geq 1$.
\end{theorem}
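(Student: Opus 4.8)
The plan is to prove first that $u_\infty$ is a critical point of $\Fb$, and then to bootstrap its Sobolev regularity from the resulting fixed-point relation. Since $u_\infty$ is a limiting point of the trajectory, Corollary~\ref{cor:lim_omega} guarantees that the whole curve $t\mapsto U_t$ converges to $u_\infty$ in $\U$ as $t\to+\infty$, and that it has finite length, i.e. $\int_0^{+\infty}\|\partial_t U_\theta\|_{L^2}\,d\theta<+\infty$. Because $\partial_t U_\theta=-\G^\be[U_\theta]$, this forces $\liminf_{t\to+\infty}\|\G^\be[U_t]\|_{L^2}=0$, so there exists a sequence $t_j\to+\infty$ with $\|\G^\be[U_{t_j}]\|_{L^2}\to 0$. (Alternatively one may integrate the dissipation identity \eqref{eq:dec_fun_grflow} and use that $\Fb\geq 0$.) Along this sequence $U_{t_j}\to u_\infty$ in $\U$, and since $\G^\be:\U\to\U$ is continuous --- being Lipschitz on bounded sets, by Lemma~\ref{lem:lipsh_diff_endcost} together with \eqref{eq:rep_map_expr} --- we obtain $\G^\be[u_\infty]=\lim_j\G^\be[U_{t_j}]=0$. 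As $\G^\be$ is the Riesz representative of $d\Fb$, this is precisely $d_{u_\infty}\Fb=0$.

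For the regularity I would argue by induction on $m$. From criticality and \eqref{eq:rep_map_expr} we get the identity $0=\G^\be[u_\infty]=u_\infty+\be h_{u_\infty}$, hence $u_\infty=-\be h_{u_\infty}$. Since $F^1,\ldots,F^k$ and $a:\R^n\to\R_+$ are real-analytic, they are $C^r$-regular for every $r$, so Lemma~\ref{lem:sob_est_rep_endcost} is available at all orders. The base case $u_\infty\in H^0([0,1],\R^k)=\U$ holds trivially. If $u_\infty\in H^{m-1}([0,1],\R^k)$, then Lemma~\ref{lem:sob_est_rep_endcost} yields $h_{u_\infty}\in H^m([0,1],\R^k)$, whence $u_\infty=-\be h_{u_\infty}\in H^m([0,1],\R^k)$; iterating, $u_\infty\in H^m([0,1],\R^k)$ for every integer $m\geq 1$.

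The delicate step is really the first one: transferring the vanishing of the differential from a subsequence to the limit. This works only because Corollary~\ref{cor:lim_omega} tells us that the \emph{whole} curve converges to $u_\infty$ (so the subsequence $U_{t_j}$ indeed has limit $u_\infty$) and because $\G^\be$ is continuous in the $L^2$-topology. Once $d_{u_\infty}\Fb=0$ is secured, the Sobolev bootstrap is a mechanical consequence of Lemma~\ref{lem:sob_est_rep_endcost} and introduces no further idea.
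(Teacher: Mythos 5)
Your argument is correct and relies on the same ingredients as the paper's own proof: Corollary~\ref{cor:lim_omega} for whole-curve convergence and finite length, continuity of the gradient map, and the bootstrap via Lemma~\ref{lem:sob_est_rep_endcost}. The only difference is stylistic---the paper argues by contradiction from a local lower bound $\|d_u\Fb\|_{\U^*}\geq\e$ on $B_r(u_\infty)$ for a hypothetically non-critical $u_\infty$, whereas you directly extract a sequence $t_j\to+\infty$ along which $\|\G^\be[U_{t_j}]\|_{L^2}\to 0$ and pass to the limit using the local Lipschitz continuity of $\G^\be$; the regularity bootstrap is identical.
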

\begin{proof}
By Corollary~\ref{cor:lim_omega}, we have that
the solution $t\mapsto U_t$
converges to $u_\infty$ as $t\to+\infty$
with respect to the strong topology of $\U$.
Let us consider the radius $r>0$ prescribed by 
Proposition~\ref{prop:len_omega}. 
If $d_{u_\infty}\Fb \neq 0$,
taking a smaller $r>0$
if necessary, we have that there exists $\e>0$ such
that $|| d_u\Fb ||_{\U^*} \geq \e$ for every
$u\in B_r(u_\infty)$.
Recalling that $||U_t-u_\infty||_{\U}\to0$ 
as $t\to+\infty$, then there exists $\bar t\geq 0$
such that
$U_t \in B_r(u_\infty)$ and for every 
$t\geq \bar t$.
On the other hand, this fact implies that
$|| \partial_t U_t ||_{\U} = ||d_{U_t}\Fb||_{\U^*} 
\geq\e$ for every $t\geq \bar t$, 
but this contradicts \eqref{eq:loc_fin_len},
i.e., 
the fact that the length of the trajectory is 
finite. Therefore, we deduce that
$d_{u_\infty}\Fb=0$.
As regards the regularity of $u_\infty$, we observe 
that $d_{u_\infty}\Fb=0$ implies that
$\G^\be[u_\infty]=0$, which in turn gives
\[
u_\infty = -\be  h_{u_\infty},
\]
where the function $h_{u_\infty}:[0,1]\to\R^k$
is defined as in \eqref{eq:rep_diff_endcost_rep}.
Owing to Lemma~\ref{lem:sob_est_rep_endcost},
we deduce that
the right-hand side of the previous equality
has regularity $H^{m+1}$
whenever $u_\infty \in H^m$, for every 
integer $m\geq 0$. Using a bootstrapping
argument, this implies that $u_\infty
\in H^m([0,1],\R^k)$, for every integer $m\geq1$.
\end{proof}

\begin{remark}
We can give a further characterization of  the
critical points of the functional $\Fb$. Let 
$\hat{u}$ be such that $d_{\hat{u}}\Fb=0$.
Therefore, as seen in the
proof of Theorem~\ref{thm:reg_crit},
we have that the identity
\begin{equation*}
\hat{u}(s) = -\be h_{\hat{u}}(s)
\end{equation*}
is satisfied for every $s\in[0,1]$.
Recalling the definition of $h_{\hat{u}}:[0,1]
\to\R^k$ given in \eqref{eq:rep_diff_endcost_rep},
we observe that the previous relation
yields 
\begin{equation} \label{eq:max_cond}
\hat{u}(s) = \arg \max_{u\in \R^k}
\left\{- \be
\lambda_{\hat{u}}(s)F(x_{\hat{u}}(s)) u
- \frac 12 |u|_2^2
\right\},
\end{equation}
where $x_{\hat{u}}:[0,1]\to\R^n$ solves
\begin{equation}\label{eq:max_traj}
\begin{cases}
\dot x_{\hat{u}}(s) = 
F(x_{\hat{u}}(s))\hat{u}(s) & \mbox{for a.e. }
s\in[0,1],\\
x_{\hat{u}}(0) =x_0,
\end{cases}
\end{equation}
and $\lambda_{\hat{u}}:[0,1]\to(\R^n)^*$ 
satisfies
\begin{equation}\label{eq:max_adj}
\begin{cases}
\dot\lambda_{\hat{u}}(s) = -\lambda_{\hat{u}}(s)
\sum_{i=1}^k\limits
\left(
{\hat{u}}^i(s)\frac{\partial F^i(x_{\hat{u}}(s))}{\partial x}
\right) & \mbox{for a.e. }s\in[0,1],\\
\lambda_{\hat{u}}(1) = \nabla_{x_{\hat{u}}(1)}a.
\end{cases}
\end{equation}
Recalling the Pontryagin Maximum Principle
(see, e.g., \cite[Theorem~12.10]{AS}), 
from \eqref{eq:max_cond}-\eqref{eq:max_adj}
we deduce that the curve 
$x_{\hat{u}}:[0,1]\to\R^n$  is a 
normal Pontryagin extremal for the following
optimal control problem:
\begin{equation*}
\begin{cases}
\min_{u\in \U} \left\{
 \frac12 ||u||_{L^2}^2+\be a(x_u(1))  \right\},\\
\mbox{subject to }
\begin{cases}
\dot x_u = F(x_u) u, \\
x_u(0)=x_0.
\end{cases}
\end{cases}
\end{equation*}
\end{remark}

\end{section}

\begin{section}{$\Gamma$-convergence}
\label{sec:Gamma}
In this section we study the behavior of the 
functionals $(\Fb)_{\be\in\R_+}$ 
as $\be \to +\infty$
using the tools of the $\Gamma$-convergence.
More precisely, we show that the problem of
minimizing the functional $\Fb:\U\to\R_+$
converges as $\be\to+\infty$
(in the sense of $\Gamma$-convergence)
to a limiting minimization problem. 
A classical consequence of this fact is that
the minimizers of the functionals 
$(\Fb)_{\be\in\R_+}$ can provide an 
approximation of the solutions of the
limiting problem.
Moreover, in the present case, the limiting
functional has an important geometrical
meaning, since it is related to
the search of 
sub-Riemannian length-minimizing paths
that connect an initial point to
a target set.
The results obtained in this section
hold under mild regularity assumptions on the
vector fields $F^1,\ldots,F^k$ and on the 
end-point cost $a:\R^n\to\R_+$.
Finally, for a complete introduction to the 
theory of $\Gamma$-convergence,
we refer the reader to the monograph
\cite{D93}. 

In this section we shall work with the weak topology
of the Hilbert space $\U:=L^2([0,1],\R^k)$.
We first establish a preliminary result.
We consider a 
$L^2$-weakly convergent sequence 
$(u_m)_{m\geq 1}\subset\U$, and we study the
convergence of the sequence $(x_m)_{m\geq 1}$,
where, for every $m\geq 1$, the curve
 $x_m:[0,1]\to\R^n$
is the solution of the Cauchy problem 
\eqref{eq:ctrl_Cau} corresponding to the 
admissible control $u_m$.

\begin{lemma} \label{lem:conv_traj}
Let us assume that the vector fields
$F^1,\ldots,F^k$ defining the control system
\eqref{eq:ctrl_Cau} satisfy the
 Lipschitz-continuity condition 
 \eqref{eq:Lipsch_const_F}.
Let $(u_m)_{m\geq 1}
 \subset \U$ be a sequence 
such that $u_m\weak_{L^2} u_\infty$ as 
$m\to\infty$. For every $m\in 
\mathbb{N}\cup \{ \infty \}$,
let $x_m:[0,1]\to \R^n$ be the solution
of \eqref{eq:ctrl_Cau} corresponding to 
the control $u_m$.
Then, we have that
\[
\lim_{m\to\infty} ||x_m - x_\infty||_{C^0} =0.
\] 
\end{lemma}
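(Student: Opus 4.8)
The plan is to combine an Arzel\`a--Ascoli compactness argument with the fact that the pairing of a strongly convergent sequence with a weakly convergent one passes to the limit. First I would recall that a weakly convergent sequence is bounded, so $R:=\sup_{m\in\NN\cup\{\infty\}}\|u_m\|_{L^2}<+\infty$. By Lemma~\ref{lem:C0_bound_traj} every trajectory $x_m$, $m\in\NN\cup\{\infty\}$, takes values in a fixed compact set $K_R\subset\R^n$, on which the continuous vector fields $F^1,\dots,F^k$ are bounded by a constant $C_R'$ depending only on $R$. Writing \eqref{eq:ctrl_Cau} in integral form and using (the interval version of) \eqref{eq:norm_ineq} together with the Cauchy--Schwarz inequality $\int_s^{s'}|u_m(\tau)|_1\,d\tau\le\sqrt k\,\sqrt{|s'-s|}\,\|u_m\|_{L^2}$, one obtains
\[
|x_m(s')-x_m(s)|_2 \le C_R'\sqrt k\, R\,\sqrt{|s'-s|}
\]
for every $s,s'\in[0,1]$ and every $m$. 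Hence $\{x_m\}_{m\ge1}$ is equi-bounded and equi-continuous, and by the Arzel\`a--Ascoli theorem every subsequence of $(x_m)_{m\ge1}$ admits a further subsequence converging in $C^0([0,1],\R^n)$ to some continuous curve $x$.

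Next I would identify any such limit $x$ with $x_\infty$. Fix a subsequence, still denoted $(x_m)_{m\ge1}$, with $x_m\to x$ in $C^0$. For every $s\in[0,1]$ I would split
\[
\int_0^s F(x_m(\tau))u_m(\tau)\,d\tau - \int_0^s F(x(\tau))u_\infty(\tau)\,d\tau
= \int_0^s \big(F(x_m(\tau))-F(x(\tau))\big)u_m(\tau)\,d\tau + \int_0^s F(x(\tau))\big(u_m(\tau)-u_\infty(\tau)\big)\,d\tau.
\]
The first integral is bounded in $\R^n$-norm by $\|F(x_m)-F(x)\|_{C^0}\,\sqrt k\,\|u_m\|_{L^2}$, which tends to $0$: indeed $F$ is uniformly continuous on $K_R$, $x_m\to x$ uniformly, and $\|u_m\|_{L^2}\le R$. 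For the second integral, its $j$-th component ($j=1,\dots,n$) equals $\langle u_m-u_\infty,\, w_j\rangle_{L^2}$ with $w_j(\tau):=\mathbf{1}_{[0,s]}(\tau)\,(F(x(\tau)))^T\mathbf{e}^j\in\U$, hence it converges to $0$ by the definition of weak convergence in $\U$. Passing to the limit in $x_m(s)=x_0+\int_0^s F(x_m(\tau))u_m(\tau)\,d\tau$ then gives $x(s)=x_0+\int_0^s F(x(\tau))u_\infty(\tau)\,d\tau$ for every $s\in[0,1]$, so $x$ is absolutely continuous and solves \eqref{eq:ctrl_Cau} with control $u_\infty$; by the uniqueness part of the Carath\'eodory theorem, $x=x_\infty$.

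Finally, since every subsequence of $(x_m)_{m\ge1}$ has a further subsequence converging in $C^0$ to the same limit $x_\infty$, the whole sequence converges, i.e. $\|x_m-x_\infty\|_{C^0}\to0$, which is the thesis. The only genuinely delicate point is the passage to the limit in the nonlinear term $\int_0^s F(x_m(\tau))u_m(\tau)\,d\tau$: it cannot be handled by weak convergence alone, and the decomposition above, which isolates the strongly convergent factor $F(x_m)$ from the weakly convergent factor $u_m$, is precisely what makes it work. The Lipschitz hypothesis \eqref{eq:Lipsch_const_F} enters only through Lemma~\ref{lem:C0_bound_traj}, in order to confine the trajectories to a common compact set, and is not otherwise used.
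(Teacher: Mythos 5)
Your proof is correct and rests on the same key idea as the paper's: the decomposition of the nonlinear integral into a term pairing the \emph{strongly} convergent factor $F(x_m)$ with the bounded controls, and a term pairing a \emph{fixed} $L^2$ test function against $u_m - u_\infty$, to which weak convergence applies. The two proofs differ only in the surrounding machinery. For pre-compactness of $(x_m)$ in $C^0$, the paper derives an $H^1$-bound on the trajectories and invokes the compact embedding $H^1\hookrightarrow C^0$ from Theorem~\ref{thm:comp_sob_imm}, whereas you establish an explicit H\"older-$\tfrac12$ modulus of equicontinuity and apply Arzel\`a--Ascoli directly; these are morally the same estimate (your H\"older bound is exactly the Morrey-type inequality underlying the compact inclusion), but your route is more self-contained. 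For identifying the $C^0$-limit, the paper writes the ODE against test functions $\phi\in C^\infty_c$ and passes to the limit in the distributional formulation, while you pass to the limit directly in the integral equation $x_m(s)=x_0+\int_0^s F(x_m)u_m$; again equivalent, and slightly more direct since no weak-derivative machinery is needed. Both proofs close with the same subsequence argument and the Carath\'eodory uniqueness theorem.
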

\begin{proof}
Being the sequence $(u_m)_{m\geq 1}$
weakly convergent, we deduce that there exists
$R>0$ such that $||u_m||_{L^2}\leq R$ for every
$m\geq1$. The estimate established in
Lemma~\ref{lem:C0_bound_traj} implies that
there exists $C_R>0$ such that
\begin{equation} \label{eq:boud_seq_C0}
||x_m||_{C^0} \leq C_R,
\end{equation}
for every $m\geq 1$. Moreover, using the sub-linear
growth inequality \eqref{eq:sub_lin}, we have
that there exists $C>0$ such that
\[
|\dot x_m(s)| \leq \sum_{j=1}^k|F^j(x_m(s)|_2
|u_m^j(s)| \leq C(1+C_R) \sum_{j=1}^k |u_m^j(s)|,
\] 
for a.e. $s\in[0,1]$. 
Then, recalling that $||u_m||_{L^2}\leq R$
for every $m\geq1$, we deduce that
\begin{equation} \label{eq:boud_seq_der_L2}
||\dot x_m||_{L^2} \leq
C(1+C_R)kR
\end{equation}
for every $m\geq1$. Combining 
\eqref{eq:boud_seq_C0} 
and \eqref{eq:boud_seq_der_L2}, we obtain that
the sequence $(x_m)_{m\geq1}$ is pre-compact
with respect to the
weak topology of $H^1([0,1],\R^n)$.
Our goal is to prove that the set of the
$H^1$-weak limiting points of the sequence
$(x_m)_{m\geq1}$ coincides with $\{ x_\infty \}$,
i.e., that the whole sequence 
$x_m\weak_{H^1}x_\infty$ as $m\to\infty$.
Let $\hat x \in H^1([0,1],\R^n)$ be any 
$H^1$-weak limiting point of the sequence
$(x_m)_{m\geq1}$, and let 
$(x_{m_\ell})_{\ell\geq1}$ be a sub-sequence
such that $x_{m_\ell}\weak_{H^1}\hat x$
as $\ell \to \infty$.
Recalling \eqref{eq:comp_sob_C0} in
Theorem~\ref{thm:comp_sob_imm}, we have that 
the inclusion 
$H^1([0,1],\R^n)\hookrightarrow C^0([0,1],\R^n)$
is compact, and this implies that 
\begin{equation} \label{eq:C0_conv_traj}
x_{m_\ell}\to_{C^0}\hat x
\end{equation}
as $\ell \to \infty$.
From \eqref{eq:C0_conv_traj} and the assumption
\eqref{eq:Lipsch_const_F}, 
for every $j=1,\ldots,k$ it follows that  
\begin{equation} \label{eq:conv_field}
||F^j(x_{m_l}) - 
F^j(\hat x)||_{C^0} \to 0
\end{equation}
as $\ell \to\infty$.
Let us  consider a smooth and compactly supported
test function $\phi \in C^\infty_c([0,1],\R^n)$.
Therefore,
recalling that $x_{m_\ell}$ is the solution
of the Cauchy problem \eqref{eq:ctrl_Cau}
corresponding to the control $u_{m_\ell}\in \U$,
we have that
\begin{equation*}
\int_0^1 x_{m_\ell}(s)\cdot \dot \phi(s)\,ds
= -\sum_{j=1}^k\int_0^1 \left(F^j(x_{m_\ell}(s))
\cdot \phi(s) \right) u^j_{m_\ell}(s)\, ds
\end{equation*}
for every $\ell\geq1$.
Thus, passing to the limit as $\ell \to \infty$
in the previous identity, we obtain 
\begin{equation} \label{eq:weak_der}
\int_0^1 \hat x(s)\cdot \dot \phi(s)\,ds
= -\sum_{j=1}^k\int_0^1 \left(F^j(\hat x(s))
\cdot \phi(s) \right) u^j_{\infty}(s)\, ds.
\end{equation}
Indeed, the convergence of the right-hand side
is guaranteed by \eqref{eq:C0_conv_traj}.
On the other hand, for every $j=1,\ldots,k$,
from \eqref{eq:conv_field} we deduce the 
strong convergence
$F^j(x_{m_\ell})\cdot \phi \to_{L^2} 
F^j(\hat x)\cdot \phi$ as $\ell \to \infty$, while 
$u^j_{m_\ell}\weak_{L^2} u^j_\infty$ 
as $\ell \to \infty$
by the hypothesis.
Finally, observing that \eqref{eq:C0_conv_traj}
gives $\hat x(0)=x_0$, we deduce that
\begin{equation*}
\begin{cases}
\dot {\hat x}(s) = F(\hat x(s))u_\infty (s),
& {\mbox{for a.e. }s\in[0,1],}\\
\hat x(0)=x_0,
\end{cases}
\end{equation*}
that implies $\hat x \equiv x_\infty$.
This argument shows that $x_m \weak_{H^1}
x_\infty$ as $m\to \infty$.
Finally, the thesis follows
using again the compact inclusion
\eqref{eq:comp_sob_C0}.
\end{proof}

The standard theory of $\Gamma$-convergence requires
the domain of the functionals to be a metric space,
or, more generally, 
to be equipped with a first-countable 
topology (see \cite[Chapter~12]{ABM}).
 Since the weak topology of 
$\U$ is first-countable (and metrizable) only
on the bounded subsets of $\U$, we shall restrict
the functionals $(\Fb)_{\beta\in \R_+}$ to the set
\[
U_\rho := \{ u\in \U: || u ||_{L^2} \leq \rho \},
\]
where $\rho >0$.
We set 
\begin{equation*}
\Fb_\rho := \Fb|_{\U_\rho},
\end{equation*}
where $\Fb:\U\to\R_+$ is defined in
\eqref{eq:cost}.
Using Lemma~\ref{lem:conv_traj} we deduce that
for every $\be>0$ and $\rho>0$
the functional 
$\Fb_\rho:\U_\rho\to\R_+$ admits a minimizer.

\begin{proposition} \label{prop:dir_met}
Let us assume that the vector fields
$F^1,\ldots,F^k$ defining the control system
\eqref{eq:ctrl_Cau} satisfy the
Lipschitz-continuity condition 
\eqref{eq:Lipsch_const_F}, and that the
function $a:\R^n\to\R_+$
designing the end-point cost is continuous.
Then, for every $\be>0$ and $\rho>0$ there exists
$\hat u\in \U_\rho$ such that
\begin{equation*}
\Fb_\rho(\hat u) = \inf_{\U_\rho} \Fb_\rho.
\end{equation*}
\end{proposition}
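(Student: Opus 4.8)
The plan is to run the direct method of the calculus of variations on the metric space $(\U_\rho,\tau_w)$, where $\tau_w$ denotes the weak topology of $\U$ restricted to the bounded set $\U_\rho$ (which is metrizable, as recalled in Section~\ref{sec:Gamma}). Two ingredients are needed: the sequential compactness of $\U_\rho$ for $\tau_w$, and the sequential lower semicontinuity of $\Fb_\rho$ for $\tau_w$. Once both are in hand, a minimizer is produced by extracting a weakly convergent subsequence of a minimizing sequence.

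For the first ingredient, observe that $\U_\rho$ is bounded, strongly closed and convex in the Hilbert space $\U$, hence weakly closed; since bounded sets in a Hilbert space are weakly sequentially precompact, every sequence in $\U_\rho$ admits a subsequence converging weakly to a point of $\U_\rho$. For the second ingredient, decompose $\Fb_\rho(u)=\frac12\|u\|_{L^2}^2+\be a(x_u(1))$. The quadratic term is weakly lower semicontinuous, being a multiple of the square of the norm of a Hilbert space. For the end-point term, let $(u_m)_{m\geq1}\subset\U_\rho$ satisfy $u_m\weak_{L^2}u_\infty$; Lemma~\ref{lem:conv_traj} gives $\|x_{u_m}-x_{u_\infty}\|_{C^0}\to 0$, in particular $x_{u_m}(1)\to x_{u_\infty}(1)$, and the continuity of $a:\R^n\to\R_+$ then yields $a(x_{u_m}(1))\to a(x_{u_\infty}(1))$. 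Thus $u\mapsto \be a(x_u(1))$ is in fact weakly continuous on $\U_\rho$, and $\Fb_\rho$, being the sum of a weakly lower semicontinuous functional and a weakly continuous one, is weakly lower semicontinuous.

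To conclude, note that $\inf_{\U_\rho}\Fb_\rho\in[0,+\infty)$ because $\Fb_\rho\geq 0$ and $0\in\U_\rho$. Take a minimizing sequence $(u_m)_{m\geq1}\subset\U_\rho$, pass to a subsequence with $u_m\weak_{L^2}\hat u$ and $\hat u\in\U_\rho$ by the first ingredient, and use the weak lower semicontinuity from the second ingredient to obtain $\Fb_\rho(\hat u)\leq\liminf_{m\to\infty}\Fb_\rho(u_m)=\inf_{\U_\rho}\Fb_\rho$, so that $\hat u$ realizes the infimum. The only point that is not entirely routine is the behaviour of the nonlinear end-point term: on all of $\U$ the map $u\mapsto a(x_u(1))$ need not be weakly continuous, and it is precisely the weak-to-strong continuity of the end-point map on bounded sets of controls, encoded in Lemma~\ref{lem:conv_traj}, that rescues the argument; the remainder is the classical Weierstrass scheme.
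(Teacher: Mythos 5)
Your proof is correct and follows essentially the same route as the paper: the direct method on $(\U_\rho,\tau_w)$, weak sequential compactness of the ball, weak lower semicontinuity of the squared norm, and Lemma~\ref{lem:conv_traj} to convert weak convergence of controls into uniform convergence of trajectories so that the end-point term passes to the limit. The only cosmetic difference is that you spell out the weak closedness of $\U_\rho$ and the lower semicontinuity of each summand a bit more explicitly than the paper does.
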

\begin{proof}
Let us set $\be>0$ and $\rho>0$. If 
we show that
$\Fb_\rho:\U_\rho\to\R_+$ is sequentially coercive
and sequentially lower semi-continuous, then
the thesis will follow from the Direct Method
of calculus of variations (see, e.g., 
\cite[Theorem~1.15]{D93}).
The sequential coercivity is immediate, since
the domain $\U_\rho$ is sequentially compact,
for every $\rho>0$.
Let $(u_m)_{m\geq 1}\subset \U_\rho$ be a sequence
such that $u_m\weak_{L^2} u_\infty$ as $m\to\infty$.
On one hand, in virtue of 
Lemma~\ref{lem:conv_traj}, we have that
\begin{equation} \label{eq:conv_endcost_aux}
\lim_{m\to\infty} a(x_m(1)) = a(x_\infty(1)),
\end{equation}
where for every $m\in \NN\cup \{ \infty \}$
the curve
$x_m:[0,1]\to\R^n$ is the solution of the
Cauchy problem \eqref{eq:ctrl_Cau} corresponding
to the admissible control $u_m$. 
On the other hand, 
the $L^2$-weak convergence implies that
\begin{equation} \label{eq:L2_low_semic}
||u_\infty||_{L^2} \leq \liminf_{m\to\infty}
||u_m||_{L^2}.
\end{equation}
Therefore, combining \eqref{eq:conv_endcost_aux}
and \eqref{eq:L2_low_semic}, we deduce that
the functional
$\Fb_\rho$ is lower semi-continuous. 
\end{proof}

Before proceeding to the main result of the section,
we recall the definition of $\Gamma$-convergence.

\begin{defn} \label{def:G_conv}
The family of functionals
$(\Fb_\rho)_{\beta \in \R_+}$ is said to
 $\Gamma$-converge
to a functional 
$\F_\rho:\U_\rho\to\R_+\cup\{ +\infty \}$
with respect to the weak topology of $\U$ as $\beta
\to+\infty$ if the following conditions hold:
\begin{itemize}
\item for every 
$(u_\beta)_{\beta\in\R_+}\subset \U_\rho$ such that
$u_\beta \weak_{L^2} u$
 as $\beta \to +\infty$ we have
\begin{equation}\label{eq:Gamma_liminf}
\liminf_{\be\to +\infty} \Fb_\rho(u_\be) 
\geq \F_\rho(u);
\end{equation}
\item for every $u\in \U$ there exists a sequence
$(u_\beta)_{\beta\in\R_+}\subset \U_\rho$
called {\it recovery sequence} such that
$u_\beta \weak_{L^2} u$ as $\beta \to +\infty$
 and such that
\begin{equation}\label{eq:Gamma_limsup}
\limsup_{\be\to +\infty} \Fb_\rho(u_\be) \leq \F_\rho(u).
\end{equation}
\end{itemize}
If \eqref{eq:Gamma_liminf} and \eqref{eq:Gamma_limsup}
are satisfied, then we write $\Fb_\rho\to_\Gamma \F_\rho$
as $\beta \to +\infty$.
\end{defn}

\begin{remark} \label{rmk:rec_sequence}
Let us assume that
$\Fb_\rho \to_\Gamma \F_\rho$ as $\be\to\infty$,
and let us consider a non-decreasing sequence
$(\be_m)_{m\geq 1}$ such that 
$\be_m\to+\infty$ as $m\to\infty$. 
For every $u\in \U_\rho$ and for
every sequence 
$(u_{\be_m})_{m\geq 1}\subset \U_\rho$
such that $u_{\be_m}\weak_{L^2} u$
as $m\to\infty$, we have that
\begin{equation}\label{eq:Gamma_liminf_seq}
\F_\rho(u) \leq \liminf_{m\to\infty}
\F^{\be_m}_\rho(u_{\be_m}).
\end{equation}
Indeed, it is sufficient to 
``embed'' the sequence $(u_{\be_m})_{m\geq 1}$
into a sequence $(u_\be)_{\be\in\R_+}$ such that
$u_\be\weak_{L^2} u$ as $\be\to+\infty$, 
and to observe that
\begin{equation*}
\liminf_{\be\to+\infty} \Fb(u_\be)
\leq \liminf_{m\to\infty}
\F^{\be_m}_\rho(u_{\be_m}).
\end{equation*}
Combining the last inequality with the
$\liminf$ condition \eqref{eq:Gamma_liminf}, we
obtain \eqref{eq:Gamma_liminf_seq}.
\end{remark}

Let $a:\R^n \to \R_+$ be the non-negative function
that defines the end-point cost, and let us
assume that the set $D:=\{ x\in \R^n:\,
a(x)=0 \}$ is non-empty. 
Let us define the functional
$\F_\rho:\U_\rho\to \R \cup \{+\infty\}$ as follows:
\begin{equation}\label{eq:def_F}
\F_\rho(u) :=
\begin{cases}
 \frac12 ||u||_{L^2}^2 &\mbox{if } x_u(1)\in D, \\
+\infty &\mbox{otherwise},
\end{cases}
\end{equation}
where $x_u:[0,1]\to\R^n$ is the solution of
\eqref{eq:ctrl_Cau} corresponding to the control
$u$.

\begin{remark} \label{rmk:end_cost_Gamma}
A situation relevant for applications
occurs 
when the set $D$ is reduced to a single point,
i.e., $D=\{ x_1 \}$ with $x_1\in \R^n$.
Indeed, in this case the
minimization of the limiting functional $\F_\rho$   
is equivalent to find a horizontal 
energy-minimizing path that connect 
$x_0$ (i.e., the Cauchy datum of the control
system \eqref{eq:ctrl_Cau}) to $x_1$. This
in turn coincides with the problem of finding a 
sub-Riemannian length-minimizing curve
that connect $x_0$ to $x_1$
(see \cite[Lemma~3.64]{ABB}).
\end{remark}

We now prove the $\Gamma$-convergence result,
i.e., we show that 
$\Fb_\rho\to_\Gamma\F_\rho$ as $\be \to \infty$
with respect to the
weak topology of $\U$. 

\begin{theorem} \label{thm:G_conv}
Let us assume that the vector fields
$F^1,\ldots,F^k$ defining the control system
\eqref{eq:ctrl_Cau} satisfy the
Lipschitz-continuity condition 
\eqref{eq:Lipsch_const_F}, and that the
function $a:\R^n\to\R_+$ designing the end-point 
cost is continuous.
Given $\rho>0$, 
let us consider $\Fb_\rho:\U_\rho\to\R_+$
with $\be>0$. Let 
$\F_\rho:\U_\rho\to\R_+\cup\{+\infty\}$ be defined as in
\eqref{eq:def_F}. Then the functionals 
$(\Fb_\rho)_{\beta \in \R_+}$ 
$\Gamma$-converge to $\F_\rho$ as $\be\to+\infty$ with 
respect to the weak topology of $\U$.
\end{theorem}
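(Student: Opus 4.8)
The plan is to verify the two defining conditions of $\Gamma$-convergence in Definition~\ref{def:G_conv} separately, working with sequences (which is legitimate since the weak topology of $\U$ is first-countable on the bounded set $\U_\rho$). The single non-trivial ingredient is Lemma~\ref{lem:conv_traj}: if $u_\be\weak_{L^2}u$ then the corresponding trajectories converge, $x_{u_\be}\to x_u$ in $C^0$, so that in particular $a(x_{u_\be}(1))\to a(x_u(1))$ by continuity of $a$. Everything else is elementary, and I expect no real obstacle: the penalization structure of $\Fb_\rho$ makes both inequalities almost immediate once this continuity of the end-point evaluation is available.

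For the $\liminf$ inequality \eqref{eq:Gamma_liminf}, I would fix a family $(u_\be)_{\be\in\R_+}\subset\U_\rho$ with $u_\be\weak_{L^2}u$ as $\be\to+\infty$; since $\U_\rho$ is convex and closed it is weakly closed, hence $u\in\U_\rho$. From $a\ge 0$ and $\be>0$ we get $\Fb_\rho(u_\be)\ge\frac12\|u_\be\|_{L^2}^2$, and the weak lower semicontinuity of the $L^2$-norm yields $\liminf_{\be\to+\infty}\frac12\|u_\be\|_{L^2}^2\ge\frac12\|u\|_{L^2}^2$. If $x_u(1)\in D$, then $\F_\rho(u)=\frac12\|u\|_{L^2}^2$ and the bound above already gives the claim. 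If instead $x_u(1)\notin D$, set $c:=a(x_u(1))>0$; by Lemma~\ref{lem:conv_traj} and the continuity of $a$ we have $a(x_{u_\be}(1))\ge c/2$ for all $\be$ large enough, so $\Fb_\rho(u_\be)\ge\be\,c/2\to+\infty=\F_\rho(u)$. This settles \eqref{eq:Gamma_liminf} in both cases.

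For the $\limsup$ inequality \eqref{eq:Gamma_limsup}, given $u\in\U_\rho$ I would take the constant recovery family $u_\be:=u$ for every $\be>0$, which trivially satisfies $u_\be\weak_{L^2}u$. If $x_u(1)\notin D$ there is nothing to prove, since $\F_\rho(u)=+\infty$. If $x_u(1)\in D$, then $a(x_u(1))=0$ and hence $\Fb_\rho(u_\be)=\frac12\|u\|_{L^2}^2=\F_\rho(u)$ for every $\be>0$, so $\limsup_{\be\to+\infty}\Fb_\rho(u_\be)=\F_\rho(u)$. Combining the two parts gives $\Fb_\rho\to_\Gamma\F_\rho$ as $\be\to+\infty$. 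As anticipated, the only place where something substantial enters is the implication $u_\be\weak_{L^2}u\Rightarrow x_{u_\be}(1)\to x_u(1)$, i.e.\ Lemma~\ref{lem:conv_traj}; the constant recovery sequence makes the $\limsup$ part essentially cost-free.
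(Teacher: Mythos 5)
Your proof is correct and follows essentially the same route as the paper: constant recovery family for the $\limsup$, and Lemma~\ref{lem:conv_traj} together with weak lower semicontinuity of the $L^2$-norm for the $\liminf$. The only difference is organizational — you split the $\liminf$ argument into cases according to whether $x_u(1)\in D$, whereas the paper argues contrapositively that a finite $\liminf$ forces $x_u(1)\in D$; the substance is identical.
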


\begin{remark} \label{rmk:controllability}
If $\rho>0$ is not large enough, it may happen that
no control in $\U_\rho$ steers $x_0$ to $D$, i.e.,
$x_u(1) \not \in D$
 for every $u\in\U_\rho$. In this 
case the $\Gamma$-convergence
 result is still valid,
and the $\Gamma$-limit satisfies 
$\F_\rho \equiv +\infty$. 
We can easily avoid this uninteresting
situation when
system \eqref{eq:ctrl_sys} is controllable.
 Indeed,
using the controllability assumption, we deduce that
there exists a control $\tilde u \in \U$ such that
the corresponding trajectory $x_{\tilde u}$ 
satisfies
$x_{\tilde u}(1) \in D$.
On the other hand, we have 
that 
\[
\inf_{u\in \U} \Fb(u) \leq \Fb(\tilde u) 
\]
for every $\beta>0$. Moreover, using the fact that
$x_{\tilde u}(1) \in D$
and recalling the definition of
$\Fb$ in \eqref{eq:cost}, we have that
\[
\Fb(\tilde u) = \frac12 
||\tilde u||^2_{L^2}
\]
for every $\beta>0$. The fact that
the end-point cost $a:\R^n\to\R_+$ 
is non-negative implies that 
$\Fb(u)> \Fb(\tilde u)$
whenever $||u||_{L^2} > ||\tilde u||_{L^2}$.
Setting $\rho = ||\tilde u||_{L^2}$, we deduce that
\[
\inf_{u\in \U} \Fb(u) = \inf_{u\in \U_\rho}\Fb_\rho(u).
\]
Moreover, this choice of $\rho$ guarantees that
the $\Gamma$-limit $\F_\rho \not \equiv +\infty$,
since we have that $\F_\rho (\tilde u) < +\infty$.
\end{remark}

\begin{proof}[Proof of Theorem~\ref{thm:G_conv}]
We begin with the $\limsup$ condition
\eqref{eq:Gamma_limsup}. If 
$\F_\rho(u)=+\infty$, the inequality is trivially 
satisfied. Let us assume that $\F_\rho(u)<+\infty$.
Then setting $u_\be = u$ for every $\be>0$,
we deduce that
$x_u(1) = x_{u_\be}(1)\in D$,
where $x_u:[0,1]\to\R^n$ is the solution of the
Cauchy problem \eqref{eq:ctrl_Cau} corresponding
to the control $u$.
Recalling that $a|_D\equiv 0$, we have that
\begin{equation*} 
\Fb_\rho(u_\be) = \frac12 ||u||_{L^2}^2 = \F_\rho(u)
\end{equation*}
for every $\be>0$.
This proves the $\limsup$ condition.

We now prove the $\liminf$ condition 
\eqref{eq:Gamma_liminf}.
Let us consider $( u_\be)_{\be\in\R_+}
\subset \U_\rho$ such
that $u_\be \weak_{L^2} u$ as $\be \to \infty$, and
such that
\begin{equation} \label{eq:liminf_eq}
\liminf_{\be \to +\infty} \Fb_\rho(u_\be) = C.
\end{equation}
We may assume that $C<+\infty$. If
this is not the case, then \eqref{eq:Gamma_liminf}
trivially holds.
Let us extract $( \be_m )_{m\geq0}$ such that
$\be_m \to +\infty$ and
\begin{equation} \label{eq:subseq_liminf}
\lim_{m\to \infty} \F_\rho^{\be_m}(u_{\be_m}) =
  \liminf_{\be \to +\infty} \Fb_\rho(u_\be) = C.
\end{equation}
For every $m\geq0$,
let $x_{\be_m}:[0,1]\to \R^n$ be the curve defined
as the solution of the Cauchy problem 
\eqref{eq:ctrl_Cau} corresponding to the
control $u_{\be_m}$, and let
$x_u:[0,1]\to\R^n$ be the solution corresponding
to $u$.
Using Lemma~\ref{lem:conv_traj}, we deduce that
$x_{\beta_m}\to_{C^0}x_u$ as $m\to\infty$. 
In particular, we obtain that 
$x_{\be_m}(1)\to x_u(1)$ as $m\to \infty$.
On the other hand, the limit in 
\eqref{eq:subseq_liminf} implies that there 
exists $\bar m\in\NN$ such that
\[
\beta_m a(x_{\be_m}(1)) \leq \F^{\be_m}_\rho
(u_{\be_m}) \leq C + 1,
\]
for every $m\geq \bar m$. Recalling that
$\be_m\to \infty$ as $m\to\infty$, 
the previous inequality yields
\[
a(x_u(1))=\lim_{m\to\infty}a(x_{\be_m}(1))=0,
\]
i.e., that $x_u(1)\in D$.
This argument proves that, if 
$u_\beta \weak_{L^2} u$ as $\beta \to\infty$
and if the quantity at the
right-hand side of \eqref{eq:liminf_eq}
is finite, then the limiting control $u$ steers
$x_0$ to $D$. In particular, this shows
that $\F_\rho(u)<+\infty$.
Finally, in order to establish 
\eqref{eq:Gamma_liminf}, we observe that
\[
\F_\rho(u) = \frac12 ||u||_{L^2}^2 \leq \liminf_{n\to \infty}
\frac12 || u_{\be_n}||_{L^2}^2\leq \liminf_{n\to \infty}
\F_\rho^{\be_n}(u_{\be_n}) = \liminf_{\be \to +\infty}
\F_\rho^\be(u_\be). 
\]
\end{proof}

The theorem that we present below 
motivates the interest in the $\Gamma$-convergence
result just established.
Indeed, we can investigate the asymptotic  
the behavior of
$(\inf_{\U_\rho}\Fb_\rho)_{\be\in\R_+}$ 
as $\be\to+\infty$. Moreover, it turns out that
the minimizers of $\Fb_\rho$ provide 
approximations of the minimizers of 
the limiting functional $\F_\rho$, 
with respect to the {\it strong topology}
of $L^2$.
The first part of Theorem~\ref{thm:fund_cons}
 holds for every 
$\Gamma$-convergent sequence of
equi-coercive functionals 
(see, e.g., 
\cite[Corollary 7.20]{D93}).
On the other hand, the conclusion of the second
part relies on the particular structure of
$(\Fb)_{\be\in\R_+}$. 

\begin{theorem}  \label{thm:fund_cons}
Under the same assumptions of 
Theorem~\ref{thm:G_conv},
given $\rho>0$ we have that
\begin{equation} \label{eq:inf_seq}
\lim_{\be\to\infty}\, \inf_{\U_\rho}\Fb_\rho =
\inf_{\U_\rho}\F_\rho.
\end{equation}
Moreover, under the further assumption
 that $\F_\rho\not \equiv
+\infty$,
for every $\be>0$ let $\hat u_\be$ 
be a minimizer of $\Fb_\rho$.
Then, for every non-decreasing sequence
$(\be_m)_{m\geq 1}$
such that $\be_m\to +\infty$ as 
$m\to\infty$, $(\hat u_{\be_m})_{m\geq 1}$ is
pre-compact with respect to the strong topology
of $\U_\rho$, and every limiting point
of $(\hat u_{\be_m})_{m\geq 1}$ is a minimizer
of $\F_\rho$.
\end{theorem}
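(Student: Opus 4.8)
The plan is to read the statement as the ``fundamental theorem of $\Gamma$-convergence'' (which yields \eqref{eq:inf_seq}) together with an extra compactness argument that exploits the particular form \eqref{eq:cost} of $\Fb$; throughout, the $\Gamma$-convergence $\Fb_\rho\to_\Gamma\F_\rho$ is available from Theorem~\ref{thm:G_conv}. First I would note that the family $(\Fb_\rho)_{\be\in\R_+}$ is trivially equi-coercive for the weak topology of $\U$, since all these functionals share the weakly sequentially compact domain $\U_\rho$; then \eqref{eq:inf_seq} follows from \cite[Corollary~7.20]{D93}, but I would rather derive it directly. For $\limsup_{\be\to+\infty}\inf_{\U_\rho}\Fb_\rho\leq\inf_{\U_\rho}\F_\rho$ I would fix $u\in\U_\rho$, take a recovery sequence $(u_\be)_{\be\in\R_+}$ as in \eqref{eq:Gamma_limsup}, use $\inf_{\U_\rho}\Fb_\rho\leq\Fb_\rho(u_\be)$ and \eqref{eq:Gamma_limsup}, and then pass to the infimum over $u\in\U_\rho$. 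For the reverse inequality I would extract a non-decreasing sequence $(\be_m)_{m\geq1}$ with $\be_m\to+\infty$ along which $\inf_{\U_\rho}\F^{\be_m}_\rho$ tends to $\liminf_{\be\to+\infty}\inf_{\U_\rho}\Fb_\rho$, pick by Proposition~\ref{prop:dir_met} a minimizer $\hat u_{\be_m}$ of $\F^{\be_m}_\rho$ for each $m$, extract a weakly convergent subsequence $\hat u_{\be_m}\weak_{L^2}\bar u\in\U_\rho$, and apply the sequential $\liminf$ inequality \eqref{eq:Gamma_liminf_seq} of Remark~\ref{rmk:rec_sequence} to obtain $\inf_{\U_\rho}\F_\rho\leq\F_\rho(\bar u)\leq\liminf_{m\to\infty}\F^{\be_m}_\rho(\hat u_{\be_m})=\liminf_{\be\to+\infty}\inf_{\U_\rho}\Fb_\rho$. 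The two inequalities give both the existence of the limit and \eqref{eq:inf_seq}.

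For the second part, I would assume $\F_\rho\not\equiv+\infty$, so that $\inf_{\U_\rho}\F_\rho<+\infty$, and let $(\be_m)_{m\geq1}$ be non-decreasing with $\be_m\to+\infty$. By Proposition~\ref{prop:dir_met} each $\F^{\be_m}_\rho$ admits a minimizer $\hat u_{\be_m}$, and $||\hat u_{\be_m}||_{L^2}\leq\rho$; hence some subsequence satisfies $\hat u_{\be_m}\weak_{L^2}\bar u$ for some $\bar u\in\U_\rho$. Combining \eqref{eq:Gamma_liminf_seq} with the first part,
\[
\F_\rho(\bar u)\leq\liminf_{m\to\infty}\F^{\be_m}_\rho(\hat u_{\be_m})=\liminf_{m\to\infty}\inf_{\U_\rho}\F^{\be_m}_\rho=\inf_{\U_\rho}\F_\rho,
\]
so $\bar u$ minimizes $\F_\rho$; in particular $x_{\bar u}(1)\in D$ and $\F_\rho(\bar u)=\frac12||\bar u||_{L^2}^2$.

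Then comes the point where the structure of $\Fb$ is essential. Since the end-point term in \eqref{eq:cost} is non-negative, $\frac12||\hat u_{\be_m}||_{L^2}^2\leq\F^{\be_m}_\rho(\hat u_{\be_m})=\inf_{\U_\rho}\F^{\be_m}_\rho$, and by the first part the right-hand side tends to $\inf_{\U_\rho}\F_\rho=\frac12||\bar u||_{L^2}^2$; hence $\limsup_{m\to\infty}||\hat u_{\be_m}||_{L^2}\leq||\bar u||_{L^2}$, while the weak lower semicontinuity of the norm gives the reverse bound, so $||\hat u_{\be_m}||_{L^2}\to||\bar u||_{L^2}$. Since $\U$ is a Hilbert space, weak convergence together with convergence of the norms yields strong convergence, that is $\hat u_{\be_m}\to\bar u$ in $\U_\rho$. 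The same reasoning applies to any subsequence of the original minimizing sequence (again a sequence of minimizers for parameters tending to $+\infty$), so $(\hat u_{\be_m})_{m\geq1}$ is pre-compact in the strong topology of $\U_\rho$; and since every strongly convergent subsequence also converges weakly, the argument above shows that each limiting point of $(\hat u_{\be_m})_{m\geq1}$ minimizes $\F_\rho$.

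The first identity \eqref{eq:inf_seq} is routine $\Gamma$-convergence formalism and I expect no difficulty there. The delicate step is the passage, in the second part, from weak to strong pre-compactness of the minimizers: equi-coercivity alone only yields weak pre-compactness, and the upgrade relies on $\Fb$ being the sum of the convex quadratic $\frac12||\cdot||_{L^2}^2$ and a non-negative penalization, which forces the $L^2$-norms of the minimizers to converge once the minimal values converge, after which the Hilbert space structure closes the argument. I would also be careful that the minimizers $\hat u_{\be_m}$ are genuinely available at finite $\be$, which is exactly the content of Proposition~\ref{prop:dir_met}.
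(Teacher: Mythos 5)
Your proposal is correct and follows essentially the same route as the paper: both prove the identity on the infima via the standard $\Gamma$-convergence argument (extract a minimizing sequence of $\F^{\be_m}_\rho$-minimizers, pass to a weakly convergent subsequence, combine the sequential $\liminf$ bound of Remark~\ref{rmk:rec_sequence} with a recovery-sequence $\limsup$ bound), and both upgrade weak to strong pre-compactness of the minimizers by exploiting the non-negativity of the end-point cost to force $\frac12\|\hat u_{\be_m}\|_{L^2}^2 \to \frac12\|\bar u\|_{L^2}^2$ and then invoking the Hilbert-space fact that weak convergence plus convergence of norms yields strong convergence. The only cosmetic difference is that you split the first part into explicit $\limsup$ and $\liminf$ inequalities, whereas the paper produces a minimizer $\hat u_\infty$ and derives the limit identity from it, and your notation briefly keeps the index $m$ where the argument is really running along an extracted subsequence; neither affects correctness.
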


\begin{proof}
For every $\be>0$ let $\hat u_\be$ 
be a minimizer of $\Fb_\rho$, that exists
in virtue of Proposition~\ref{prop:dir_met}. 
Let us consider a  non-decreasing sequence
$(\be_m)_{m\geq 1}$
such that $\be_m\to +\infty$ as $m\to\infty$
and such that
\begin{equation} \label{eq:lim_inf_aux}
\lim_{m\to\infty} \F^{\be_m}_\rho(\hat u_{\be_m})
= \lim_{m\to\infty} 
\, \inf_{U_\rho}\F^{\be_m}_\rho
= \liminf_{\be\to+\infty} \, \inf_{U_\rho} \Fb_\rho.
\end{equation}
Recalling that
$(\hat u_{\be_m})_{m\geq1}\subset \U_\rho$,
we have that there exists
$\hat u_\infty\in \U_\rho$ and
a sub-sequence
$(\be_{m_j})_{j\geq1}$ such that
$\hat u_{\be_{m_j}}\weak_{L^2} \hat u_\infty$
as $j\to\infty$.
Since $\Fb_\rho\to_\Gamma\F_\rho$ as
$\be\to+\infty$, 
the inequality \eqref{eq:Gamma_liminf_seq}
derived in Remark~\ref{rmk:rec_sequence}
implies that
\begin{equation} \label{eq:lim_inf_aux_2}
\F_\rho(\hat u_\infty) \leq 
\lim_{j\to\infty} \F^{\be_{m_j}}_\rho(u_{\be_{m_j}})
= \liminf_{\be\to+\infty} \, \inf_{U_\rho} \Fb_\rho,
\end{equation}
where we used \eqref{eq:lim_inf_aux} in the 
last identity.
On the other hand, 
for every $u\in\U_\rho$
let $(u_\be)_{\be\in\R_+}$
be a recovery sequence for $u$, i.e.,
a sequence that satisfies the $\limsup$ condition
\eqref{eq:Gamma_limsup}.
Therefore, we have that 
\begin{equation} \label{eq:lim_sup_aux}
\F_\rho (u) \geq
\limsup_{\be\to+\infty}\Fb_\rho(u_\be)
\geq \limsup_{\be\to+\infty}
\, \inf_{\U_\rho}\Fb_\rho.
\end{equation}
From \eqref{eq:lim_inf_aux_2} and
\eqref{eq:lim_sup_aux}, we deduce that
\begin{equation*}
\F_\rho(u) \geq \F_\rho(\hat u_\infty)
\end{equation*}
for every $u\in \U_\rho$, i.e., 
\begin{equation} \label{eq:minimiz_G_lim}
\F_\rho(\hat u_\infty) = \inf_{\U_\rho} \F_\rho.
\end{equation}
Finally, setting $u=\hat u_\infty$ in 
\eqref{eq:lim_sup_aux}, we obtain
\begin{equation} \label{eq:limit_inf_aux}
\F_\rho (\hat u_\infty) =
\lim_{\be\to\infty} \, \inf_{\U_\rho}\Fb_\rho.
\end{equation}
From \eqref{eq:minimiz_G_lim} and
\eqref{eq:limit_inf_aux}, it follows that
\eqref{eq:inf_seq} holds.

We now focus on the second part of the thesis.
For every $\be>0$ let $\hat u_\be$ 
be a minimizer of $\Fb_\rho$, as before. 
Let $(\be_m )_{m\geq1}$ be a non-decreasing sequence 
such that $\be_m\to+\infty$ as $m\to\infty$,
and let us consider $(\hat u_{\be_m})_{m\geq1}$.
Since $(\hat u_{\be_m})_{m\geq1}$ is 
$L^2$-weakly pre-compact, there exists 
$\hat u\in \U_\rho$ and a sub-sequence 
$(\hat u_{\be_{m_j}})_{j\geq1}$
 such that
$\hat u_{\be_{m_j}}\weak_{L^2}\hat u$
as $j\to\infty$. From the first part of the thesis
it descends that $\hat u$ is a minimizer 
of $\F_\rho$.
Indeed, in virtue of \eqref{eq:Gamma_liminf_seq},
we have that
\begin{equation*}
\F_\rho(\hat u) \leq 
\liminf_{j\to\infty} 
\F_\rho^{\be_{m_j}}(\hat u_{\be_{m_j}}) 
= \lim_{j\to\infty}\, \inf_{\U_\rho}\F_\rho^{\be_{m_j}} = \inf_{\U_\rho}\F_\rho,
\end{equation*}
where we used $
\F_\rho^{\be_{m_j}}(\hat u_{\be_{m_j}})=\inf_{\U_\rho}\F_\rho^{\be_{m_j}}$ 
and the identity \eqref{eq:inf_seq}.
The previous relation guarantees that
\begin{equation} \label{eq:strong_conv_aux1}
\F_\rho(\hat u) = \inf_{\U_\rho}\F_\rho,
 = \lim_{j\to\infty}
\F_\rho^{\be_{m_j}}(\hat u_{\be_{m_j}}).
\end{equation}
 To conclude we have to show that
\begin{equation} \label{eq:strong_conv_ths}
\lim_{j\to\infty}||\hat u_{\be_{m_j}}-
\hat u||_{L^2} = 0.
\end{equation}
Using the assumption $\F_\rho \not \equiv +\infty$,
from the minimality of $\hat u$
we deduce that $\F_\rho(\hat u) = 
\frac12||\hat u||_{L^2}^2$.
Hence, \eqref{eq:strong_conv_aux1}
implies that
\begin{equation} \label{eq:strong_conv_aux2}
\frac12 ||\hat u||_{L^2}^2 
 = \lim_{j\to\infty}
\F^{\be_{m_j}}_\rho(\hat u_{\be_{m_j}})
\geq 
\limsup_{j\to\infty} \frac12 ||u_{\be_{m_j}}||_{L^2}^2,
\end{equation}
where we used that 
$\Fb_\rho(u)\geq \frac12 ||u||_{L^2}^2$
for every $\be>0$ and
for every $u\in \U_\rho$.
From \eqref{eq:strong_conv_aux2} and from the 
weak convergence $\hat u_{\be_{m_j}}\weak_{L^2}
\hat u$ as $j\to\infty$, we deduce that 
\eqref{eq:strong_conv_ths} holds. 
\end{proof}

\end{section}

\section*{Conclusions}
In this paper we have considered an optimal control
problem in a typical framework of sub-Riemannian
geometry. In particular, we have studied 
the functional given by the weighted sum
of the energy of the admissible trajectory 
(i.e., the squared $2$-norm of the control)
and of an end-point cost. \\
We have written the gradient flow
induced by the functional on the Hilbert space
of admissible controls. 
We have proved that,
when the data of the problem are
real-analytic, the gradient flow trajectories
converge to stationary points of the functional
as soon as the starting point has Sobolev
regularity.  \\
The $\Gamma$-convergence result bridges
the functional
considered in the first part of the paper
with the problem of joining two assigned points with 
an admissible length-minimizer path. This fact may 
be of interest for designing methods to approximate
sub-Riemannian length-minimizers. 
Indeed, a natural approach could be to project  
the gradient flow onto a proper finite-dimensional
subspace of the space of admissible controls, and
to minimize the weighted functional restricted
to this subspace.
We leave further development of these ideas for 
future work.

\end{document}